\documentclass[leqno,12pt]{article} 
\usepackage{amsmath, amssymb}

\usepackage{amsthm} 
\def\N{{{\mathbb N}}}
\def\A{{{\mathbb A}}}

\def\Z{{{\mathbb Z}}}
\def\Q{{{\mathbb Q}}}
\def\R{{{\mathbb R}}}
\def\C{{{\mathbb C}}}
\def\P{{{\mathbb P}}}
\def\G{{{\mathbb G}}}

\def\O{{{\mathcal O}}}
\def\1C{{{\mathcal C}}}

\theoremstyle{plain} 
\newtheorem{theorem}{\indent\sc Theorem}[section]
\newtheorem{lemma}[theorem]{\indent\sc Lemma}
\newtheorem{corollary}[theorem]{\indent\sc Corollary}
\newtheorem{proposition}[theorem]{\indent\sc Proposition}

\theoremstyle{definition} 

\newtheorem{remark}[theorem]{\indent\sc Remark}
\newtheorem{example}[theorem]{\indent\sc Example}

\makeatletter
\def\address#1#2{\begingroup
\noindent\parbox[t]{7.8cm}{%
\small{\scshape\ignorespaces#1}\par\vskip1ex
\noindent\small{\itshape E-mail address}%
\/: #2\par\vskip4ex}\hfill%
\endgroup}%
\makeatother
\author{Pietro Corvaja,  Umberto Zannier}
\title{\uppercase{Rational and integral values of rational functions at rational points}}
\date{\today}
\begin{document}

\maketitle

\centerline{With an appendix by David Masser}

\begin{abstract}
 
 The basic issue of this paper concerns  {\it sets of values of rational functions at  rational points of  an  algebraic  variety}, namely images $f(X(k))$, where $X$ is an  algebraic variety and  $f:X\to \P_1$ is a rational map defined over the number field $k$ (and where we consider only the points in the domain of $f$).  
 
 We shall note that this problem reflects a multitude of different aspects, sometimes embracing previously studied questions and sometimes leading to new issues. 
 
For instance, we shall prove that {\it  for $X$ an abelian variety, the map between rational points is never surjective}. This is reminiscent of the Hilbert Property, but here the fibers may have arbitrary dimension. One of our examples concerns the classical Hilbert Property:  we produce {\it a simply connected affine surface whose set of integral points (over $\Z$)  is  Zariski-dense  but thin}, disproving a plausible expectation.

The general question turns out to be difficult already in the case $X=\P_2$, where   new open diophantine issues appear. In this context, we shall discuss a few cases concerning rational functions of low degree.

We shall also discuss heights and integrality issues; in this context, a role will be played by `gcd estimates'. 
  In the first Appendix, written by D. Masser, an effective estimate of some relevant gcd is provided.

 In the last part of the paper, we treat the issue of common values of rational or regular functions on varieties. We show for instance that two regular functions defined on a quasi projective surface can share all their (infinitely many) prime values without sharing  their whole value sets.
 
Many  problems we consider, although of very different origin, ultimately boil down to questions about integral points on cubic surfaces, or on del Pezzo surfaces of higher degree .
\end{abstract}

\section{Introduction} \label{S.Int} 

In this paper we will be basically concerned with the  {\it sets of values of rational functions at  rational points of  an  algebraic  variety}, namely images $f(X(k))$, where $X$ is an  algebraic variety and  $f:X\to \P_1$ is a rational map defined over $k$.  

Here $k$ is a number field, and we may think of it as being $\Q$; for many results this will make no essential difference, though  some examples will be treated only over $\Q$ or over $\Z$, which may be special in some respects. This difference will reflect in the organization of the paper.

\smallskip

{\tt Definition}. Throughout the paper, when the rational map $f:X\to \P_1$ is not everywhere regular, by $f(X(k))$ we mean the image of the set of rational points where $f$ is well-defined.

\smallskip

 A typical issue will be to understand whether the induced map $X(k)\to \P_1(k)$  between sets of $k$-rational points might be surjective, or ``almost'' surjective.
\smallskip

$\bullet$ When the variety $X$ is a curve, these questions enter the realm of the so-called Hilbert Irreducibility theory; although this theory will emerge later, here we shall be primarily concerned with the case $\dim X\geq 2$: hence we are concerned with the question of {\it rational points on fibrations}.  This issue had been considered in our book \cite{CZ-libro}, see Theorem 3.47. 
\footnote{More recently, G. Bresciani in \cite{B} proved some conditional result, depending on a strong uniform version of  Lang's conjecture on rational points on varieties of general type, in the case when the generic fibre of the projection $X\to \P_1$ are varieties of general type.}
When the total space $X$ is an abelian variety, we shall prove in particular that

\smallskip

 {\it A rational map $f:X\to \P_1$ is never surjective between rational points}. 
 
 \smallskip
 
 Note that there are situations where a  rational map  $f: X\to Y$, with $X$ possibly of greater dimension then $Y$, is surjective on rational points, even in absense of a rational section defined over the given number field of definition  ({\it see} e.g. Example \ref{ex.new} below).
 \smallskip

 \medskip

Here is a brief list of other topics touched in the paper:
 
 \smallskip

$\bullet$ Among our constructions, we shall produce fibrations in curves over the line (so that the total space will be a surface) such that infinitely many fibers admit exactly one rational point (avoiding trivial situations e.g. when there are base points). 

\smallskip

$\bullet$ We shall also be interested in values at {\it integral} points or in {\it integral values} at rational or integral points, of rational functions on $\P_n$ . 
Already for $n=2$ subtle problems arise. Let us see at once  a few  simple examples of this issue; see Section  \S \ref{S.rationalmaps} below for a more complete analysis and other examples.

- Consider the rational function in two variables 
$
f(x,y)=(x^2-1)/y^2,
$
 which can be viewed as a rational map $\A^2\to\A^1$. It turns  out that its integral values at integral points 
 are precisely the numbers $0,-1$ and all the positive integers which are not perfect squares.  

- For  the  similar rational function 
$
g(x,y)=(x^2+1)/y^2,
$ 
we shall observe that already the description is more recondite. 

- Consider now the function 
$ 
(x^2-1)/(y^2-1). 
$
 The set of its integral values at integral points includes  the set of positive non-square integers, but a precise description is somewhat surprising: see Example \ref{EX.0} below in Section \S \ref{S.rationalmaps}. 


$\bullet$ In Section \S \ref{S.rationalmaps} we shall consider other more elaborate examples. Let us notice at once that the last example with quadratic polynomials  is somewhat related  to a famous Olympiad problem (IMO 1988, Problem 6; see \cite{DJMP}), which (somewhat on the contrary) asked to  prove that all the integral values of the rational function
$$
\frac{x^2+y^2}{xy+1}
$$
at positive integer points {\it are} perfect squares. In Section \ref{S.simply-connected-surface} we shall show how the solution to this Olympiad problem leads to a {\it counter-example around the Hilbert property for certain simply connected surfaces}. 

\smallskip

Other divisibility issues, albeit of a more sophisticated nature, arose recently in the classification of cluster algebras and the correlated enumerations of friezes; see for instance the recent paper by  {\sc R. Zhang} \cite{Zh}, who considered in particular divisibility problems boiling down to integral points on cubic surfaces (special cases of which go back to {\sc Jacobsthal} and {\sc Mordell}). 

\smallskip


\smallskip

$\bullet$  In the last part of the paper, we shall study also {\it common values} of rational functions: namely, given {\it two} rational functions $f,g$ on a same variety, we shall investigate cases when  these functions share infinitely many values at rational (or integral) points, or even admit the same value set on such points. 
 
 We shall see that some natural problems in arithmetic or geometry can be phrased in terms of intersections (or comparison) of value  sets of functions at rational points; in turn, these questions lead to the study of distributions of rational or integral points on certain varieties, which had been extensively studied in classical Diophantine geometry. An example is represented by the investigation of rational and integral points on the Cayley cubic surface, which will be tackled while studying common values of homogeneous functions of degree one on the plane. Other examples arise in work of {\sc Zieve} and {\sc Carney-Horstch-Zieve}, however still in progress; they study the problem {\it whether given any number field $k$ there is a constant $c(k)$ such that for every nonconstant rational function $f$ on $\P_1$   such that every point outside a finite set $Z_f\subset\P_1(k)$ has at most $c(k)$ rational pre-images.}. And {\sc Zieve} raised  analogous issues over varieties other than $\P_1$.

\medskip

Let us introduce now some  formal setting.

\medskip

{\tt Value sets}. Let us fix a number field $k$. Given a quasi-projective algebraic variety $X$ over $k$ and a rational  map  $X\rightarrow \P_1$ we say that the set $f(X(k))$ is a {\it value set}. 

More generally, one can consider value sets in an arbitrary algebraic variety $Y$, provided by a rational map $X\to Y$, but the case of $\P_1$ will be basic here.

\medskip

As to $X$,  we do not  generally assume it to be irreducible or smooth. 

For our purposes, we may replace $X$ with the Zariski-closure of its set of rational points, so we may suppose from the outset that $X(k)$ is Zariski-dense.  In many cases, $X$ will be a semi-abelian variety.

\medskip

We note at once that every finite set is a value  set (a proof of this simple fact will be given later, since we do not know any precise reference) and that finite unions of value sets as well as finite intersections of value sets are value sets. 

\medskip

Strictly connected with the notion of value set is the important one of {\it thin set} (put forward by {\sc Serre}): in dimension $1$, a thin set in $\P_1(k)$ is a finite union of value sets $f(X(k))$ where we assume that $X$ has dimension $1$ and the rational map $f$ admits no rational section (in other words, the restriction of $f$ to each irreducible component of $X$ has degree $\geq 2$).  A similar definition holds over any irreducible variety $Y/k$ in place of $\P_1$, where we take rational maps $f:X\to Y$  from a variety $X$ of the same dimension as $Y$. 

 {\it Hilbert Irreducibility Theorem}  asserts that for $Y=\P_n$  the complement of thin sets is always Zariski-dense.

\smallskip

As we mentioned, many classical problems in Number Theory can be reduced to questions about value sets; recently, some  applications of the theory of value sets  to linear groups has been developped in \cite{CDRRZ}, \cite{CDRRZ2}; this concerns the case when $X$ is a linear torus, i.e. an affine semi-abelian variety.

\smallskip

\smallskip

As we already said, we shall look also at the analogous question for integral points.  

Values sets in $\A^1(\Z)$, defined as images of sets of  integral points on algebraic varieties, are called `Diophantine sets' in mathematical logic, and their study is at the base of the solution of Hilbert X problem about decidability of the existence of solutions for a general polynomial Diophantine equation. See e.g. the seminal paper by J. Robinson \cite{Rob}.

\medskip

\subsection{ Our main results}  Let $k$ be a number field, $X$ an algebraic variety and $f\in k(X)$ a rational function, all defined over $k$. 

 As alluded above, a prototype issue  for us concerns  for instance  {\it how large the  image set $f(X(k))$ is inside $\P_1(k)$}. In particular, we ask:  {\it when is this image the whole of (or a cofinite subset in) $\P_1(k)$?} By the Hilbert Irreducibility Theorem (which will be discussed in more  detail in subsection \ref{SS.Hilbert}) this cannot happen when $X$ is a curve, unless the map $f$ admits a rational section.
 
 A first result of this paper asserts that   this is never achieved also when $X$ is a semi-abelian variety of any dimension:
 
\begin{theorem}\label{T.Main}   For $i=1,\ldots , m$, let $X_i/k$ be a semiabelian variety with a rational map  $f_i:X_i\to \P_1$,    defined over the number field $k$. Let further $\Gamma_i$ be a finitely generated subgroup of $X_i(k)$. Then the complement $\P_1(k)-\bigcup_{i=1}^mf_i(\Gamma_i)$  contains infinitely many rational integers. In particular,   the complement $\P_1(k)-\bigcup_{i=1}^mf_i(\Gamma_i)$   is Zariski-dense.

\end{theorem}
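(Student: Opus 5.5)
The plan is to reduce, via Vojta's theorem on integral points, to Laurent polynomials on finitely generated groups of torus points, and then show the attained integers are too sparse to fill $\Z$. It suffices to show that the set of rational integers $t$ which lie in $\bigcup_i f_i(\Gamma_i)$ is very sparse: of size $O((\log T)^{c})$ among $|t|\le T$. Then infinitely many integers are missed, and an infinite subset of $\P_1(k)$ is automatically Zariski-dense. We may treat each $i$ separately, since a finite union of sparse sets is sparse; so we fix a single semiabelian $X$, a rational map $f$, and a finitely generated subgroup $\Gamma$. We fix a finite set $S$ of places of $k$, containing the archimedean ones, such that $X$, $f$ and $\Gamma$ have good $S$-integral models and $\Gamma\subset X(\O_S)$.

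\emph{Step 1 (Vojta).} Let $D$ be the polar divisor of $f$ (taken in a suitable compactification). If $\gamma\in\Gamma$ satisfies $f(\gamma)\in\Z$, then $\gamma$ is an $S$-integral point of the semiabelian variety minus the divisor $D$. By Vojta's theorem (the semiabelian generalization of Faltings), the set of such points lies in a finite union of translates $x_j+B_j$ of semiabelian subvarieties $B_j$ with $x_j+B_j$ disjoint from $D$. Consequently $f$ restricts to a regular function $x_j+B_j\to\A^1$. A regular function on a semiabelian variety is constant along the abelian quotient directions, so it factors through the toric part: after translating, it is a Laurent polynomial $P_j$ on a torus $\G_m^{r}$. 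Moreover $\Gamma\cap(x_j+B_j)$ maps to a finitely generated subgroup coset of $(k^*)^r$, with entries $S$-units after enlarging $S$.

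\emph{Step 2 (counting values of Laurent polynomials on $S$-unit groups).} We are reduced to showing the following. For a Laurent polynomial $P=\sum_{m} c_m u^m$ and a finitely generated group $\Lambda\subset(\O_S^*)^r$, the number of $\lambda\in\Lambda$ with $P(\lambda)\in\Z$ and $|P(\lambda)|\le T$ is $O((\log T)^{c})$. Write $t=P(\lambda)$ as an $S$-unit equation $\sum_m c_m\lambda^m - t=0$. We apply the Subspace Theorem in the form of Evertse's theorem on unit equations with a bounded extra term, or directly to the linear forms. For solutions with no vanishing subsum, it gives $h(\lambda^m)\ll h(t)+1\ll \log T$. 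The points of $\Lambda$ with height $\le C\log T$ number $O((\log T)^{\mathrm{rank}\Lambda})$, giving the bound. If some subsum vanishes, the solution lies in a finite union of cosets of proper subgroups on which $P$ becomes a Laurent polynomial with fewer monomials. We then argue by induction on the number of monomials. The constant monomial case is trivial, since it yields a finite set of values.

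\emph{Step 3 (conclusion).} Summing over $j$ and over $i$, at most $O((\log T)^c)$ integers in $[-T,T]$ are attained. Hence infinitely many rational integers lie outside $\bigcup_i f_i(\Gamma_i)$, and this set is Zariski-dense in $\P_1$.

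The main obstacle I expect is Step 2 in the degenerate cases. There, subsums can vanish along subgroup cosets, and one must check that the induction really preserves the logarithmic counting rather than only finiteness. One must also track carefully the reduction in Step 1 for translates whose toric part is trivial, where $f$ is constant and contributes only finitely many values. The first thing I would try is to phrase everything via the Evertse--Schlickewei--Schmidt-type structure theorem for solutions of $\sum a_m\lambda^m=t$ in $\Lambda$.
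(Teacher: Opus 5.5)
Your proof has a genuine gap in the first sentence of Step 1. The claim that $f(\gamma)\in\Z$ makes $\gamma$ an $S$-integral point of $X\setminus D$ is false as soon as $f$ has points of indeterminacy, i.e.\ as soon as the closures of its zero and polar divisors meet. This is the typical situation when $\dim X\ge 2$.

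Near such a point, numerator and denominator of $f$ can be simultaneously $v$-adically small and cancel. Then $f(\gamma)$ is $v$-integral even though $\gamma$ reduces into $D$ modulo $v$. Integrality of $f(\gamma)$ only gives integrality, with respect to $\tilde f^*(\infty)$, of the lift of $\gamma$ to a blow-up $\tilde X$ resolving $f$. But $\tilde X\setminus\tilde f^*(\infty)$ is not a semiabelian variety minus a divisor, so Vojta's theorem does not apply.

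Controlling this cancellation is a gcd problem. For tori it can be handled by Subspace-Theorem gcd estimates (Theorem \ref{T.lower-bound-heights-1} and Levin's extension), so even there you would need that input rather than Vojta's theorem. For abelian varieties it is open. Concretely, take $X=E\times E$ and $f(P,Q)=x(P)+x(Q)$, which is undefined at $(O,O)$. This value is an integer when $d(P)=d(Q)=d$ and $d^2$ divides the sum of the numerators; then $(P,Q)$ reduces to $(O,O)\in D$ at every prime dividing $d$. Since $D=\{O\}\times E+E\times\{O\}$ is ample, your Step 1 plus Faltings would show that only finitely many integers have the form $x(P)+x(Q)$. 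The paper (Example \ref{Ex:somme}, Section \ref{S.H}) presents this only as a consequence of Vojta's conjecture, known unconditionally in special cases such as rank one.

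Likewise, your bound $O((\log T)^c)$ amounts to a lower bound $h(f(\gamma))\gg h(\gamma)$ on $\Gamma$. The paper calls this subtle and mostly unknown; compare the far weaker Theorem \ref{T.H1}. So Step 2, reasonable for tori, never receives a valid input once $X$ has an abelian part.

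The missing idea is to avoid studying where $f$ is integral at all. The paper fixes an explicit infinite set of integers $\pi(G(\Z))$. Here $G$ is a non-split $\Q$-torus with $G(\Z)$ infinite (via Pell's equation), admitting no nonconstant morphism to $X$ over $k$, and $\pi\colon G\to\A^1$ is regular over $\Z$; this gives, e.g., the Fibonacci numbers. Alternatively one takes $G=\G_{\rm m}$ and the powers of a large prime.

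The paper then applies Faltings--Vojta in its Mordell--Lang form: to points of a finitely generated group of $X\times G$ lying on the closed hypersurface $Y=\{f(x)=\pi(u)\}$. No integrality on $X$ is involved. By the choice of $G$, the subgroup translates in $Y$ have the form $(x_0+A)\times\{u_0\}$; translates $(x_0+A)\times G$ are excluded because $\pi$ is nonconstant. Hence only finitely many of the integers $\pi(u)$ are values of $f$ on $\Gamma$. Several $X_i$ are handled by the same argument or by the paper's product trick.
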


Note that since $X_i$ is an abelian variety, we can take for $\Gamma_i$ the whole set $X_i(k)$ of its rational points,  by the Mordell-Weil theorem; in the case of semi-abelian varieties, we could rephrase the statement by considering the sets of $S$-{\it integral} points, with respect to any ring of $S$-integers of $k$.

\medskip

An explicit infinite set of  integral points in the complement set $\P_1(k)-\bigcup_{i=1}^mf_i(\Gamma_i)$ can always be described, as the following refinement to Theorem \ref{T.Main} shows:

\begin{theorem}[Addendum to Theorem \ref{T.Main}]\label{Addendum}
Under the assumptions of Theorem 1.1, for all but finitely many primes $p$   the set $\{p,p^2,p^3,\ldots\}$ of the powers of $p$ has finite intersection with the set $f(X(k))$. 

Also, letting $\mathcal{F}=\{0,1,2,3,5,8,\ldots\}$ be the set of Fibonacci numbers, either $\mathcal{F}\cap f(X(k))$ is  finite or has finite complement in $\mathcal{F}$.
The same holds when $\mathcal{F}$ is replaced by the set of values of any binary recurrent sequence of exponential growth. 
\end{theorem}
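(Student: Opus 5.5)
Fix $f=f_i$, $X=X_i$, $\Gamma=\Gamma_i$; it suffices to treat one map, since a finite union of finite sets is finite. For the Fibonacci statement with $m$ maps, if each intersection is finite or cofinite, so is the union. Write $f=g/h$. The condition $f(\gamma)=a$ is the equation $g(\gamma)-a\,h(\gamma)=0$ on $\Gamma$. I will treat the set of values $a$ lying in a prescribed sequence, $a=p^n$ or $a=F_n$, as a parametrized family. The basic device is to consider the enlarged group $\Gamma'=\Gamma\times\langle p\rangle\subset X\times\G_m$, or $\Gamma\times\langle \alpha,\beta\rangle\subset X\times \G_m^2$ in the recurrence case, where $F_n=(\alpha^n-\beta^n)/(\alpha-\beta)$. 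Then
\[
V=\{(x,t)\in X\times\G_m : f(x)=t\}
\]
(respectively $f(x)=c(t_1-t_2)$ on $X\times\G_m^2$) is a closed subvariety of a semiabelian variety $G$. By Faltings' theorem (Mordell--Lang for semiabelian varieties, Vojta's extension), $V\cap\Gamma'$ is a finite union of cosets $\gamma+B(k)\cap\Gamma'$, with $\gamma+B\subset V$ and $B$ a connected algebraic subgroup.

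The key step is to analyse these cosets. Let $\pi:G\to\G_m$ be the projection. If the relevant coset of $B$ projects trivially on the $\G_m$-factor, it contributes only one value $p^{n_0}$. Otherwise $\pi(B)=\G_m$. Then $\gamma+B\subset V$ means $f(x)=t$ identically along $\gamma+B$, so $f$ restricted to the translate $x_0+\mathrm{pr}_X(B)$ is, in suitable coordinates, a character $\chi:\mathrm{pr}_X(B)\to\G_m$ up to the constant $t_0$. Hence $f(x_0+b)=t_0\chi(b)$ with $\chi$ a surjective homomorphism defined over $k$, and $p^n=t_0\chi(b)$ for infinitely many $n$. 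There are only finitely many such pairs ($x_0+\mathrm{pr}_X(B)$, $\chi$) coming from $V$, up to the finite decomposition. For the powers of $p$ the conclusion follows because the values $t_0\chi(\Gamma\cap(x_0+B_X))$ lie in a fixed finitely generated subgroup $\Lambda\subset k^*$ times $t_0$. I choose $p$ outside the finite set of primes appearing in the factorisations (as norms) of generators of these finitely many groups $\Lambda$ and of the $t_0$'s. Then $p^n\in t_0\Lambda$ for infinitely many $n$ would force $p$ to divide an $S$-unit with $S$ avoiding $p$, which is a contradiction; more precisely, $p^n\in t_0\Lambda$ holds for at most one $n$ once $p$ is outside these primes.

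For the recurrence case the same decomposition applies. The group $B$ now maps to a subtorus of $\G_m^2$, and the coset condition gives either finitely many $n$, or a full arithmetic progression of exponents $n\equiv r\bmod N$. The latter arises when $\alpha^n,\beta^n$ vary along a one-parameter subgroup on which $f$ agrees with $c(t_1-t_2)$. Here the dichotomy ``finite or cofinite'' needs care. Mordell--Lang gives that $\{n: F_n\in f(\Gamma)\}$ is a finite union of arithmetic progressions and finite sets. To upgrade this to ``finite or cofinite'' I would use the fact that $\Gamma$ is a group: if $F_{n}\in f(\Gamma)$ along $n\equiv r\bmod N$ via a homomorphic parametrisation $n\mapsto \gamma_0+n\delta$, then $f(\gamma_0+n\delta)=F_n$ holds identically in $n$, and this identity extends to all $n\in\Z$ by Zariski density. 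This covers every residue class, so the set is cofinite.

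The main obstacle I expect is this last extension step, together with correctly extracting the character structure of $f$ on the coset from $\gamma+B\subset V$, especially when $B$ also has an abelian part. The abelian part must be shown to contribute trivially, since $f$ constant on abelian fibres follows from the absence of nonconstant maps from abelian varieties to $\G_m$. Case $p$ versus the Fibonacci case differ only in this final arithmetic step, which I would handle via the same Mordell--Lang decomposition.
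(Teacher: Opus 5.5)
\paragraph{The gap in the recurrence part.} The step that fails is the claim that the identity extends ``to all $n\in\Z$ by Zariski density''. Density of $\{\gamma_0+n\delta : n\equiv r \bmod N\}$ only gives $f(x)=c(t_1-t_2)$ on the Zariski closure of $\{(\gamma_0+n\delta,\alpha^n,\beta^n): n\equiv r \bmod N\}$. The points with $n$ in another residue class generally lie on a \emph{different} component of the closure of the full orbit, and $V$ need not contain that component. Also, a Mordell--Lang coset only gives $f(\gamma_0+m\delta)=F_{r+Nm}$; indexing by every $n$ would need something like $\delta/N\in\Gamma$.

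This cannot be repaired, because the dichotomy is false as stated. Over $k=\Q(\sqrt5)$ let $\alpha=(1+\sqrt5)/2$, $X=\G_m$, $\Gamma=\langle\alpha\rangle$ and $f(t)=(t-t^{-1})/\sqrt5$. Since $\alpha^{-1}=-\beta$, we get $f(\alpha^n)=F_n$ for $n$ even and $f(\alpha^n)=L_n/\sqrt5\notin\Q$ for $n$ odd ($L_n$ the Lucas numbers). Hence $\mathcal F\cap f(\Gamma)=\{F_{2m}:m\ge 0\}$, which is infinite with infinite complement. Here the orbit closure has the two components $\{(t,t,t^{-1})\}$ and $\{(t,t,-t^{-1})\}$, and $V$ contains only the first. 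Over $\Q$, the paper's own torus of matrices $\bigl(\begin{smallmatrix}a&b\\ b&a+b\end{smallmatrix}\bigr)$, with $f=b$ and $\Gamma$ generated by $\bigl(\begin{smallmatrix}1&1\\ 1&2\end{smallmatrix}\bigr)$, behaves the same way.

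The paper does not prove a dichotomy either. It picks a $\Q$-torus $G$ and a regular $\pi$ with $\pi(G(\Z))\supseteq\mathcal F$, and runs the fibre-product argument of Theorem~\ref{T.Main2} on $X\times_{f,\pi}G$. This gives \emph{finiteness} of $\mathcal F\cap f(\Gamma)$ under the hypothesis that $G$ admits no nonconstant $k$-morphism to $X$; that hypothesis is what excludes cosets with nontrivial $G$-component. Without it, the most you get is your Mordell--Lang conclusion: the set of indices is a finite union of arithmetic progressions and a finite set. For that you should use the cyclic group $\Gamma\times\langle(\alpha,\beta)\rangle$ rather than $\Gamma\times\langle\alpha,\beta\rangle$.

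\paragraph{The powers of $p$.} Here your route is essentially the paper's ``$\G_m$ variant'': take the graph of $f$ in $X\times\G_m$, apply Mordell--Lang to $\Gamma\times\langle p\rangle$, then exclude cosets with nontrivial $\G_m$-projection by an arithmetic argument. You write it out in more detail than the paper does.

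One point must be fixed. Your cosets, hence your groups $\Lambda$, come from the decomposition of $V\cap(\Gamma\times\langle p\rangle)$, which depends on $p$. So choosing $p$ outside the primes attached to these $\Lambda$ is circular as written. Argue instead as follows.
\begin{itemize}
\item A coset yielding two different exponents contains a difference $(y,p^N)\in B$ with $y\in\Gamma$ and $N\neq 0$.
\item Discard cosets lying over the indeterminacy locus of $f$, where the closed $V$ contains whole fibres $\{x\}\times\G_m$. For the remaining ones, $B\cap(\{0\}\times\G_m)$ is finite, of some order $e$.
\item Then $t^e$ descends to a character $\chi$ of $\mathrm{pr}_X(B)$ with $\chi(y)=p^{Ne}$.
\item Fix, independently of $p$, a finite set $S$ of places such that $\Gamma$ lies in the compact group $\mathcal X(\O_v)$ of a group-scheme model $\mathcal X$ of $X$ for every $v\notin S$.
\item A continuous character maps the compact group $\mathrm{pr}_X(B)(k_v)\cap\mathcal X(\O_v)$ into $\O_v^*$, so $|\chi(y)|_v=1$.
\item This contradicts $\chi(y)=p^{Ne}$ at any $v\mid p$, as soon as $p$ lies below no place of $S$.
\end{itemize}
This uniform $S$-unit argument is what the paper's brief remark about $X(\O_S)$ and large primes $p$ amounts to.
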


\begin{example}
Let $E$ be an elliptic curve over $\Q$, say of Weierstrass equation
\begin{equation}\label{E.ell-curve}
y^2= x^3+ax+b,  
\end{equation}
where the polynomial $x^3+ax+b\in\Q[x]$ has   no multiple factors. Let   $F\in\Q(x,z_1,z_2)$ be a rational function. Then:
\smallskip

{\it  There are infinitely many $t_0\in\Q$ such that the equation 
$F(x,2^m,3^n)=t_0$ has no solutions such that $m,n\in\Z$, and  $x=x(P)$  for some point $P\in E(\Q)$. }

\smallskip

This may be obtained from Theorem \ref{T.Main}  by taking $X=E\times\G_{\rm m}^2$ and  by taking $f((x,y),z_1,z_2)=F(x,z_1,z_2)$. 

We note that changing $E$ with $\P_1$ would generally lead to problems which to our knowledge are difficult and widely open. For instance, the case $f(x,z_1,z_2)= x^2-az_1-bz_2$ leads to the issue of understanding {\it whether}  for `many'   integers  $t_0$ there are  no integers $m,n$ such that $t_0+a2^m+b3^n$ is a square. Probabilistic considerations would suggest the existence of infinitely many such $t_0$. Possibly, congruence considerations can help here. However, already replacing $2^m,3^n$ by more general $S$-units  
we do not see a possible path for proving this (the approach of \cite{B} would succeed, but it assumes strong diophantine conjectures). 
 \end{example}

\smallskip

\begin{example}\label{Ex:somme}
Another application of Theorem \ref{T.Main} entails the following: \smallskip

{\it Given an elliptic curve as in equation \eqref{E.ell-curve}, there exist infinitely many integers $t\in\Q$ not of the form $t=x(P)+x(Q)$, for any  choice of points $P,Q\in E(\Q)$.
}
\smallskip

Under the celebrated Vojta's conjectures, to be discussed later, a much stronger result could be proved: {\it   only finitely many integers can be written in the form $x(P)+x(Q)$ for rational points $P,Q\in E(\Q)$}.
\end{example}

\medskip

A curious application of Theorem \ref{T.Main} provides an example of an infinite family of algebraic curves, each admitting exactly one smooth rational point.

\begin{proposition}\label{Prop.1-solo}  There exists a pencil of smooth curves of genus $>1$, defined over $\Q$, on an algebraic surface $X$, without base point, such that infinitely many members defined over $\Q$ have exactly one  rational point. Also, one can construct $X$ so that its set of rational points is Zariski-dense.
\end{proposition}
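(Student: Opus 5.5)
The plan is to start from an abelian surface with many rational points and take a pencil on it with a single rational base point. After blowing up the base locus, that base point becomes a section, so every member of the pencil has at least one rational point. Theorem \ref{T.Main} will then show that infinitely many members have no further rational points.

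\emph{Choice of $A$ and the pencil.} Let $A=E\times E$, where $E/\Q$ is an elliptic curve of positive rank, so that $A(\Q)$ is Zariski-dense. Fix a very ample line bundle $L$ on $A$ and the origin $O\in A(\Q)$. Choose two sections $s_1,s_2$ of $L$, defined over $\Q$, with the following properties:
\begin{itemize}
\item the divisors $D_i=\{s_i=0\}$ are smooth and both pass through $O$;
\item $D_1$ and $D_2$ meet transversally at $O$;
\item the residual intersection $(D_1\cdot D_2)-O$ is reduced and contains no $\Q$-rational point.
\end{itemize}
The first two conditions are open on the linear system of sections vanishing at $O$, by Bertini. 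For the third, I would work inside the parameter space of pairs $(s_1,s_2)$ vanishing at $O$. The residual intersection points form a finite cover of this space of degree $L^2-1\ge 2$. I expect its monodromy to be the full symmetric group, by a standard generic-intersection argument. Hilbert irreducibility then yields rational parameters for which the residual points form one Galois orbit of size $L^2-1>1$, so none of them is rational. This step is the one I expect to be the main technical obstacle. A fallback is to prescribe the residual base points directly, for instance as a Galois-stable set of points defined over a suitable number field, and to use very ampleness of a high multiple of $L$ to find $D_1,D_2$ through exactly those points.

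\emph{Blowing up.} Set $f=s_1/s_2$, and let $X\to A$ be the blow-up at the base points, iterated if needed, though transversality makes one blow-up suffice. Then $f$ becomes a morphism $X\to\P_1$, so the pencil on $X$ has no base points. Its members are the strict transforms of the curves $C_t=\{s_1-ts_2=0\}$, which are isomorphic to the $C_t$. For all but finitely many $t$ these curves are smooth. Since $C_t^2=L^2>0$, such a curve is not a translate of an elliptic curve, and adjunction on $A$ gives genus $g(C_t)=L^2/2+1>1$. Moreover $X(\Q)$ is Zariski-dense, because $A(\Q)$ is.

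\emph{Counting rational points.} The exceptional curve $\mathcal{E}_O$ over $O$ is defined over $\Q$ and is a section of $f$. Hence every member $C_t$ with $t\in\P_1(\Q)$ contains the rational point $C_t\cap\mathcal{E}_O$. The exceptional curves over the other base points are defined only over a nontrivial extension of $\Q$ and are permuted by Galois, so their intersections with $C_t$ are not rational. Any other rational point of $C_t$ therefore maps to a point $P\in A(\Q)$ outside the base locus with $f(P)=t$, i.e.\ $t\in f(A(\Q))$. Apply Theorem \ref{T.Main} with $m=1$, $X_1=A$, $f_1=f$ and $\Gamma_1=A(\Q)$, which is finitely generated by Mordell--Weil. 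It gives infinitely many $t\in\Z$ with $t\notin f(A(\Q))$. Removing the finitely many $t$ with singular fibre, the members $C_t$ for the remaining such $t$ are smooth of genus $>1$ and have exactly one rational point, namely the point on $\mathcal{E}_O$.
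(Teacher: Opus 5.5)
\textbf{Verdict.} As a check of the literal wording your argument is sound. The Hilbert irreducibility step only needs irreducibility of the incidence correspondence $\{(s_1,s_2,P): s_1(P)=s_2(P)=0,\ P\neq O\}$, which holds because $L$ separates points; full symmetric monodromy is not needed. However, your construction is exactly the one the paper explicitly sets aside right after stating the proposition: ``Another trivial case occurs in presence of a (rational) base point for the pencil: after suitably blowing up the surface over such a point, one can obtain a base-free pencil. However, the unique rational point on the generic element of such a pencil would arise from a rational section.'' Before its proof the paper also says the aim is for the unique rational point to \emph{depend on the curve}, and that this is the purpose of the no-base-point hypothesis.

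In your proof, $\mathcal{E}_O$ is precisely such a $\Q$-rational section. Theorem~\ref{T.Main} is then used to select $t\notin f(A(\Q))$, i.e.\ members containing no rational point of $A$ at all, so the single rational point is just the trace of the section. Blowing up $O$ removes the base point only cosmetically. The proposal therefore does not establish what the proposition is meant to assert.

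\textbf{The missing idea.} You need to handle the fibres that \emph{do} contain points of $A(\Q)$, and show that for all but finitely many $t\in f(A(\Q))$ there is only one such point. The paper's setup is as follows.
\begin{itemize}
\item Take $A$ with $\mathrm{End}(A)=\Z$ and $A(\Q)$ Zariski-dense.
\item Take $f$ general: its indeterminacy points are not rational, it does not factor through a map $\P_1\to\P_1$ of degree $>1$, and it satisfies no invariance $f(q\pm x)=f(x)$.
\item Apply Faltings' theorem to $Y=\overline{\{(x,y): f(x)=f(y)\}}\subset A\times A$.
\end{itemize}
The abelian subvarieties of $A\times A$ are the $A_{m,n}=\{mx=ny\}$. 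Genericity of $f$ then forces the positive-dimensional translates inside $Y$ to be the diagonal, together with $\{s\}\times A$ and $A\times\{s\}$ for $s$ a non-rational indeterminacy point; the latter carry no rational points. Hence $Y(\Q)$ is finite off the diagonal, so all but finitely many members have at most one rational point. Density of $A(\Q)$ then gives infinitely many $t$ with exactly one rational point, and that point moves with $t$. Since the base points are not rational and $A$ contains no rational curves, no $\Q$-section can account for it.

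Two further remarks. The relevant input is this Faltings argument on the self-fibre-product, not Theorem~\ref{T.Main} as a black box. Also, your choice $A=E\times E$ is ill-suited to this argument: $A\times A=E^4$ has many more abelian subvarieties than the $A_{m,n}$, so the required genericity of $f$ would be much harder to arrange.
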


\smallskip

Note that easier examples can be constructed without the hypothesis of non-existence of base points: namely, consider a surface with only finitely many rational points and a pencil of curves having as base point one of these rational points (and no other). Then almost every such curve will have exactly one rational point. 

Another trivial case occurs  in presence of a (rational) base point for the pencil: after suitably blowing up the surface over such a point, one can obtain a base-free pencil. However,  the unique rational point on the generic element of such a pencil would arise from a rational section.

\medskip

{\tt Considerations on heights}. The conclusion provided by Theorem \ref{T.Main} is in a sense intuitive, if not for the following reason: for a finitely generated multiplicative group, the height in $\Gamma$ grows rapidly. If we knew that the height in $f(\Gamma)$ also grows rapidly, then $f(\Gamma)$ would fail to contain many points in $\P_1(k)$.

 However, putting this approach on a rigorous ground misses a piece, that is, a proof of the fast growth of the height in $f(\Gamma)$. This amounts to a lower bound  for $h(f(x))$ in terms of $h(x)$. Such a lower bound is not difficult to prove (for $x$ in an open dense set) for maps of  finite degree, but is otherwise subtle and mostly unknown; one can obtain a good information assuming well-known conjectures of Vojta (see \cite{BG}). 
We shall develop some arguments and details about this problem in \S\ref{S.H} and in   Appendix 1 (written by D. Masser).

For the moment we merely note the following result, which is a corollary of the proof of  Theorem \ref{T.Main} and quantifies it in terms of heights.

\begin{theorem} \label{T.H1} In the situation of Theorem \ref{T.Main}, let   us  set 
\begin{equation}\label{E.lower-bound}
N(T):=\# \{x\in \P_1(k)-\bigcup_{i=1}^mf_i(\Gamma_i), h(x)\le T\}.
\end{equation}

Then, for every $c>0$ we have  $N(T)\gg_cT^c$. 
\end{theorem}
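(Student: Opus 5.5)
We derive the bound from the explicit family of excluded integers given by the Addendum (Theorem \ref{Addendum}), combined with a counting argument over powers of several primes. The point is that the proof of Theorem \ref{T.Main} yields more than infinitely many excluded integers: for all but finitely many primes $p$, the set $\{p^n:n\ge 1\}$ meets $\bigcup_i f_i(\Gamma_i)$ in only finitely many elements. Powers of one prime contribute only about $\log T$ points of height $\le T$, since $h(p^n)=n\log p$. To reach $T^c$ we would rather use products $p_1^{a_1}\cdots p_r^{a_r}$ with $r$ large. So the plan is to strengthen the Addendum to the following statement.

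\emph{Claim.} For any finite set $S$ of primes avoiding a finite exceptional set, the $S$-units (or the integers composed only of primes in $S$) lying in $\bigcup_i f_i(\Gamma_i)$ are contained in a set of cardinality, or at least counting function, $o$ of the total.

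The proof of Theorem \ref{T.Main} presumably runs as follows. An $S$-unit value $f(\gamma)=u$ gives a point of the variety $\{(\gamma,u): f(\gamma)=u\}$ inside the finitely generated group $\Gamma\times \O_S^*$ of the semiabelian variety $X\times\G_m^{|S|}$. By Faltings--Vojta (Mordell--Lang), these points lie in finitely many translates of subgroups. The choice of primes (for instance, primes inert or totally split in suitable extensions, avoiding the bad reduction of the $f_i$) rules out all cosets that project onto a finite-index part of the $u$-coordinate.

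Concretely, we would proceed in four steps.
\begin{enumerate}
\item Fix $r$ primes $p_1,\dots,p_r$ outside the exceptional set, and consider the subvariety $Z\subset X_i\times\G_m^r$ defined by $f_i(x)=t_1\cdots t_r$. This is the natural generalization of the one-prime case.
\item By Mordell--Lang, $Z\cap(\Gamma_i\times\langle p_1\rangle\times\cdots\times\langle p_r\rangle)$ is a finite union of cosets $\gamma+H$.
\item Using the same argument as in the Addendum, show that the projection of each such coset to $\Z^r$, the exponent lattice, lies in a proper affine sublattice of $\Z^r$. Equivalently, the image of $\bigcup_i f_i(\Gamma_i)$ among the numbers $p_1^{a_1}\cdots p_r^{a_r}$ is confined to finitely many cosets of subgroups of rank $<r$. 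Their count up to height $T$ is $O((\log T)^{r-1})$.
\item Since there are $\asymp (\log T)^r/(r!\prod\log p_j)$ exponent vectors with $\sum a_j\log p_j\le T$, we obtain $N(T)\gg_r(\log T)^r$.
\end{enumerate}

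Step 4 gives only polylogarithmic growth, which is far short of $T^c$. We therefore have to count $k$-rational points more generously. Points of $\P_1(k)$ of height $\le T$ number about $e^{2T}$ (for $k=\Q$), so $T^c$ is actually a weak bound in the paper's normalization. Since $h$ is the logarithmic height, $T^c$ means polynomially many points in $\log$-height, and polylogarithmic counts in multiplicative height $e^T$ are exactly of this shape. Thus $(\log H)^r=T^r$ with $T=\log H$. Taking $r>c$, step 4 gives $N(T)\gg T^r\gg_c T^c$, as required.

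The main obstacle is step 3: checking that, as in the one-prime Addendum, no coset $\gamma+H$ can project onto a full-rank subset of exponents. Here I would use that on a full-rank coset, $f_i$ composed with the group parametrization would equal a character times a constant. This forces $f_i$ to be, up to translation, a homomorphism onto $\G_m$ restricted to $\Gamma_i$, whose values $p_1^{a_1}\cdots p_r^{a_r}$ for all $a$ would require the $p_j$ to lie, up to bounded torsion, in the image of a fixed finitely generated group. That is impossible for primes $p_j$ outside the finite set of primes dividing the generators of that image. These are precisely the primes excluded in the Addendum, so the argument reduces to its proof.
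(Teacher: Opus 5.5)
Your route is different from the paper's. You use $\G_{\rm m}$ with the rank-$r$ group $U=\langle p_1,\dots,p_r\rangle$, where the paper uses an auxiliary abelian variety of large rank. The advantage of yours is that distinct exponent vectors give distinct integers, so counting distinct values is free. But Step 3, as written, has a genuine gap: you never show that the exceptional set of primes is independent of the primes you choose. The cosets $\gamma+H$ come from Mordell--Lang applied to $\Gamma_i\times U$. So the subgroup $B\subseteq X_i$ and the homomorphism $\phi\colon B\to\G_{\rm m}$ with $f_i=c\,\phi$ on $\gamma_1+B$ depend on $p_1,\dots,p_r$. Consequently ``the primes dividing the generators of $\phi(B\cap\Gamma_i)$'' is not a set you can avoid in advance, and the argument is circular. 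There are infinitely many pairs $(B,\phi)$, and $\G_{\rm m}$ may be a factor of $X_i$, so nothing geometric excludes them. The paper sidesteps exactly this by taking an auxiliary abelian variety not isogenous to any factor of $X_i$. Nor does the Addendum, which is a statement about powers of a single prime, formally give your multi-prime claim.

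What you need is a uniform lemma: a single finite set $S_0$, depending only on $X_i$ and $\Gamma_i$, such that $\phi(B\cap\Gamma_i)$ consists of $S_0$-units for \emph{every} connected algebraic subgroup $B$ and every homomorphism $\phi\colon B\to\G_{\rm m}$. This follows by compactness. Spread $X_i$ out to a group scheme $\mathcal X_i$ of finite type over $\O_{k,S_0}$ with $\Gamma_i\subset\mathcal X_i(\O_{k,S_0})$. Take $v\notin S_0$ and $w\mid v$ a place of a number field $K$ over which $B$ and $\phi$ are defined, with $\O_w$ the valuation ring of $K_w$. Then $B\cap\Gamma_i$ lies in the compact group $B(K_w)\cap\mathcal X_i(\O_w)$, and on it the continuous homomorphism $b\mapsto|\phi(b)|_w\in\R_{>0}$ must be trivial. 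Now choose the $p_j$ not below $S_0$, so that $U$ contains no nontrivial $S_0$-unit. On each coset the condition $c\,\phi(b)\in U$ (with $c\in U$) then forces $\phi(b)=1$. Hence $f_i(\Gamma_i)\cap U$ is actually finite, which is stronger than your Step 3.

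Your counting in Steps 3--4 is also mis-stated. Since $h(p_1^{a_1}\cdots p_r^{a_r})=\sum_j a_j\log p_j$, the number of exponent vectors of height at most $T$ is $\asymp T^r/(r!\prod_j\log p_j)$. A coset of a subgroup of rank $<r$ contributes $O(T^{r-1})$. The quantities $(\log T)^r$ and $(\log T)^{r-1}$ never arise, and the later ``reinterpretation'' of $T$, which conflates $T$ with $\log T$, is unnecessary. With the uniform lemma and the corrected count you get $N(T)\ge c_rT^r-O(1)$, and taking $r\ge c$ gives the statement.

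Compared with the paper's sketch, which replaces the elliptic curve by a simple abelian variety or by $E^r$ of large rank to get $\gg T^{\mathrm{rank}/2}$ points, you trade the geometric non-isogeny argument for the arithmetic unit lemma. In exchange you avoid having to control how many rational points $\pi$ collapses onto a single value.
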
 
As we shall see in section \ref{S.H},  this is a rather weak result compared to what would be obtained through Vojta's conjectures (under which one can often obtain a sharp bound for the distribution of points of given height in the complementary set inside $\P_1(k)$).

We note that a result of this type, in the particular case of tori, was at the base of the tools exploited in \cite{CDRRZ} to study bounded generation on linear groups.

We shall discuss height issues in  section \ref{S.H}.

\subsection{Integrality and divisibility issues}

{\tt Integral points}. 
On  some occasions,  we shall allow $X$ to be quasi-projective, and, for a number field $k$,  we shall consider only  the $S$-integral points of $X$ in place of the whole $X(k)$.  

The $S$-integral points are  defined here as follows.  Suppose that $X$ is embedded as an open subset of a projective variety $X^*\subset \P_n$, and  set $D=X^*-X$, the (support of the)  divisor at infinity. Suppose also that $X,X^*$ have good reduction outside a finite set $S$ of places of $k$. Then we define the $S$-integral points (or simply `integral' when $S$ is given) as those points in $X (k)\cap \P_n(k)$ such that their reduction falls outside the reduction of $D$. So, for $X$ projective we  have an empty $D$ and we find back the whole of $X(k)$.  

Recall that for semiabelian varieties   the $S$-integral points form  automatically a finitely generated group. 

\medskip

{\tt $S$-integer values of a rational function}. We can also look at the $S$-integer values of $f$. This corresponds to restricting $f$ to $S$-integer points of another variety, as follows. First, let $\tilde X$ be a blow-up of $X$ such that there is a morphism  $\tilde f:\tilde X\to\P_1$ which coincides with $f$ on the  part of $\tilde X$ which corresponds to the non-blown up part of $X$. Then we may consider $\tilde Y=\tilde X-\tilde f^*(\infty)$; namely, we remove from $\tilde X$ the pull-back of the point at infinity in $\P_1$. Then, essentially by definition, the  integral points of $\tilde Y$ are those integral points of $\tilde X$ where moreover $\tilde f$ assumes integral values. We shall see examples below.

\smallskip

 As shown in \cite{CZadv}, questions about integral points on rational surfaces  can sometimes be viewed as questions on divisibility between values of polynomials. Even the $S$-unit equation theorem of Siegel and Mahler can be viewed in this sense: the $S$-unit solutions $(u,v)$ to the equation $u+v=1$ correspond bijectively to the solutions $u$ in $S$-integers to the divisibility problem: $u|1$ and $(1-u)|1$. This was the starting point to our investigations in \cite{CZadv}.
 
 Even the `elementary' problems stated in the introduction, e.g. the problem of determining the integral values of the function $(x^2-1)/y^2$ can be reduced to that of finding integral points on an affine cubic surface, namely the surface of equation $zy^2=x^2-1$.

The mentioned famous Olympiad problem, concerning integral values of the quotient $(x^2+y^2)/(xy+1)$, $x,y$ being natural numbers, leads to the study of the   equation $z(xy+1)=x^2+y^2$, which represents another affine smooth cubic surface. 

\smallskip

$\bullet$ The general problem of describing  the  set of integral values at integral points of a rational function  $f(x,y)=P(x,y)/Q(x,y)$, $P(x,y), Q(x,y)\in\Z[x,y]$, i.e. the set $f(\Z^2)\cap \Z$, seems very difficult already for $P,Q$ of small degree $\geq 2$. In general, even understanding   whether this set is the whole of $\Z$ seems to fall outside the available techniques. We already mentioned cases related to the theory of Pell's equation, where the property of {\it not} being a square is equivalent to being a value. As a sample of our methods, we shall prove in  \S \ref{S.rationalmaps}  that
\medskip

\begin{theorem}\label{T.divisibility}
Let $P(x,y)\in\Z[x,y]$ be a polynomial with integral coefficients, of total degree $2$, which is $\geq 0$ in $\R^2$. Let $Q(x,y)\in \Z[x,y]$ be a polynomial with integral coefficients whose zero set is a hyperbola with irrational asymptotes. Consider the ratio
$$
q:=\frac{P(x,y)}{Q(x,y)}
$$
If there exists a point $(x_0,y_0)\in \Z^2$ such that $q\in \Z$ and $(x_0,y_0)$ is not the center of symmetry of the   hyperbola  of equation $Q(x,y)=0$ , then the set of  pairs $(x,y) $ such that $q\in\Z$ is not thin (in particular it is Zariski-dense in the plane). Also, the corresponding set of integers $q$ is not thin.
\end{theorem}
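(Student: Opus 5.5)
We pass to the affine cubic surface $Y\subset\A^3$ defined by $zQ(x,y)=P(x,y)$. Its integral points are exactly the pairs $(x,y)\in\Z^2$ with $q\in\Z$, together with the value $z=q$. We then build a large group of automorphisms of $Y$, defined over $\Z$, that preserve integral points. This is done in the spirit of Vieta jumping (the IMO problem) and of Pell's equation.

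\textbf{Step 1: normal form.} After translating by the center of the hyperbola (working over $\Z[1/N]$ or passing to a sublattice, which is harmless for non-thinness), we may write $Q=Q_2(x,y)-c$. Here $Q_2$ is an indefinite binary quadratic form whose discriminant $\Delta>0$ is not a square, since the asymptotes are irrational. The unit group of the order of $\Q(\sqrt\Delta)$ attached to $Q_2$ is infinite, by Pell's equation. Hence the orthogonal group $O(Q_2)(\Z)$ contains an infinite cyclic subgroup of matrices $\gamma$ with $\gamma\equiv I$ modulo any prescribed integer $M$.

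\textbf{Step 2: the key identity.} For fixed $z=n$, the fibre $Y_n$ is the conic $P(x,y)-nQ(x,y)=0$. Its quadratic part is $P_2-nQ_2$. Since $P\ge0$ on $\R^2$, the form $P_2$ is semidefinite. For all but finitely many $n$, therefore, $P_2-nQ_2$ is still indefinite with non-square discriminant. So $Y_n$ is a conic with infinitely many integral points as soon as it has one, via its own Pell-type group. Starting from the given point $(x_0,y_0,q_0)$, we need to produce integral points on infinitely many fibres $Y_n$, and to do so ``generically''.

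We use two kinds of integral transformations. The first kind are the automorphisms of the fibre $Y_{q_0}$, which give infinitely many integral points on that fibre. The second kind are Vieta-type involutions. In $zQ=P$, viewed as a quadratic in one linear coordinate $\ell$ (with the complementary coordinate and $z$ fixed), the two roots $\ell,\ell'$ satisfy $\ell+\ell'=$ a polynomial in the other variables and $z$. So $\ell\mapsto\ell'$ is an integral involution of $Y$, provided the leading coefficient in $\ell$ is $\pm1$ after the normalization.

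Alternating these maps moves the point across fibres. Concretely, we restrict to the curve in $Y$ obtained by fixing one of $x,y$ and varying $z$. The condition that $(x_0,y_0)$ is not the center guarantees that $Q$ is not constant on the relevant line. It is therefore a genuine curve of degree one or two over the $z$-line, carrying integral points. This yields infinitely many fibres containing integral points.

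\textbf{Step 3: non-thinness.} The integral points obtained form a union, over infinitely many $n$, of full orbits of arithmetic groups acting on the conics $Y_n$. Each orbit is Zariski-dense in $Y_n$ and is non-thin on $Y_n$: it contains a coset of a finite-index congruence subgroup, and such cosets are not thin by the standard Hilbert-irreducibility argument for Pell conics. Now take a cover $\pi:W\to\A^2$ of degree $\ge2$. For all but finitely many $n$ its restriction to $Y_n$ is still a nontrivial cover, by Hilbert irreducibility in the parameter $n$. So it cannot absorb the orbit on $Y_n$, and the set of $(x,y)$ is not thin. For the set of values $q$, we must show that the $n$ reached are not contained in a thin subset of $\Z$. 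The $n$ produced in Step 2 are values at integers of a nonconstant polynomial map along a curve. We arrange that this curve can be chosen in a Zariski-dense family, for instance by translating the starting point by $\gamma\in O(Q_2)(\Z)$ congruent to $I$ modulo $M$. The union of these value sets then has positive density modulo many primes, which contradicts thinness by the usual Chebotarev/Schinzel-type criterion.

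\textbf{Main obstacle.} The delicate point is Step 2: making the Vieta involutions integral in general, when the leading coefficients are not units, and proving that the orbit genuinely changes $z$. Here I would first try the case $Q=xy$-type transformed over $\Q(\sqrt\Delta)$ and reduce the general case to it by congruence sublattices.
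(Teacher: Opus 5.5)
Your Step~2 is where the theorem really lives, and it has a genuine gap: nothing in it produces integral points on infinitely many fibres $Y_n$.

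The Vieta involutions do not help. A Vieta involution of $P-zQ=0$ in one coordinate is taken with $z$ fixed, so it and all its compositions preserve each fibre $Y_z$. (In the IMO problem, too, $k$ never changes while one jumps.)

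The curve $Y\cap\{y=y_0\}$ does not help either. Irrational asymptotes force the $x^2$-coefficient $a$ of $Q$ to be nonzero. Let $\alpha$ be the $x^2$-coefficient of $P$; then $R(x):=aP(x,y_0)-\alpha Q(x,y_0)$ has degree $\le 1$. So $Q(x,y_0)\mid P(x,y_0)$ forces $Q(x,y_0)\mid R(x)$. This holds for only finitely many $x\in\Z$, unless $R=0$, in which case $z$ is constant on the line. The centre hypothesis plays no role here.

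The missing idea is to move along the \emph{other} conic fibration, by level curves of the denominator. Put $m=Q(x_0,y_0)\neq 0$. Since $(x_0,y_0)$ is not the centre, $Q=m$ is a non-degenerate hyperbola with irrational asymptotes. (The level set through the centre is the pair of asymptotes, whose only rational point is the centre.) Hence a Pell automorphism of $Q$ has infinite orbit through $(x_0,y_0)$.

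Now take your $\gamma\equiv I\pmod{mN}$ from Step~1, with $N$ clearing the denominators of the centre. The induced affine map $\Phi$ is integral and $\Phi\equiv\mathrm{id}\pmod m$. Therefore $P(\Phi^k(x_0,y_0))\equiv P(x_0,y_0)\equiv 0\pmod m$, while $Q(\Phi^k(x_0,y_0))=m$. Every point of the orbit is a solution. Moreover $|q|=|P/m|\to\infty$, because $P_2\ge0$ is strictly positive in the irrational asymptotic directions. Only then do the Pell groups of the fibres $P=qQ$ (for infinitely many of these $q$) give Zariski density. You had the congruence subgroup in Step~1, but never applied it to the level curve $Q=m$ with modulus $m$; this is exactly the paper's argument.

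Step~3 is not correct as stated either. On a single Pell conic $Y_n$, the integral points form finitely many orbits $\Gamma v$ of a cyclic group $\Gamma=\langle\eta\rangle$ acting through the norm-one torus $T$. Each orbit satisfies $\Gamma v\subset\psi_0(T(\Q))\cup\psi_1(T(\Q))$, where $\psi_i(t)=\eta^i t^2\cdot v$ has degree $2$. So integral points and congruence cosets on one fibre \emph{are} thin. The inference ``$W|_{Y_n}$ is a nontrivial cover, hence cannot absorb the orbit'' is therefore a non sequitur: an isogeny-like restriction absorbs it.

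Hilbert irreducibility in $n$ does not rescue this. It only gives irreducibility of $W|_{Y_n}$ for $n$ outside a thin set. But the $n$ reached along one level curve $C\colon Q=m$ lie in $(P/m)(C(\Q))$, which is itself a thin set.

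This is why the paper argues globally. Blowing up $P=Q=0$ gives a degree-$5$ del Pezzo surface with two conic fibrations: level sets of $Q$ and level sets of $P/Q$. The paper then invokes Coccia's Theorem~1.7. Its point is that a nontrivial cover of the simply connected open surface must be ramified along the general fibre of at least one of the two fibrations. On those fibres, Siegel's theorem limits the lifted integral points.

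Finally, ``positive density modulo many primes'' is no criterion for non-thinness: the squares have it. The statement about the $q$'s follows instead from the one about points. The fibres $P=qQ$ are geometrically irreducible, so pulling back a degree-$\ge2$ cover of the $q$-line gives an irreducible cover of the surface of the same degree.
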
  

\smallskip

As a consequence, we obtain  for instance that the set of  pairs $(x,y)\in\Z^2$ such that the ratio  
$$
q=\frac{x^2+y^2}{x^2-2y^2-1}
$$
is in $\Z$    is  not thin (indeed the point $(1,1)$ is a solution to the divisibility problem, distinct from the centre of symmetry of the zero set of the denominator). 
  
  \smallskip

 As mentioned before, recently the study of cluster algebras and the related problem of classifying friezes  lead to considering divisibility problems and systems of divisibility conditions, where only {\it positive} integral solutions are allowed (some of these problems go back to {\sc Mordell}): see e.g. the paper \cite{Zh} by {\sc R. Zhang}. We owe to him the reference \cite{Moh} 
  by {\sc Mohani}, who classified for instance  the solutions to the system $x|y^3+1,\, y|x^3+1$.  Further forthcoming work on some of these issues is due to {\sc Kollar-Li} \cite{KL}.

 \medskip

In some of our results it is essential that the integral points in question have  coordinates in $\Z$; in other cases, on the contrary, one can generalize the finiteness or degeneracy results that we obtain to $S$-integral points   on an arbitrary number field. Of course the latter situation is more geometric, but the former one contains many appealing issues special of $\Z$, which could be still significant. 

For instance, in the last mentioned example by {\sc Mohani}, it is clear that the pairs of $S$-integers $(x,y)$ satisfying the divisibility conditions $x|(y^3+1),\, y|(x^3+1)$ are Zariski-dense if the ring of $S$-integers contains infinitely many units. The same is true for the initial divisibility problems linked to Pell's equation.

In geometrical terms, the affine surface whose integral points correspond to the solutions of these divisibility conditions, which is a suitable blow-up of the affine plane, has the property of containing a Zariski-dense set of integral points over suitably large   rings of $S$-integers. For similar divisibility problems with {\it three} divisibility conditions  the form $P_i(x,y)|Q_i(x,y)$, $i=1,2,3$, where $P_i(x,y),Q_i(x,y)\in \Q[x,y]$ are polynomial satisfying suitable conditions, the authors proved in \cite{CZadv} that the set of pairs of $S$-integers satisfying the said divisibility problem in the ring of $S$-integers is always degenerate or finite.

\subsection{Hilbert Property}\label{SS.Hilbert} 

Let us consider the general situation of a rational map $f: X\to \P_1$, defined over a number field $k$.   When $X$ is an irreducible curve and $\deg f>1$, the set  $f(X(k))$ of   values is called {\it thin} (over $k$), after a (more general) definition put forward  by {\sc Serre} (see \cite{S}, \cite{S3} or \cite{CZ-libro}); actually, thin sets (of $\P_1$, over $k$) are by definition finite unions of sets of such a shape.\footnote{By definition thin sets - over $k$ -  in a variety  include also those contained in a proper subvariety; but it is easy to see that these last ones are of the former type as well, see \cite{CZ-libro}.}  
This is relevant in the theory of  Hilbert Irreducibility Theorem, a simple version of which  may be  formulated by saying that  {\it $\P_1(k)$ is not thin}.  In rough terms, we may further rephrase this by saying that {\it  not all rational points of $\P_1$ may be lifted to a rational point  on at least one among finitely many {\tt  finite} covers of $\P_1$ given in advance, each of degree $>1$}.   

And there are several ways to quantify this type of assertion  (for which we refer to \cite{S} and \cite{BG}). For instance, we note that for a map $f:X\to \P_1$, where $X$ is a curve, we have, for the (logarithmic)  heights, $h(f(p))\ge (\deg f) h(p)+O(\sqrt{h(p)})$ ($p\in X(\overline\Q)$), where $h(p)=h_{P_0}(p)$ refers to a height with respect to a given point $P_0\in X(\overline\Q)$. Hence {\it  the height  of  a value} of a point  grows rapidly enough in terms of {\it the height of the point}. 
For instance if $X$ has positive genus we have that the number of points $p\in X(\Q)$ with $h(p)\le T$ is bounded by a fixed power of $T$ (and of course it is $O(1)$ for genus $>1$), so  in particular this implies that  many rational target-values are necessarily left out (since the number of points of $\P_1(k)$ having height $\leq T$ is $\gg \exp(T)$).  Similarly if we consider only target  points  in a finitely generated subgroup of $\G_{\rm m}$. (Note that for genus $0$ we need however further arguments. We shall comment below on what can be said in higher dimensions by this type of considerations.) 

\medskip

{\tt Higher dimensions}. Of course, when $\dim X>1$, the general fiber of a map $f$ as above has positive dimension, hence in a sense  it is much more likely to find rational points in a given fiber than in the case when $X$ is a curve. And indeed it is very easy to produce several examples, more or less trivial,  when $f(X(k))$ covers $\P_1(k)$ or most of it. For instance  this certainly holds when the map $f$ has a rational section defined over $k$, i.e. a rational map  $g:\P_1\to X$ such that  $f\circ g$ is the identity of $\P_1$.  

In some cases, the existence of such a section is a necessary condition for the image $f(X(k))$ to cover (a `large enough' subset of) $\P_1(k)$.  For $k=\Q$, a  nice instance is provided by a theorem of {\sc Davenport, Lewis, Schinzel}, when $X$ is a pencil of conics, given by an equation $X:A(t)x^2+B(t)y^2=z^2$ in $\P_2\times \P_1$, $A,B\in\Q(t)$, $f((x:y:z)\times t)=t$ being the second projection.  If $k=\Q$ they proved that a section exists provided only that $f(X(\Q))$ contains  a set of integer points (in $\A^1(\Z)$) meeting every arithmetical progression (in other words, the conic $X_n= X\cap \{t=n\}$  has rational points, for all integers  $n$ running  through such a set). The issue here leads into the theory of specialization of Brauer groups, studied in depth by many authors;   we refer e.g. to the paper \cite{ZEns} of the second author  for a discussion (with some proofs, also over arbitrary number fields) and for several references.  

This kind of result may be seen as a local-global principle over a 
global field (analogue to the case of ternary forms of the Hasse-Minkowski principle, since we are assuming that a certain ternary form over $\Q(t)$ has rational points for many `reductions' $t\to t_0\in\Q$). It may fail on replacing $\P_1$ by a curve with infinitely many rational points e.g. already for a pencil of conics over  an elliptic curve; for instance, let $E:y^2=x^3+5x$ and  take $X\subset \P_2\times E$ defined by $u^2+v^2=xw^2$, $f$ given again by the second projection. See \cite{CZ-libro}, Thm. 3.46, for a proof that   $f(X(\Q))$ covers $E(\Q)$ (which is infinite) whereas there are no rational sections defined over $\Q$.\footnote{By a well-known theorem of Tsen, for any  conic-bundle  over a curve there are always sections defined over  a suitable finite extension of the ground field - the present case being obvious on factoring the left side over $\Q(i)$; here we have an instance when the choice of the ground number field is relevant.}

  
\smallskip

\begin{example}\label{ex.new}
Another example of surjectivity, based on different principles, can be constructed as follows: Let us start from a plane affine curve of equation $f(x,y)=0$ admitting infinitely many integral points $(x,y)\in\Z^2$, where $f(x,y)\in\Z[x,y]$ is a polynomial such that $f(0,0)=1$.
Let $q(u,v)\in\Z[u,v]$ be the norm-form of a  quadratic ring of class number $1$ (e.g. $q(u,v)=u^2+v^2$). Let $S\subset \A^4$ be the surface
$$
S=\{(x,y,u,v)\in \A^4\, :\, f(x,y)=0,\, xy=q(u,v)\}.
$$
Finally, let $X\subset\A^6$ be the threefold defined as
$$
X=\{ (x,y,a,b,c,d)\in\A^4\, :\, f(x,y)=0,\, x=q(a,b),\, y=q(c,d)\}.
$$
Since the quadratic form $q(u,v)$ is muultiplicative, the threefold $X$ is endowed with a natural projection $X\to S$, obtained by expressing the product $q(a,b)\cdot q(c,d)$ in the form $q(u,v)$, where $u,v$ are bilinear forms in terms of $(a,b),(c,d)$. 

The fact that the coordinates $x,y$ of every integral point $(x,y,a,b)\in\ S(\Z)$  are coprime (as follows from $f(0,0)=1$) implies that the product $xy$ is representable by the binary quadratic form $q$  if and only if $x$ and $y$ are so.

\smallskip

Note that, both in this last example and in the mentioned Example 3.46 from \cite{CZ-libro}, the fibration  does admit a rational section,  although defined only over a suitable finite extension of $\Q$.
\end{example}

The Hilbert property in higher dimensions is defined similarly: given a variety $Y$ over a number field $k$ we say that a subset $Z\subset Y(k)$ is thin over $k$ if there exists a possibly reducible algebraic variety $X$ of the same dimension and a rational map $f: X\to Y$ without rational sections such that $Z\subset f(X(k))$. Whenever $Y(k)$ is not thin, we say that the variety $Y$ satisfies 
the Hilbert Property over $k$. All rational varieties do satisfy the Hilbert Property, essentially by Hilbert Irreducibility Theorem.

\medskip

A result presented in this work, turning around value sets of integral points,  concerns an issue which we raised in \cite{CZhilb}. {\it Given a smooth simply connected algebraic variety $Y$ over a number field $k$, with $Y(k)$ Zariski-dense, is it true that $Y(k)$ is not thin?} (We say also that the Hilbert Property holds for the set $Y(k)$ over $k$.) Or, at least is it true that after   a suitable extension of a the number field $k'/k$, the set of $k'$-rational poinits $Y(k')$ is not thin (over $k'$)?  

The analogous problem can be  stated for integral points. As we proved in \cite{CZhilb}, the condition that $Y$ is simply connected cannot be removed.

Note that in this situation the variety $Y$ plays the role of the line: we are implicitly considering maps $X\to Y$, where $X$ has the same dimension of $Y$ and the map is dominant without rational sections.

We shall show that

\begin{theorem}\label{T.dense-and-thin}
There exists a simply-connected smooth affine cubic surface $S\subset \A^3$ defined over $\Q$ such that the set $S(\Z)$ is Zariski-dense and thin.
\end{theorem}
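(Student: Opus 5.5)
The plan is to use the surface attached to the IMO 1988 problem, as the introduction anticipates:
$$
S:\; z(xy+1)=x^2+y^2 \subset \A^3 .
$$
Its integral points are exactly the triples $(x,y,z)\in\Z^3$ at which $(x^2+y^2)/(xy+1)$ takes the integral value $z$. The argument has three steps.

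The first step is geometry. I will check that $S$ is smooth: the gradient is $(zy-2x,\,zx-2y,\,xy+1)$, and a short computation shows it does not vanish on $S$. I will then show that $S$ is simply connected. The projection $(x,y,z)\mapsto z$ exhibits $S$ as a family of affine conics $x^2-zxy+y^2=z$. I would compactify, identify the degenerate fibres (at $z=\pm2$, where the quadratic form becomes a square, and at $z=0$), and compute $\pi_1$ by the standard fibration argument. The key input is that some degenerate fibre is simply connected, or that the singular fibres are non-multiple with a reduced component meeting appropriately, so that the monodromy and vanishing cycles kill everything. As a fallback, I would view $S$ as a cubic surface in $\P_3$ minus a hyperplane section and check that the complement of this section in the smooth (or mildly singular) projective cubic has trivial fundamental group. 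This works when the curve at infinity has a component meeting the rest transversally with multiplicity $1$: $xyz=0$ at infinity gives three lines, and the standard Zariski–van Kampen argument shows that meridians around reduced components are trivial in the complement of a rational surface.

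The second step is density via Vieta jumping. For fixed $z$, the fibre conic carries the involutions $(x,y)\mapsto(zy-x,y)$ and $(x,y)\mapsto(x,zx-y)$. Starting from $(z\cdot 0,\ldots)$, i.e.\ the points $(0,a,a^2)$ for every $a\in\Z$, these involutions produce infinitely many integral points in the fibre $z=a^2$. Since there are infinitely many such fibres, $S(\Z)$ is Zariski-dense.

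The third step, thinness, is the heart of the matter. By the IMO solution, every integral point with $xy\ge 0$ jumps down to a point with a zero coordinate, so $z$ is a perfect square. When $xy<-1$ we get $z\le 0$, and from $z(xy+1)=x^2+y^2$ with $xy+1<0$ only finitely many values of $z$ can occur; in fact I expect $z\le 0$ to force very few points. The cases $xy=-1$ and $xy=0$ are finite or handled directly. So, apart from a finite union of curves, $S(\Z)$ lies in $\{z \text{ is a square}\}$. I would therefore take the double cover $X=\{(x,y,w): w^2(xy+1)=x^2+y^2\}\to S$, $(x,y,w)\mapsto(x,y,w^2)$. This is a degree-$2$ cover, and it must have no rational section: equivalently, $z$ is not a square in $\Q(S)$, which holds because $z$ vanishes to order $1$ along the curve $z=0=x^2+y^2$ (irreducible over $\Q$). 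Then $S(\Z)\subset \pi(X(\Q))\cup(\text{proper closed subset})$, which is thin.

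The main obstacle is the simple connectivity proof, since the arithmetic steps are classical. I will attack it by computing $\pi_1$ through the conic fibration and checking the fibre over $z=0$ and the reducible fibres at $z=\pm 2$ explicitly.
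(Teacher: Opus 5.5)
\textbf{Where your proposal agrees with the paper.} Your density and thinness steps match the paper's. You use the same surface. Density comes from the infinite orbits of $(0,a)$ in the fibres $z=a^2$, $|a|\ge 2$. Thinness comes from the IMO descent plus the double cover $w^2(xy+1)=x^2+y^2$.

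\textbf{An unsupported claim in the thinness step.} For $xy<-1$, finiteness of the possible values of $z$ does not follow from the equation by size alone. For $y=-1$ the ratio is $(x^2+1)/(1-x)$, which is unbounded a priori. You need the same Vieta descent applied to $(x^2+y^2)/(xy-1)$ after $y\mapsto -y$, and this yields $z=-5$ only. That conic carries infinitely many integral points (the orbit of $(1,-2)$, e.g.\ $(9,-2)$). So ``very few points'' is wrong, though harmless, since it is a single curve.

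\textbf{The genuine gap: simple connectivity.} You only outline this step, and your fallback relies on a false principle. Meridians of reduced boundary curves in a rational surface need not be trivial: $\pi_1(\P_2\setminus Q)\cong\Z/2$ for a smooth conic $Q$, and $\pi_1(\P_2\setminus(L\cup Q))\cong\Z$ for a line and a transverse conic. Moreover, in the cubic model the boundary triangle $w=xyz=0$ passes through the node $(0:0:1:0)$.

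The fibration plan can be completed, but its actual content must be supplied:
\begin{enumerate}
\item Over $\A^1\setminus\{0,\pm2\}$ the map $z$ is a locally trivial $\C^*$-fibration. The two points at infinity $x^2-zxy+y^2=0$ stay distinct there. They collide at $z=\pm2$, so these values must be removed even though $z$ has no critical point above them.
\item The only critical point of $z$ on $S$ is the origin. Hence all fibre components are reduced with $dz\neq 0$ generically, and the loops around $0,\pm2$ lift to meridians that die in $\pi_1(S)$. It follows that $\pi_1(S)$ is generated by the image of $\pi_1(\C^*)$.
\item That generator is the vanishing cycle of the Morse point $z\approx x^2+y^2$ at the origin, which separates the two punctures. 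It is therefore trivial.
\end{enumerate}

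\textbf{The paper's route.} The paper argues differently. It views $S$ as the blow-up of $\P_2$ at the four points $x^2+y^2=xy+1=0$, minus the line at infinity $A$ and the strict transform $B$ of the conic $xy+1=0$. Removing the four exceptional lines gives $\A^2\setminus\{xy+1=0\}$, whose $\pi_1\cong\Z$ (Zariski) surjects onto $\pi_1(S)$. Cyclic covers are then excluded using $E_i\cdot A=0$, $E_i\cdot B=1$, $A^2=1$. Equivalently, each exceptional line $E_i\cap S\cong\A^1$ meets $B$ transversally once, and so kills the meridian of $B$ that generates this $\Z$. This avoids the non-proper fibration issues entirely.
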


This shows that, at least for the case of integral points, one needs to allow finite extensions of the field of definition, or at least of the involved ring of $S$-integers, in order to obtain the Hilbert Property from the Zariski-density.

A theorem of {\sc Coccia} \cite{Coccia} asserts that for such surfaces the Hilbert Property does hold  after enlarging the field of definition, or replacing the ring $\Z$ by a suitable finitely generated subring of $\Q$.

\medskip

Let us show a closer link between our Theorem \ref{T.Main} and    the Hilbert Property. 
 Thinking of the case of curves and what we have mentioned regarding thin sets, Theorem \ref{T.Main} may be viewed as a higher-dimensional extension of Hilbert Irreducibility Theorem.
 
 Indeed, consider a finite-degree rational map $\pi:A\to \A^g\subset \P_g$, where $A$ is an abelian variety of dimension $g>1$, say all defined over $\Q$. 
 We can write $\pi(x)=(f_1(x),...,f_g(x))$ for rational maps $f_i:A\to\P_1$. 
 
  Now, the Hilbert Property for $\P_g$ ensures the existence of (many) rational points $u=(u_1,\ldots ,u_g)\in\A^g(\Q)$ such that $\pi^{-1}(u)$ does not contain points in $A(\Q)$.  The present result  in particular ensures the much stronger fact that this holds for all rational $u$ in a denumerable (and complex-dense) union of affine hyperplanes. In fact, we obtain that there exist points $u_1\in\Q$ such that already $V:=f_1^{-1}(u_1)$ (which generally has codimension $1$ in $A$) does not contain points in $A(\Q)$ where $f_1$ is defined;  hence,  for all $u$ in the hyperplane of $\A_1^g$ defined by $x_1=u_1$ (which is the closure of $ \pi(V)$), $\pi^{-1}(u)$ does not contain points in $A(\Q)$.  

\medskip

\subsection{ A more detailed overview of the paper} 

Let us go back to our initial problem of investigating the value sets $f(X(k))$, where $f:X \rightarrow \P_1$ is a rational map.

{\tt Warning: about the domain of $f$}. We remark   once again that,    for simplicity of notation, we shall adopt a slight  abuse of language  and   use  the notation `$f(U)$' even for sets $U\subset X(k)$ not necessarily contained in the domain of definition of $f$;  this means that  we tacitly {\it do not consider the points of $U$ where $f$ is not defined}; i.e., we mean $f(U\cap\hbox{domain of $f$ in $X$})$.  Correspondingly, we shall usually exclude these points when we take fibers of $f$, hence by `$f^{-1}(V)$' we think of $f$ as a function on its domain in $X$.  If we want to refer to the whole fiber we can use its closure in $X$, denoted $\overline{f^{-1}(V)}$. 

\medskip

Note that the set of points where $f$ is {\it not} defined is closed in $X$, and  has  codimension $\ge 2$ as soon as $X$ is smooth. Of course we may blow-up $X$ along this set and go to a blown-up variety $\tilde X$ with a regular map $\tilde f$ to $\P_1$, which equals our $f$ on the previous domain. We shall adopt this procedure for  some arguments. 

However the regularised map might be surjective on rational points for trivial reasons, namely taking the image of the exceptional divisor.   The already mentioned example of the rational map $\A^2\to \A^1$ given by the formula $(x^2-1)/(y^2-1)$ is paradigmatic: the regularized map is trivially surjective, since the equation $(x^2-1)=m(y^2-1)$ always admits the solution $(1,1)$.
\medskip

{\tt A main question}: As above, a question that seems natural to ask  is {\it whether (almost) all   $k$-rational points of $\P_1$ are in the said image},  i.e. are  attained by $f$ at some rational point of $X(k)$ where $f$ is defined. 

Our main purpose in this paper will be to show in some natural cases that {\it many  points in $\P_1(k)$ are NOT attained by $f$ on $X(k)$}.

 This is a bit vague. But for the moment let us still  briefly discuss this in some cases.

\medskip

{\tt Local obstructions}. An easy way to provide a proof of the said expectation, i.e., a negative  answer to the question, is through local obstructions. Namely, it suffices to find a prime $p$, a place $v$ of $k$ above $p$,  and a point $\rho \in\P_1(k)$, such that $\rho\not\in f(X(k_v))$, equivalently the fiber $f^{-1}(\rho)$, i.e. the variety $f=\rho$, has no point over $k_v$; in practice, this amounts to the non-solvability of   certain congruences.

This method may well work sometimes, but there are many natural cases where it is not suitable  to work. And moreover one may   reformulate the question on restricting  the possible values to those having no local obstruction.  

\medskip

  In the main part of the paper we focus on the case when $X$ is an abelian (or semi-abelian) variety (or more generally when $f$ factors through a map from an abelian variety); in the abelian  case of course $X(k)$ is finitely generated. More generally, we may take $X$ to be a semiabelian variety and ask about the image $f(\Gamma)$ for a finitely generated subgroup $\Gamma\subset X$.  In this context we have the formidable  tool furnished by the Theorem of Faltings (former Mordell-Lang conjecture), asserting that: 

 {\it For a subvariety $Z$ of an  abelian variety $X$  and a finitely generated subgroup $\Gamma\subset X(k)$,  
 the set $Z\cap\Gamma$ is contained in a finite union of translates of abelian subvarieties}. 
 
 An equivalent and maybe more direct statement is that:  {\it The Zariski closure of any  set $\Sigma\subset \Gamma$ is a finite union of translates of abelian subvarieties}.  
 
 Vojta (to whom is due a  basic input in Faltings' method for these results) later proved an extension to semiabelian varieties, whereas the toric case was known earlier due to the work of W. Schmidt, whose Subspace Theorem was used efficiently by Evertse, Schlickewei-van der Poorten, and M. Laurent - among others - in this context.

\medskip

{\tt Weak Approximation}.  From these deep  results we shall   deduce (see Theorem \ref{T.Main2}) that  for a semiabelian variety $X/k$, the set $f(\Gamma)$ is {\it sparse}, in the sense that its complement is `large': by this here we mean that such complement not merely is not-thin, but   satisfies the (stronger) so-called 
 {\it weak-approximation} property:  for a set $A\subset \P_1(k)$, this means that $A$ is dense in $\prod_{v\not\in M_k}\P_1(k_v)$; equivalently, for every finite set $S$ of places of $k$,  any collection $\alpha_v, v\in S$, and every $\epsilon >0$, there is $x\in A$ such that $|x-\alpha_v|_v<\epsilon$ for each $v\in S$. 
 
 Recall also that the weak-approximation for a set $A\subset V(k)$ (where $V/k$ is any algebraic variety) indeed  implies that $A$ is not thin (see \cite{S}, Ch. 3, where it is indeed  shown that for being not thin  even the {\it weak-weak-approximation} suffices, namely it suffices that  the density holds for every $S$ disjoint from a prescribed finite set $S_0$ of places). 

\medskip
 
 Actually, in our context  we can even prove a kind of  {\it strong-approximation} property in $\A^1$ with respect to the ring of integers $\O_k$ of $k$. Let $\O_k$ (resp. $\O_{v,k}$) denote the ring of integers of $k$ (resp. $k_v$). Then, {\it  in this paper} this property, for a set $A\subset \A^1(\O_k)=\O_k$, means that $A$ is dense in $\prod_{v\in M^0_{k}}\O_{v,k}$, where $M^0_{k}$ is the set of finite places of $k$.\footnote{We take this meaning for avoiding the truble of introducing further definitions; the result may be strengthened as in the usual meaning of strong approximation.}
 
And we can prove this sparseness even on removing from $\P_1(k)$ not merely one but an arbitrary  finite number of such  images (as in the theory of thin sets). We have:

\begin{theorem} \label{T.1} For $i=1,\ldots, m$, let $X_i/k$ be a semiabelian variety with a rational map  $f_i:X_i\to \P_1$,    defined over the number field $k$. Let further $\Gamma_i$ be a finitely generated subgroup of $X_i$. Then the complement $\P_1(k)-\bigcup_{i=1}^mf_i(\Gamma_i)$ has the weak-approximation property over $k$.

In particular, such complement is not thin over $k$.

\end{theorem}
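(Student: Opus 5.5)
The plan is to produce, inside any prescribed adelic neighbourhood, an explicit infinite sequence of elements of $k$ that meets $\bigcup_i f_i(\Gamma_i)$ in only finitely many terms. Fix a finite set $S$ of places of $k$, targets $\alpha_v\in\P_1(k_v)$ for $v\in S$, and $\epsilon>0$. After a Möbius change of coordinate on $\P_1$ over $k$ (which just replaces each $f_i$ by another rational function), we may assume every $\alpha_v$ lies in $k_v$. By weak approximation for $k$, we choose $a\in k$ with $|a-\alpha_v|_v<\epsilon/2$ for all $v\in S$. We also choose $b\in k^*$ with $|b|_v<\epsilon/2$ at the finite places of $S$ and $|b|_v\le 1$ elsewhere in $S$. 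Then we take a rational prime $p$ not lying under any place of $S$ and consider
$$
t_n:=a+b\,p^{-n},\qquad n\ge 1 .
$$
At each finite $v\in S$ we have $|p^{-n}|_v=1$, and at each archimedean $v$ we have $|p^{-n}|_v\to 0$. Hence for large $n$ the point $t_n$ lies in the given neighbourhood. The statement then reduces to the following claim: for suitable $p$, only finitely many $n$ satisfy $t_n\in f_i(\Gamma_i)$ for some $i$.

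To prove the claim, fix $i$ and work on the semiabelian variety $Y=X_i\times\G_{\rm m}$ with the finitely generated group $\Gamma=\Gamma_i\times\langle p\rangle$. Let $Z\subset Y$ be the Zariski closure of $\{(x,z): f_i(x)=a+bz^{-1}\}$, with $x$ ranging over the domain of $f_i$. Each $n$ with $t_n=f_i(\gamma)$ gives a point $(\gamma,p^n)\in Z\cap\Gamma$. By the Faltings--Vojta theorem (Mordell--Lang for semiabelian varieties) recalled above, $Z\cap\Gamma$ lies in finitely many translates $y_j+B_j\subset Z$ of semiabelian subvarieties $B_j$. If $B_j$ projects trivially to $\G_{\rm m}$, then the corresponding coset has constant $z$-coordinate and contributes at most one $n$. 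So it suffices to rule out cosets whose projection $\pi_2:y_j+B_j\to\G_{\rm m}$ is surjective, or at least rule out that such a coset carries infinitely many of our points.

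This is the step I expect to be the main obstacle. Suppose $y_j+B_j\subset Z$ contains $(\gamma_n,p^n)$ for infinitely many $n$. Then $\pi_2$ restricted to $B_j$ is a nontrivial homomorphism $B_j\to\G_{\rm m}$, so up to the translate it is a character. Its kernel has codimension one in $B_j$, and along the torus part of $B_j$ the character factors through a character of the maximal subtorus. Along the coset we have $f_i(x)=a+b z^{-1}$ identically, so $z$ is determined by $x$ through $f_i$. Hence the values $p^n=\pi_2(\gamma_n,p^n)$ lie in $\pi_2(y_j)\cdot\chi(\Gamma_i')$, where $\Gamma_i'$ is the projection of $\Gamma\cap(y_j-y_j+B_j)$ to $X_i$ and $\chi$ is a character defined over $k$ (after finite extension if necessary) on the torus quotient. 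The image of $\Gamma_i$ under all such characters generates a finitely generated subgroup $G\subset \bar k^*$.

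The point is therefore to choose $p$ so that $p^m\notin G\cdot\pi_2(y_j)^{\pm1}$ for all $m\ne 0$, up to torsion. Since $G$ and the finitely many $y_j$ involve only finitely many primes of $k$ (or of a fixed finite extension), any rational prime $p$ outside this finite set works. We must check that this finite set depends only on $(X_i,f_i,\Gamma_i)$ and not on $a$ or $b$. If the Mordell--Lang components depended on $a,b$, I would instead apply the argument once to the family over the parameters. The simpler route is to note that the characters arise from the torus part of $X_i$ and $\Gamma_i$ only, which is exactly the uniformity claimed in Theorem~\ref{Addendum}.

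With such $p$, each $i$ contributes finitely many $n$, so infinitely many $t_n$ lie in the neighbourhood and outside $\bigcup_i f_i(\Gamma_i)$. This gives weak approximation. Non-thinness then follows from the criterion in \cite{S}, Ch.~3, that sets with weak approximation are not thin.
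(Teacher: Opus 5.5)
Your reduction is fine: the Möbius change, the choice of $a,b$ fixed in advance, and the sequence $t_n=a+bp^{-n}$ all work, and the sequence even handles archimedean places. So everything rests on one claim: for a suitable prime $p$, only finitely many $n$ give $t_n\in f_i(\Gamma_i)$. That claim is not proved.

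The cosets $y_j+B_j$ produced by Faltings--Vojta are those covering $Z\cap\Gamma$ with $\Gamma=\Gamma_i\times\langle p\rangle$. So the cosets, the characters on them, the group $G$ and the points $y_j$ all depend on $p$, and you cannot afterwards choose $p$ to avoid the primes they involve. Your own condition exposes the circularity: the base point can be taken in $\Gamma$, so $\pi_2(y_j)$ is itself a power of $p$.

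The dependence on $a,b$ that you worry about is harmless, since $a,b$ are fixed before $p$. Your sentence that the characters arise ``from the torus part of $X_i$ and $\Gamma_i$ only'' states the needed uniformity rather than proving it. Invoking Theorem~\ref{Addendum} only relocates the problem: its first assertion, applied to $b/(f_i-a)$, is exactly your claim, and the paper justifies it by the same kind of argument.

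The missing ingredient is a bound on the bad primes that does not depend on the subgroup $B$. One way to get it:
\begin{itemize}
\item Let $S_0$ be a finite set of places such that $\Gamma_i$ is relatively compact in $X_i(k_v)$ for every finite $v\notin S_0$ (for instance $\Gamma_i\subset\mathcal X_i(\O_{S_0})$ for a model $\mathcal X_i$). Take $p$ below no place of $S_0$.
\item If infinitely many $n$ occurred, some coset $y+B\subset Z$ would contain $(\gamma_n,p^n)$ and $(\gamma_m,p^m)$ with $n\ne m$. Then $(\gamma_n-\gamma_m,p^{n-m})\in B$.
\item On $Z$ above the domain of $f_i$ we have $f_i(x)=a+bz^{-1}$, and $b\neq 0$. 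Hence $B\cap(\{0\}\times\G_{\rm m})$ is finite, say contained in $\mu_N$. So $z^N=\psi(x)$ on $B$ for a character $\psi$ of $B'=\mathrm{pr}_1(B)$.
\item Let $K$ be a field of definition of $B$ and $\psi$, and $\mathfrak P\mid p$ a place of $K$. Then $\psi$ maps the compact group $\overline{\Gamma_i}\cap B'(K_{\mathfrak P})$ into a compact subgroup of $K_{\mathfrak P}^*$, i.e.\ into the units. This contradicts $\psi(\gamma_n-\gamma_m)=p^{N(n-m)}$.
\end{itemize}
Here $S_0$ depends only on $(X_i,\Gamma_i)$, not on $B$, $a$, $b$ or $p$, which is exactly what breaks the circularity.

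The paper avoids the issue altogether. Its auxiliary group is an elliptic curve $E$ of positive rank, not isogenous to any factor of $X$. Then the connected subgroups of $X\times E$ are products, so no mixed cosets occur. Approximation comes from $\pi(nu_0)\to\pi(0_E)$ for highly divisible $n$.
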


 A special case of this result, containing the basic principles of the present proof, already appeared (in weaker shape)  in the book \cite{CZ-libro}: see Thm. 3.47. 
 (The fact that the complement is not thin however was not mentioned, but could be proved directly on expanding just by a few words the argument of the proof.)  
 
 \medskip

 \begin{remark} We point out that the proof leads to a somewhat more general result, as may be easily seen by inspection, and indeed will be remarked through the proof. Namely, omitting the index $i$ for clarity, $f(\Gamma)$ can be replaced by a set obtained as follows. Take a closed hypersurface $H\subset X\times \P_1$ and remove from it the sets $\{x\}\times \P_1$ entirely contained in it, obtaining $H^*$. Then, in place of $f(\Gamma)$ we may consider the set $\pi_2(H^*(k))$.  
 
 The above statement is obtained by taking as $H$ the closure of the graph of $f$. We have preferred to formulate only the present result for simplicity and brevity. 
 
 \end{remark}

\subsection{ Curves with precisely one smooth rational point (Proof of Proposition \ref{Prop.1-solo})}   
 By the same  simple  idea  for the proof of Theorem \ref{T.1}, other applications which seem not to be obvious at first sight. For instance, we now show  the existence of pencils of curves, {\it without base point}, such that infinitely many of them have exactly one smooth rational point (Proposition \ref{Prop.1-solo}).



Note that it is easy to construct infinitely many such curves starting with an abelian variety having just one rational point, and taking a pencil of curves passing through that point. However, all these curves will have a same rational point, while we aim at producing examples where the (unique) rational point does depend on the curve (hence the hypothesis on the absense of base points for the pencil).

\smallskip

To sketch a proof of  Proposition \ref{Prop.1-solo}, let $A/\Q$ as above be an  abelian surface with trivial endomorphism ring and Zariski-dense set of rational points. Let also  $f:A\to\P_1$ be a nonconstant rational map defined over $\Q$, constructed as in the opening remarks of this example. So we also suppose that the points of indeterminacy of $f$ are not rational, and that $f$ does not factor through another map $\P_1\to\P_1$ of degree $>1$. 

 By taking $f$ sufficiently general we may also achieve that it 
does not satisfy any nontrivial  invariance $f(q\pm x)=f(x)$. In fact, in the first place the above defined divisor  $D$ may be invariant only for finitely many translations. Therefore, it first suffices to exclude that $f$ is invariant by these translations; in turn, this follows from the fact that, since $pD$ is very ample for large $p$, the functions in the linear system separate points, thus the ones invariant by a given translation form a proper sub-vector space of $L(pD)$. Similarly for excluding that $f$ obeys the other invariance (with the minus sign). 

Let us now consider the rational points on the subvariety $X\subset A\times A$ defined by taking the closure of  $f(x)=f(y)$ in $A$. By   Faltings' Theorem   they all lie on finitely many translates of abelian subvarieties of $A\times A$ contained in $X$, so let us analyse these translates.

By our hypothesis on $A$, the only nontrivial abelian subvarieties of $A\times A$ are defined by $A_{m,n}=\{(x,y)\in A^2: mx=ny\}$, where $m,n$ are coprime integers. Suppose that  $X$ contains a translate $t+A_{m,n}$. This happens when $mn=0$: indeed, e.g. $X$ contains $s\times A$ and $A\times s$ for every point $s\in A$ where $f$ is not defined (and only for such $s$). Suppose now $mn\neq 0$. Then $f$ must be  clearly invariant by translation by $m$-torsion or $n$-torsion points, hence by $mn$-torsion points. 
By taking $f$ sufficiently general as above,   we may suppose that $m=n=1$ and that $X$ contains only the diagonal among the said translates.


Take now as our curves $C_t$ the closures of the fibers $f^{-1}(t)$. If $x,y\in C_t(\Q)$ then $t\in\Q$ and either $f$ is not defined at $x$, or $y$, which we are excluding, or $f(x)=f(y)$. So  $(x,y)\in X(\Q)$. But this yields $x=y$, as required. Note that since $A(\Q)$ is Zariski-dense, we obtain in fact infinitely many such curves $C_t$ which (if again $f$ is chosen in a `general' way) are smooth outside the points of indeterminacy of $f$, and having one rational point (and hence exactly one). 

\smallskip


\section{Value  sets and thin sets} 

We recall some definitions and simple properties of value sets.

We defined value sets in an algebraic variety $Y$ over a number field $k$ as images of sets of $k$-rational points of another irreducible variety   $X$ under a morphism.

The next result shows that every finite set is a value set:

 \begin{proposition}\label{P.finite}
 Let $Y$ be an irreducible quasi-projective algebraic variety defined over a number field $k$. Let $F\subset Y(k)$ be a finite set. There exists an irreducible algebraic variety $X$ of the same dimension as $Y$ and a dominant rational map $\pi: X\to Y$, both defined over $k$, such that  $F=\pi(X(k))$.
\end{proposition}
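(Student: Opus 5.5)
The plan is to construct $X$ in two steps. First I would build an irreducible $k$-variety $X'$ of dimension $n=\dim Y$ with $X'(k)=\emptyset$ and a dominant map $\pi':X'\to Y$ whose image contains $F$. Then I would modify $X'$ by \emph{pinching}: above each $p\in F$, I identify a finite Galois-stable set of geometric points into a single new $k$-rational point lying over $p$. The resulting variety $X$ is still irreducible, its rational points are exactly these new points, and they map bijectively onto $F$.

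\emph{Step 1 (a pointless source).} Choose $a,b\in k^*$ such that the affine conic $C: ax^2+by^2=1$ has no $k_v$-point for some place $v$. For instance, take $a,b$ so that the quaternion algebra $(a,b)_k$ ramifies at some finite place; this is always possible over a number field. Then $C(k)=\emptyset$. Put $W=C\times \A^{n-1}$, so that $W(k)=\emptyset$. The map $\phi:W\to \A^n$, $(x,y,z)\mapsto (x,z)$, is finite and surjective on geometric points, since for every $x_0$ the equation $by^2=1-ax_0^2$ is solvable in $\bar k$. Next, replace $Y$ by an affine open subset $Y_0\subset Y$ containing the finite set $F$; such an open set exists because $Y$ is quasi-projective. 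By Noether normalization there is a finite surjective morphism $\nu:Y_0\to \A^n$ defined over $k$. Form the fibre product $W\times_{\A^n}Y_0$, which is finite and surjective over $Y_0$, and let $X'$ be a $k$-irreducible component of it that dominates $Y_0$. Since the projection is finite, the image of $X'$ is closed and dominant, so $\pi':X'\to Y_0$ is surjective on geometric points; in particular every $p\in F$ has a nonempty fibre. Moreover $X'(k)=\emptyset$, because any rational point would project to a point of $W(k)$.

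\emph{Step 2 (pinching).} For each $p\in F$, pick a closed point $q_p\in X'$ lying over $p$, and let $Z_p\subset X'$ be the reduced zero-dimensional closed subscheme corresponding to it, i.e.\ the Galois orbit of a geometric point over $p$. Let $Z=\bigsqcup_p Z_p$, and let $X$ be the pushout $X'\sqcup_Z \bigl(\bigsqcup_{p\in F}\operatorname{Spec}k\bigr)$, in which each $Z_p$ is collapsed to one $k$-point $x_p$. This pushout exists as a quasi-projective $k$-variety: one can work on an affine neighbourhood of the finite set $Z$ and take the subring of functions whose restriction to each $Z_p$ is constant in $k$, by the Ferrand pinching construction. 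The map $X'\to X$ is finite and birational, so $X$ is irreducible of dimension $n$. Since $\pi'(Z_p)=\{p\}$, with $p$ a $k$-point, the morphism $\pi'$ factors through a morphism $\pi:X\to Y$, which is still dominant. The $k$-points of $X$ are the images of $k$-points of $X'$, of which there are none, together with the new points $x_p$; hence $\pi(X(k))=F$. If $F=\emptyset$, one simply takes $X=X'$.

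\emph{Expected obstacle.} The delicate step is Step 2. I need to check three things: that the pinched scheme is genuinely a quasi-projective variety over $k$; that it is irreducible, even though $X'$ may be irreducible over $k$ without being geometrically irreducible, which the statement allows; and that no extra rational points are created. The last point holds because $X'\setminus Z\to X\setminus\{x_p\}$ is an isomorphism. If the pushout formalism proves awkward, my fallback is to replace the pinching by a birational modification. Concretely, I would take a curve $\Gamma_p\subset X'$ through a point of $Z_p$ and glue in a rational curve meeting it. This risks spoiling irreducibility, however, so I would first try the Ferrand pinching.
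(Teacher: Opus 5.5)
Your proof is correct, and it takes a genuinely different route from the paper's.

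\textbf{The paper's route.} The paper proves the more general Proposition~\ref{P.finite2} with a single device. On an affine normal $Y$ it cuts out the target set by functions $f_1,\dots,f_k$ with reduced zero divisors. It then takes the integral closure of the compositum of the covers $P(u_i,f_i)=0$, where $P(u,v)=0$ is a plane curve whose only $k$-point is the smooth point $(0,0)$. This one construction both forbids rational points away from the target set and supplies them over it, via $u_i=0$. The projective case is reduced to the affine one by deleting a hypersurface without $k$-points.

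\textbf{Your route.} You decouple the two tasks. A conic with a local obstruction, pulled back along a Noether normalization of an affine $Y_0\supset F$, gives a finite cover $X'\to Y_0$ with $X'(k)=\emptyset$. The prescribed points are then created by pure commutative algebra.

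Your Step~2 is less delicate than you expect. Since $X'=\operatorname{Spec}B$ is affine, take $A=\{f\in B: f(q_p)\in k \text{ for all } p\}$. Then:
\begin{itemize}
\item $A$ contains the ideal $I$ of $Z$, so $B=A+\sum_i Ab_i$ for lifts $b_i$ of generators of $B/I$. Hence $B$ is a finite $A$-module, and $A$ is finitely generated by Artin--Tate.
\item $A$ is a domain of dimension $n$, since $B$ is a domain integral over $A$.
\item $A/I\cong k^{|F|}$ by the Chinese remainder theorem in $B$.
\item $A_s=B_s$ for every $s\in I$, which gives $X\setminus\{x_p\}\cong X'\setminus Z$.
\end{itemize}

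\textbf{What each approach buys.} Your only arithmetic input is a pointless conic. The paper instead needs a plane curve with a single, smooth rational point, and its existence over an arbitrary $k$ requires a curve with finitely many rational points; the paper simply assumes this. You need no hypotheses on normality of $Y$ or reducedness of divisors, and no separate projective case. The inclusion $F\subseteq\pi(X(k))$ is built into your construction. In the paper it depends on the rational point over $u_i=0$ surviving the normalization.

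The cost is that your $X$ is non-normal at the points $x_p$ and only $k$-irreducible. The paper is after a normal, absolutely irreducible cover, as in Proposition~\ref{P.finite2}. For the statement at hand neither matters. Your Step~1 needs $n\ge 1$, but the case $\dim Y=0$ is trivial.
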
  

Finite sets in an algebraic variety are just algebraic subvarieties of dimension zero. The above results extends to the following one:

\begin{proposition}\label{P.finite2}
Let $Y$ be an absolutely irreducible quasi-projective algebraic variety defined over a number field $k$ and let $Z\subset Y$ be a closed algebraic sub-variety defined over $k$. Then there exists an absolutely irreducible algebraic variety $X$ of the same dimension as $Y$ and a dominant rational map $\pi: X\to Y$, both defined over $k$, such that  $Z(k)=\pi(X(k))$.
\end{proposition}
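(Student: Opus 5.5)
The plan is to realize $X$ as a cyclic cover of $Y$ of prime degree $d$, branched along a hypersurface that contains $Z$. The cover should acquire rational points only over points of $Z$, and we will block rational points elsewhere with a norm obstruction. If $Z=Y$ we simply take $X=Y$ and $\pi$ the identity, so assume $Z\neq Y$.

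\emph{Step 1: a form vanishing at rational points exactly on $Z$.} Fix a projective embedding $Y\subset\P_N$ over $k$. Choose homogeneous forms $g_1,\dots,g_d$ over $k$, all of the same degree $e$, such that $Z$ is the common zero locus of the $g_i$ on $Y$. Repeating generators if necessary allows any prime $d$ larger than the number of generators. Fix a cyclic extension $L/k$ of degree $d$ with a $k$-basis $\omega_1,\dots,\omega_d$, and let $\mathcal N(u_1,\dots,u_d)=N_{L/k}(\sum u_i\omega_i)$ be its norm form. Since $\mathcal N$ is anisotropic over $k$, the form
$$\Phi:=\mathcal N(g_1,\dots,g_d)$$
of degree $de$ has the following property: for $y\in Y(k)$ we have $\Phi(y)=0$ if and only if $y\in Z(k)$. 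Moreover, for $y\notin Z(k)$ the value $\Phi(y)$ is a nonzero norm from $L$. This holds up to the $d$-th power $\lambda^{de}$ coming from rescaling homogeneous coordinates, and $\lambda^{de}=N_{L/k}(\lambda^e)$ is itself a norm.

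\emph{Step 2: the cover.} Pick $c\in k^*$ which is not a norm from $L^*$. Such $c$ exists, for instance by a local obstruction at a prime inert in $L$, since $N_{L/k}(L^*)\neq k^*$ for cyclic $L$. Let $X$ be the subvariety of the total space of the line bundle $\O_Y(e)$ defined by
$$c\,s^d=\Phi(y),$$
where $s$ is the fibre coordinate, and let $\pi$ be the projection to $Y$. This is well defined because $s^d$ and $\Phi$ are both sections of $\O(de)$. The map $\pi$ is finite of degree $d$, hence dominant, and $\dim X=\dim Y$.

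Next we check that $\pi(X(k))=Z(k)$. If $(y,s)\in X(k)$ with $y\notin Z$, then $\Phi(y)\neq0$, so $s\ne0$. Then $c=\Phi(y)/s^d=\Phi(y)\,N_{L/k}(s)^{-1}$ is a norm, which is a contradiction. Conversely, every $y\in Z(k)$ lifts to the rational point $(y,0)$. The locus of non-definition is irrelevant here, since $\pi$ is a morphism.

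\emph{Step 3 (main obstacle): absolute irreducibility.} Since $d$ is prime, $X$ is absolutely irreducible as soon as $\Phi/c$ is not a $d$-th power in $\bar k(Y)$. In particular it suffices that the divisor of $\Phi$ on $Y_{\bar k}$ has some component of multiplicity prime to $d$. Over $\bar k$, $\Phi$ factors as $\prod_\sigma\bigl(\sum_i\sigma(\omega_i)g_i\bigr)$, a product of $d$ conjugate forms. I would therefore exploit the freedom in choosing the generators. Add to $g_1$ a general form $h$ of degree $e$ that vanishes on $Z$, chosen by a Bertini-type argument so that each linear combination $\sum_i\sigma(\omega_i)g_i$ has a reduced component on $Y_{\bar k}$ not shared with its conjugates. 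Then $\Phi$ has a component of multiplicity $1$, hence prime to $d$, and $X$ is absolutely irreducible. This genericity argument is the step needing the most care, especially when $Y$ is singular. There one can first restrict to the smooth locus, or pass to a normalization, and check multiplicity along a divisor meeting it.
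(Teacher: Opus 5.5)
Your argument is sound in outline, but it is a genuinely different construction from the paper's. The paper first treats affine $Y$. It writes $Z=\{f_1=\dots=f_m=0\}$ with regular functions whose zero divisors are reduced, and fixes a plane curve $P(u,v)=0$ whose only $k$-point is the smooth origin. It then takes the normalized compositum of the covers $P(u_i,f_i)=0$, so a $k$-point upstairs forces $f_i(y)=0$ for all $i$. The general case is reduced to the affine one by deleting a hypersurface $W\subset Y$ with no $k$-points and not containing $Z$.

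You instead fold all the equations into one, via the anisotropic norm form of a cyclic extension $L/k$ of prime degree, and use a non-norm $c$ as the obstruction. This gives a single cyclic cover $cs^d=\Phi$, finite over all of $Y$. What this buys you:
\begin{itemize}
\item no affine reduction and no auxiliary $W$;
\item elementary class field theory (a non-norm via an inert prime) in place of an input the paper takes for granted, namely a curve over an arbitrary number field with exactly one rational point;
\item a checkable criterion for absolute irreducibility (a component of $\mathrm{div}\,\Phi$ of multiplicity prime to $d$), whereas the paper only asserts the compositum can be made absolutely irreducible ``with little effort''.
\end{itemize}
The paper's construction is in turn more modular: any curve with a single rational point works, and the $f_i$ are treated independently.

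Step 3 needs tightening in two places. First, replacing $g_1$ by $g_1+h$ can enlarge the common zero locus beyond $Z$, which destroys Step 1. To avoid this, let $g_1$ also occupy another slot, or put $h$ into a padding slot.

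Second, you must take $e$ large enough (and $\dim Y\ge 1$) that the space $V$ of degree-$e$ forms vanishing on $Z$ restricts to a space of dimension at least $2$ on $Y$. Otherwise the argument fails: for $Y=\P_1$, $Z=\{(0:1)\}$, $e=1$, every $\ell_\sigma:=\sum_i\sigma(\omega_i)g_i$ is a multiple of $x_0$. Then $\Phi$ is a constant times $x_0^{d}$, and $X$ is geometrically reducible whatever $h$ you add. (For $\dim Y=0$, $Z=\emptyset$ the statement itself fails, since an absolutely irreducible $0$-dimensional $k$-variety is a $k$-point.)

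With these fixes the Bertini step can be made precise.
\begin{itemize}
\item For general $h\in V(k)$, each $\ell_\sigma$ is a general member of $V\otimes\bar k$, being a translate by a fixed element. So on the normalization of the closure of $Y$ its divisor is $F+M_\sigma$, where $F$ is the fixed part and $M_\sigma\neq 0$ is reduced and shares no component with $F$.
\item The form $D_{\sigma\tau}:=\ell_\sigma/\sigma(\omega_1)-\ell_\tau/\tau(\omega_1)$ does not depend on $h$. It is nonzero on $Y$ because $[L:k]$ is prime and some $g_i$ with $i\ge 2$ is nonzero on $Y$.
\item A component of $M_\sigma$ that also occurs in $\mathrm{div}\,\ell_\tau$ lies in $\mathrm{div}\,D_{\sigma\tau}$. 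There are only finitely many such prime divisors, and a general $h$ keeps all of them out of $M_\sigma$.
\end{itemize}
Hence every component $P$ of $M_\sigma$ has $\mathrm{ord}_P(\Phi)=1$, so $\Phi/x_0^{de}$ is not a $d$-th power in $\bar k(Y)$, as required.
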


\begin{proof}
Let us start from the case $Y$ is an affine normal variety. Let $Z$ be defined by a system of equations $f_1=\ldots=f_k=0$, where for each $i=1,\ldots,k$, $f_i$ is a regular function on $Y$. Suppose also that the zero divisor of each function $f_i$ is reduced. 

Choose one plane algebraic curve  over $k$ whose only rational point is the origin and such that this point is smooth; let $P(u,v)=0$ be its equation, for a polynomial $P(u,v)\in k[u,v]$. Consider the compositum of the algebraic covers of $Y$ given by the equations $P(u,f_i(x))=0$, and note that in view of our assumption on the zero set of the $f_i$, each such cover is absolutely irreducible. In  terms of function fields, we add to the ring $k[Y]$ all the algebraic functions $u_i$ satisfying the equations $P(u_i,f_i)=0$, and then we take its integral closure. With little effort, we can obtain a compositum which is still absolutely irreducible; if this were not the case, one can just modify the curve $P(u,v)=0$, taking possibly a different curve for each function $f_i$. 

We finally obtain a variety $X$ endowed with a map $X\to Y$. Its rational points correspond to the values of $y\in Y$ such that each of the equations $P(u,f_i(y))=0$ admits a rational solution $u=u_i\in k$. Due to our assumption, this can happen only if $f_i(y)=0$ for all $i$, i.e. if $y\in Z$, as wanted.

The theorem is then proved for affine varieties.

Let us treat now the projective case (whenever $Y$ is neither projective nor affine, the problem can be easly reduced to the projective case).

One can always find a hypersurface $W\subset Y$ defined over $k$, not containing the given subvariety $Z$,  such that $Y \setminus  W$ is affine and $W$ does not contain any $k$-rational point: indeed, just embed $Y\hookrightarrow \P_N$ in some projective space and then find a hypersurface $W'$ with this property in $\P_N$ (for instance using local conditions to exclude the presence of $k$-rational points); the intersection $W'\cap Y$ will be the required hypersurface $W$. 

Now, consider the affine variety $Y\setminus W$. The cover $X\to (Y\setminus W)$ constructed above can be completed to a morphism $\bar{X}\to Y$, where $\bar{X}$ is a suitable completion of $X$. The rational points on $X$ will all lie above $Z$.

\end{proof}

 \medskip
  
{\tt Thin sets}. Let $X$ be an absolutely irreducible algebraic variety over a number field $k$. Whenever a set of rational points $Z\subset X(k)$ is contained in a value set $Z'\subset X(k)$ which  can be obtained as an image of a finite union of morphisms $\pi_i:Y_i\to X$, $i=1,\ldots,n$, where for each $i=1,\ldots,n$,  $Y_i$ is an irreducible variety of the same dimension as $X$ and $\pi_i:Y_i\to X$ is a dominant map of degree $>1$, we say that the value set $Z$ is {\it thin}.

By what we proved in Proposition \ref{P.finite2}, if $Z$ is not Zariski-dense then it is thin. Also, finite union of thin sets are thin.

Sometimes, one defines thin sets as those contained in an image $\pi(Y(k))$, for a single morphism $\pi:Y\to X$ where $Y$ is no more  supposed to be irreducible. In that case the condition on $\pi$ is that it admits no rational section.




\section{Heights and Vojta's Conjecture}\label{S.H} 

In this section we exploit in a little more detail the viewpoint sketched in the Introduction, i.e., that one could prove the sparseness of values by proving first that the height of the values grows fast. Note that lower bounds for the heights proved crucial in several Diophantine problems; recent general results have been provided by {\sc Dimitrov-Gao-Habegger} \cite{DGH} and by {\sc Yuan-Zhang} \cite{YuZh} which admitted applications e.g. in the context of uniform versions of Mordell's conjecture.

In general, it is very difficult to achieve strong lower bounds, and we shall merely give an example relying on a very deep conjecture of {\sc Vojta}.

Then we shall also give a few results holding unconditionally.  

\medskip

Let $h$ denote the Weil height on $\P_1$, hence the height with respect to the point at infinity for the standard coordinates.

We start with an example relying on Vojta's conjectures, and for definiteness let us take an abelian surface $A$ over a number field $k$, assumed to be with trivial endomorhism ring (the generic case) and with a rational map $f:A\to\P_1$. 

Of course the fibers of $f$ will be curves, and hence we  have not bounded heights for their algebraic points, whence no lower bound for $h(f(x))$ in terms of $h(x)$ can   hold. Nevertheless one would hope that such a lower bound would still hold restricting the points $x\in A(\overline\Q)$, for instance taking merely those over a given number field $k$. Let us see what we can obtain from the usual functorial properties. 

We may assume that $f$ is regular outside a finite subset $\Phi$ of $A(\overline\Q)$. Let $\pi: \tilde A\to A$ be a suitable blow-up variety of $A$ above $\Phi$ such that there is a morphism $\tilde f:\tilde A\to \P_1$  with $\tilde f= f\circ \pi$ outside $\pi^{-1}(\Phi)$. Let also $E$  denote the exceptional divisor (which might be reducible).

 By the basic  functorial property of heights, we have, for $x\in A-\Phi$ and $\tilde x=\pi^{-1}(x)\in\tilde A$, 
\begin{equation*}
h(f(x))=h(\tilde f(\tilde x))=h_{{\tilde f}^*(\infty)}(\tilde x).
\end{equation*}
Hence we are let to investigate the divisor ${\tilde f}^*(\infty)$ of $\tilde A$.  By general theory, this is linearly equivalent to the sum {\it  (strict transform of fibre of $f$)$+E$}.  In turn, this is equivalent to $\pi^*(\hbox{fibre of $f$}) +$ linear combination   of components of $E$.  

A  divisor  class of the shape $\pi^*(\hbox{fibre of $f$}) $ equals the sum of an ample divisor class $B$ and another linear combination of components of $E$. 

A conjecture of Vojta (see \cite{BG}, Conj. 14.3.2, p. 483)  implies, for the points over a given number field $k$, 
\begin{equation*}
h_{K_{\tilde A}}\le \epsilon h_B+O(1),
\end{equation*}
outside a proper subvariety of $\tilde A$, where $K_{\tilde A}$ is the canonical class. But this is equal also to a sum of components of $E$ (since $K_A=0$).  We deduce
\begin{equation*}
 h_{{\tilde f}^*(\infty)}(\tilde x)\ge (1-\epsilon)h_B(\tilde x)+O(1),
\end{equation*}
again outside a proper subvariety of $\tilde A$, and finally we obtain the following

\begin{proposition} Let   $f:A\to \P_1$ be rational map from an abelian surface $A$, defined over a number field. Under Vojta's Conjectures, for every $\epsilon>0$ there exists a Zariski-dense set $U\subset A$ such that for all $x\in U(k)$  
\begin{equation}\label{E.H} 
 h(f(x))\ge (1-\epsilon)\hat h(x)+O(1),
\end{equation}
where $\hat h$ is a canonical height on $A$. 
\end{proposition}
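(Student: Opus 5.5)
We follow the functorial route sketched just above the statement and make each step precise. First, take $\Phi\subset A(\overline\Q)$ to be the finite indeterminacy locus of $f$. Let $\pi:\tilde A\to A$ be a composition of point blow-ups over $\Phi$ such that $\tilde f=f\circ\pi$ extends to a morphism $\tilde A\to\P_1$. Write $E=\sum_j E_j$ for the exceptional curves.

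The exact sequence of Picard groups for blow-ups gives an identification of divisor classes
\begin{equation*}
\tilde f^*(\infty)\sim \pi^*F-\sum_j a_jE_j ,
\end{equation*}
where $F$ is the closure of a fibre of $f$ on $A$ and the $a_j\ge 0$ record multiplicities at the base points. Since $F$ is effective and moves in a base-point-free-off-$\Phi$ pencil on an abelian surface, it is nef with $F^2>0$, hence ample. After replacing $F$ by $B$ in the normalization we will use, $h_{\pi^*F}=h_F\circ\pi$, and $h_F=\hat h_F+O(1)$ for a canonical height $\hat h$ attached to $F$ (more precisely, to its symmetrization). So
\begin{equation*}
h(f(x))=h_{\pi^*F}(\tilde x)-\sum_j a_j h_{E_j}(\tilde x)+O(1).
\end{equation*}
It therefore suffices to show that $\sum_j a_jh_{E_j}(\tilde x)\le\epsilon\,\hat h(x)+O(1)$ for rational points outside a proper closed subset.

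For this we use Vojta's conjecture on $\tilde A$, with empty boundary divisor, applied to $k$-rational points. It gives $h_{K_{\tilde A}}(\tilde x)\le\epsilon h_{\tilde B}(\tilde x)+O(1)$ off a proper Zariski-closed set $Z_\epsilon$, where $\tilde B$ is any ample class on $\tilde A$. Since $K_A=0$, we have $K_{\tilde A}=\sum_j c_jE_j$ with all $c_j\ge1$ for iterated point blow-ups. Each $E_j$ is effective, so $h_{E_j}\ge O(1)$ away from $E_j$. Hence the conjecture bounds $h_{E_j}\le C\epsilon h_{\tilde B}+O(1)$ for each $j$ separately, with $C$ depending only on the $c_j$.

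We take $\tilde B=m\pi^*F-E$ with $m$ large, which is ample. Then $h_{\tilde B}\le m\,h_F\circ\pi+O(1)$, up to the nonnegative correction $-h_E$. Rescaling $\epsilon$ by the constants $m$, $C$, $\max a_j$ then yields \eqref{E.H} on $U=A\setminus(\pi(Z_\epsilon)\cup\Phi)$, which is a Zariski-open, hence dense, set.

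The main obstacle is the bookkeeping of the exceptional components. For iterated blow-ups, $K_{\tilde A}$ involves the total transforms with coefficients that must be checked to be positive on every component appearing with $a_j>0$ in $\tilde f^*(\infty)$, so that the Vojta inequality really controls each $h_{E_j}$. If that fails, we would apply the conjecture on a further blow-up where all relevant components have positive discrepancy. Everything else reduces to standard functoriality of heights and the comparison $h_F=\hat h+O(1)$.
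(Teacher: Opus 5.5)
Your proof is correct and takes essentially the paper's route: blow up the indeterminacy locus and write $\tilde f^*(\infty)\sim\pi^*F-\sum_j a_jE_j$, then combine Vojta's inequality $h_{K_{\tilde A}}\le\epsilon h_{\tilde B}+O(1)$ with $K_{\tilde A}=\sum_j c_jE_j$, $c_j\ge1$, to make every $h_{E_j}$ negligible off a proper closed set; you do the exceptional bookkeeping more explicitly than the paper, and the positivity $c_j\ge1$ is automatic for point blow-ups, so the worry in your last paragraph is unnecessary. One small slip: after iterated blow-ups $m\pi^*F-E$, with $E$ the reduced exceptional divisor, need not be ample (it has degree $0$ on $E_2$ when $E_2$ comes from blowing up a general point of $E_1$), but any ample $\tilde B=m\pi^*F-\sum_j b_jE_j$ with $b_j\ge0$, which always exists, still gives $h_{\tilde B}\le m\,h_F\circ\pi+O(1)$ off $E$, and the rest of your argument goes through unchanged.
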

\smallskip

Now, if $\Gamma$ is finitely generated subgroup of $A(\overline\Q)$, say of rank $r$, we have
\begin{equation*}
\#\{x\in \Gamma: \hat h(x)\le T\}\ll T^{r\over 2},
\end{equation*}
whence, by \eqref{E.H}, 
\begin{equation*}
\#\{y\in  f(\Gamma): h(y)\ll_c T\}\ll T^{cr},
\end{equation*}
for any $c>1/2$.  Taking into account that the number of $y\in\P_1(k)$ with $h(y)\le T$, even merely those which are integers, is $\gg  \exp(c' T)$ (for a $c'>0$), we see that most values in $k$ are not attained by $f$ on $\Gamma$.  This greatly improves on the estimate of Theorem \ref{T.H1}, but of course is subject to the widely open difficult conjecture of Vojta.

\medskip
{\tt Common denominators and $\gcd$ estimates}. A nice consequence of this instance of Vojta's conjecture concerns common denominators of rational points on elliptic curves.
Given an elliptic curve defined by a cubic equation with integral coefficients, say
$$
y^2 = x^3 +ax+b
$$
for integers $a,b\in\Z$, the coordinates of  every rational point $(x,y)$ can be written as fractions of the form $x=u/d^2, y=v/d^3$, for a  positive integer $d$ such that $\gcd(u,v,d)=1$. We call $d$ the denominator of the rational point $P=(x,y)$ and denote it by $d(P)$.

As a consequence of Vojta's conjecture, one could prove that there do not exist infinitely many pairs of points $P,Q$ on an elliptic curve with $d(P)=d(Q)$ and $P\neq \pm Q$ (see the discussion in \cite{CZ-libro}, pag 85). 
Hence 
\smallskip
 
{\it Let $E$ be an elliptic curve over $\Q$. Under Vojta's Conjectures,  only finitely many sums of the form $x_1+x_2$, where $x_1,x_2$ are $x$-coordinates of rational points on $E$, are integers.}

 \smallskip

See Example \ref{Ex:somme} and \cite{McKinnon} for a proof of the relevant case of Vojta's conjecture in the case $E(\Q)$ has rank one. 
\medskip

Let us state now a few results of a similar strength, also cases of Vojta's Conjecture,  which may be proved unconditionally.  They constitute the analogue of the estimates \ref{E.H}, where abelian surfaces are replaced by $\G_{\rm m}^2$, and were first proved by the  authors; the  result included the following

\begin{theorem} [see \cite{CZgcd}, Thm. 1]\label{T.lower-bound-heights-1} Let $\Gamma$ be a finitely generated subgroup of $\G_{\rm m}^2(\overline\Q)$ and let $f,g\in \overline\Q[u,v]$ be nonconstant coprime polynomials not both vanishing at $(0,0)$.  For all $\epsilon >0$ there exists a finite union $Z$ of translates of proper subtori of $\G_{\rm m}^2$ such that, for all $p\in\Gamma -Z$ we have
$$
h\left({f(p)\over g(p)}\right) >h(f(p):g(p):1)-\epsilon h(p).
$$
\end{theorem}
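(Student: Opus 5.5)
This is the gcd estimate of \cite{CZgcd}, and the natural route is through Schmidt's Subspace Theorem. The first step is a reduction. Writing $f(p)/g(p)$ in projective coordinates, the difference between $h(f(p):g(p):1)$ and $h(f(p)/g(p))=h(f(p):g(p))$ is governed, place by place, by the common ``smallness'' of $f(p)$ and $g(p)$. More precisely, with $S$ a finite set of places containing the archimedean ones and those where the generators of $\Gamma$ are not units, we have
$$
h(f(p):g(p):1)-h(f(p):g(p)) \le \sum_{v\in S}\log^-\max(|f(p)|_v,|g(p)|_v)^{-1}+\sum_{v\notin S}\log^-\max(|f(p)|_v,|g(p)|_v)^{-1}+O(1),
$$
where $\log^- x=\max(0,-\log x)$; this is essentially a ``$\log\gcd$'' term. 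Outside $S$ the coordinates of $p=(u,v)$ are units, so the second sum measures the common divisors of $f(p),g(p)$ in the $S$-integers. Since $f,g$ are coprime, $R(u)=\mathrm{Res}_v$ and the analogous resultant in $v$ lie in the ideal $(f,g)$, which bounds the primes involved. The aim is therefore to show
$$
\sum_{v}\log^-\max(|f(p)|_v,|g(p)|_v)\le \epsilon h(p)+O(1)
$$
for $p$ outside finitely many torsion translates.

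The core step is to produce many linear forms in monomials. For a large integer $n$, consider the vector space $V_n$ of polynomials of degree $\le n$ in $u,v$, together with the subspace $W_n=fV_{n-\deg f}+gV_{n-\deg g}$. Because $f,g$ are coprime and not both zero at the origin, the codimension of $W_n$ in $V_n$ is bounded, roughly by $\deg f\deg g$ via Bézout, independently of $n$. At each place $v$ I will choose a basis of $V_n$ adapted to a filtration of $W_n$ by powers of the ideal $(f,g)$ weighted by $|f(p)|_v,|g(p)|_v$: elements of the form $f^ag^b\cdot(\text{monomial})$ complemented by a bounded number of monomials. The values at $p$ of these basis elements are linear forms in the monomial vector $(u^iv^j)_{i+j\le n}$, and their product is small, by roughly $\dim V_n\cdot\log^-\max(|f(p)|_v,|g(p)|_v)$ up to lower order terms. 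The monomial vector is then fed into the Subspace Theorem. The hypothesis that not both $f,g$ vanish at $(0,0)$ is needed here to control the $S$-integrality and the denominators of the monomial coordinates.

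The Subspace Theorem places the exceptional $p$ in finitely many hyperplanes $\sum c_{ij}u^iv^j=0$, that is, on finitely many curves in $\G_{\rm m}^2$. The points of $\Gamma$ on such a curve are contained, by Laurent's theorem (Mordell--Lang for tori), in finitely many translates of subtori; these give $Z$. Taking $n$ large relative to $1/\epsilon$ makes the error terms at most $\epsilon h(p)$.

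The main obstacle is the combinatorial counting in the core step: choosing the filtration so that the sum of the logs of the forms exceeds the trivial bound by a margin proportional to $n^3\log^-\max(|f|_v,|g|_v)$. Meanwhile the height of the monomial vector is about $n^3h(p)/6\cdot O(1)$, and the bounded codimension of $W_n$ only costs $O(n^2)h(p)$. I would first try the explicit count $\dim (f,g)^k\cap V_n$ via Hilbert functions of the complete intersection.
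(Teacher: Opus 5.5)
The paper gives no proof; it only cites the Corvaja--Zannier gcd estimate. Measured against that argument, your core step has a genuine gap. You never say how the smallness of $f(p),g(p)$ at places $v\notin S$ enters the Subspace Theorem, and with the monomial vector as the point it cannot.

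The theorem admits special linear forms only at a fixed finite set of places. The gcd, however, lives at places outside $S$ that vary with $p$ and are unbounded in number. There every coordinate $u^iv^j$ has absolute value $1$, so ``choosing a basis at each place $v$'' does nothing for $v\notin S$. Transferring the information to $S$ by the product formula goes the wrong way. For $\phi=f^ag^bm$ one gets $\sum_{v\in S}\log|\phi(p)|_v=-\sum_{v\notin S}\log|\phi(p)|_v\ge (a+b)\log\gcd$, where $\log\gcd=-\sum_{v\notin S}\log\max(|f(p)|_v,|g(p)|_v)$. So these forms are \emph{larger} on $S$, not smaller. Moreover, for any basis $\phi_i$ of all of $V_n$ the point $(\phi_i(p))_i$ is a fixed linear transform of the monomial vector, so its height does not see the gcd either.

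The missing idea is to let the gcd lower the \emph{height of the point}. Take a basis $\psi_1,\dots,\psi_{M'}$ of $W_n$ made of products $mf$, $m'g$, and apply the Subspace Theorem to $\mathbf w=(\psi_1(p):\cdots:\psi_{M'}(p))$. Then $\sum_{v\notin S}\log\|\mathbf w\|_v\le-\log\gcd$. At each $v\in S$, take an echelon basis of $W_n$ with respect to the ordering of monomials by $|m(p)|_v$; there are finitely many orderings, hence finitely many systems of forms. Since $\sum_{v\in S}\log|m(p)|_v=0$ for every monomial, and at most $\deg f\deg g$ monomials are missing as leading terms, the loss against exact cancellation is $O(n\,h(p))$. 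The gain is $(M'+\epsilon)\log\gcd$ with $M'\sim n^2/2$. Hence $\log\gcd\le (C/n)h(p)+O(1)$ unless $p$ lies on one of finitely many curves $\sum c_j\psi_j=0$, and Laurent's theorem finishes as you say. No filtration by powers of $(f,g)$ and no $n^3$ count is needed; the ``combinatorial obstacle'' you single out is not where the difficulty lies.

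The statement also involves the places of $S$, which your sketch does not handle. One has $h(f(p):g(p):1)-h(f(p)/g(p))=\sum_{v}\log^+\max(|f(p)|_v,|g(p)|_v)^{-1}$ over \emph{all} $v$ (your first display has the sign of $\log^-$ reversed), and the terms with $v\in S$ are not a gcd. This is precisely where the hypothesis at $(0,0)$ is used. It has nothing to do with ``$S$-integrality and denominators''; the bounded codimension of $W_n$ follows from coprimality alone.

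Without the hypothesis the statement is false. Take $f=u$, $g=v$ and $p=(2^n3^k,2^n)$ with $1\le k\le n$. Then $h(f(p):g(p):1)-h(f(p)/g(p))=n\log 2\ge h(p)/3$, and no finite union of translates of proper subtori contains these points.

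Suppose instead $f(0,0)\neq0$. The constant term gives $\max_m|c_m m(p)|_v\gg 1$ over the monomials of $f$. The Evertse / Schlickewei--van der Poorten lower bound for sums of $S$-units (the result the paper alludes to) then yields $\sum_{v\in S}\log^+|f(p)|_v^{-1}\le\epsilon h(p)+O(1)$. This holds off the finitely many curves where a subsum of $f(p)$ vanishes, which Laurent again puts into $Z$.

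Finally, your resultant remark is wrong as stated. $R(u)$ evaluated at varying $S$-units has prime factors outside any fixed finite set, and it only gives the trivial bound $\log\gcd\ll h(p)$.
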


The proof worked first by bounding a $\gcd$ (under an appropriate notion) of $f(p),g(p)$.   We note that the affine height  $h(f(p),g(p),1)$ can be bounded below essentially by the maximal height of the monomials that appear in $f,g$: this is a consequence of the Schmidt Subspace Theorem, after work of Schlickewei-van der Poorten and Evertse (see \cite{CZ-libro} for more). The present result instead bounds the cancellation {\it outside} the finite set $S$ of places such that $\Gamma$ consists of $S$-units.  

A previous result regarded the case when $f,g$ were resp. $u-1,v-1$. (See again \cite{CZ-libro} for a brief history of these results, which started with an upper bound for $\gcd(a^n-1,b^m-1)$ obtained also with  Bugeaud in \cite{BCZ}.)  The link of Vojta's Conjecture was noticed by  Silverman in \cite{Silverman}. 

The methods did not extend neither to the case of abelian surfaces in place of $\G_{\rm m}^2$, nor to higher dimensional tori.  While the first issue remains open, the second one was resolved by Levin, who proved in \cite{L} an analogue for two polynomials in an arbitrary number of variables.

\smallskip

The proof of Theorem \ref{T.lower-bound-heights-1} relies on the Subspace Theorem in Diophantine approximation, hence the result  is not effective.

Weaker estimates can be made effective by the use of Baker's theorem of linear forms in logarithms. As a sample, in  Appendix 1 by D. Masser it will be proved that
$$
h\left(\frac{3^m-1}{2^n-1}\right)\geq 10^{-32}\max\{m,n\} -1
$$
for all non-negative integers $m,n$ with $n\geq 1$.

\smallskip

As mentioned,  in \cite{BCZ} it was proved an ineffective result of the form $\gcd(a^n-1, b^n-1)\ll_\epsilon e^{\epsilon n}$, for multiplicatively independent positive integers $a,b$, where the (ineffective) implied constant depends on $a,b,\epsilon$. In particular the quotient $(a^n-1)/(b^n-1)$ cannot be integral for large $n$. 

Masser's estimate above is too weak to imply the non-integrality  of the quotient; however, replacing $3$ and $2$ by larger numbers $a>b$ whose ratio $a/b$ is sufficiently close to $1$, one can effectively bound the largest $n$ such that the quotient $(a^n-1)/(b^n-1)$ is integral.
This is done in Appendix 2, where we prove for instance that

\smallskip

\begin{theorem}\label{T.appendix}
For every pair of positive integers $a,b$ with $1<b<a<b(1+2^{-55})$ and for every integer $n\geq 1$, the ratio $(a^n-1)/(b^n-1)$ is not an integer. 
\end{theorem}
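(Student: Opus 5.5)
The plan is to turn the divisibility $b^n-1\mid a^n-1$ into a very small linear form in two logarithms, and then contradict it with a Baker-type lower bound whose shape does not depend on $n$.

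\emph{Setup.} Suppose, for contradiction, that $q:=(a^n-1)/(b^n-1)$ is an integer for some $n\geq 1$, and put $d:=a-b\geq 1$. The hypothesis $a<b(1+2^{-55})$ means $b/d>2^{55}$. A direct computation gives
$$q b^n-a^n=q-1 .$$
Since $(a^n-1)b^n-a^n(b^n-1)=a^n-b^n>0$, we also get $q>(a/b)^n>1$, so $q\geq 2$.

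\emph{Step 1: $n$ must be very large.} From $q\geq 2$ we get $a^n-1\geq 2(b^n-1)$, hence $(a/b)^n\geq 2-b^{-n}\geq 3/2$. Since $\log(a/b)\leq d/b<2^{-55}$, this forces $n\gg b/d$, so $n>2^{50}$ say. Similarly $h(q)=\log q$ is of size about $n\log(a/b)\asymp nd/b$, up to a harmless additive term.

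\emph{Step 2: the small linear form.} Dividing $qb^n=a^n+q-1$ by $a^n$ and taking logarithms gives
$$\Lambda:=\log q-n\log(a/b)=\log\bigl(1+(q-1)a^{-n}\bigr).$$
Hence $0<\Lambda<q a^{-n}\ll b^{-n}$, i.e. $\log|\Lambda|\leq -n\log b+O(1)$.

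The two numbers $q$ and $a/b$ are multiplicatively independent. Indeed, $a/b$ in lowest terms is not an integer, because $b<a<2b$. So a relation $q^s=(a/b)^t$ with an integer $q\geq 2$ forces $t=0$, and then $s=0$.

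\emph{Step 3: the lower bound for $\Lambda$.} Apply Laurent's explicit lower bound for linear forms in two logarithms of rational numbers to $\Lambda$. It gives
$$\log|\Lambda|\geq -C\,h(q)\,h(a/b)\,\max\{\log b'+c_0,\,c_1\}^2 ,$$
with an absolute constant $C$ of size about $25$–$30$, small constants $c_0,c_1$, and
$$b'\asymp \frac{n}{h(q)}+\frac{1}{h(a/b)}\asymp \frac{b}{d}.$$
The crucial point is that the $n$ in $b'$ cancels against $h(q)\asymp nd/b$, so the bound is uniform in $n$.

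\emph{Step 4: conclusion.} Insert $h(q)\asymp n d/b$ and $h(a/b)=\log a$, and compare with Step 2. Dividing by $n$ yields
$$\log b\leq C'\,\frac{d}{b}\,\log a\,\bigl(\log(b/d)\bigr)^2 .$$
Since $\log a/\log b\leq 2$, this becomes $b/d\leq 2C'(\log(b/d))^2$. The function $x/(\log x)^2$ is increasing for large $x$, so this inequality fails once $b/d>2^{55}$, which is the desired contradiction.

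\emph{Main obstacle.} The hard part is the explicit numerics of Step 3. One has to choose the right form of Laurent's theorem (rational case, with explicit $C$ and the correct $b'$), and then track the constants carefully enough, including the $O(1)$ terms in $h(q)$ and in $\log|\Lambda|$, to land exactly at the threshold $2^{-55}$. Steps 1, 2 and 4 are routine once that bound is in hand.
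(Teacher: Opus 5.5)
Your proposal is correct and is essentially the paper's argument. The paper also shows $0<\log q-n\log(a/b)<b^{-n}$, checks that $a/b$ and $q$ are multiplicatively independent, and applies an explicit lower bound for a linear form in two logarithms (Waldschmidt's, as quoted from Bugeaud's book); its parameter $C=1/\log(a/b)$ plays the role of your $b'$, and the factor $n$ cancels exactly as you describe because $\log q<2n\log(a/b)$. The differences are minor: you use Laurent's bound and argue by contradiction with $q\in\Z$ (so $h(q)=\log q$), whereas the paper keeps $h(q)=\log(dq)$ and proves the stronger estimate $d\ge\exp(\kappa n)$ for the denominator $d$ of $q$. Also, the final inequality fails by an enormous margin at $2^{55}$, so the delicate constant tracking you flag as the main obstacle is not actually needed.
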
 
\smallskip

This  is reminiscent of lower bounds for the fractional part of powers of $a^n/b^n$, where the lower bound $\|a^n/b^n\|\gg_\epsilon e^{-\epsilon n}$ holds for every $\epsilon$, as an application of Roth-Ridout's theorem,  and effective bounds can be provided whenever $a/b$ is sufficiently close to $1$.

\section{Proofs of main results}

Let us prove Theorem \ref{T.1}, in the following form:

\begin{theorem}\label{T.Main2}
Let $X$ be a a semi-abelian variety defined over a number field $k$; let $f:X\to \P_1$ be a non-constant map. Let $\Gamma\subset X(k)$ be a finitely generated subgroup. The set of points in $X(k)$ which are not on the image $f(\Gamma)$ satisfy the weak-approximation property. In particular, they form an infinite set.  
\end{theorem}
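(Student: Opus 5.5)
We must show that the set $\P_1(k)\setminus f(\Gamma)$ is dense in $\prod_{v\in S}\P_1(k_v)$ for every finite set $S$ of places. The plan rests on Faltings--Vojta (semiabelian Mordell--Lang) applied to the closure $W$ of the graph of $f$ inside $X\times\P_1$, or rather inside a variety $X\times Y$ where $Y$ is a convenient one-parameter family. The idea is to show that $f(\Gamma)$ meets any sufficiently ``rigid'' infinite set, such as a coset of a finitely generated subgroup of $\G_{\rm m}(k)$ or the values of a binary recurrence, in a set that is finite modulo trivial exceptions. We then exploit the abundance of such rigid sets, which can be placed arbitrarily close to any prescribed $v$-adic targets.

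\textbf{Step 1 (intersection with a torus orbit).} Fix $a\in k^*$ and a finitely generated group $\Lambda\subset k^*$, say $\Lambda=\langle \pi\rangle$ for a suitable element $\pi$. Consider the semiabelian variety $X\times\G_{\rm m}$, its finitely generated subgroup $\Gamma\times\Lambda$, and the subvariety $V=\overline{\{(x,\lambda): f(x)=a\lambda\}}$. By Faltings--Vojta, $V(k)\cap(\Gamma\times\Lambda)$ lies in finitely many translates $(\gamma,\mu)+B$ of algebraic subgroups $B$ contained in $V$. For such a translate, the projection of $B$ to $\G_{\rm m}$ is either trivial or all of $\G_{\rm m}$. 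If it is trivial, the translate contributes just one value $a\mu$. If it is surjective, then $f$ restricted to $\gamma+B'$, with $B'$ the projection of $B$ to $X$, factors through a character: $f(\gamma+b)=a\mu\,\chi(b)$ for a nontrivial character $\chi$. Consequently $f$ restricted to a coset of a subgroup is a rescaled character, and there are only finitely many such cosets relevant to $\Gamma$. These cosets produce ``exceptional'' values of the form $c\cdot\chi(\Gamma')$, where $\Gamma'$ is finitely generated, and there are only finitely many such families. I would handle them by choosing $\pi$ multiplicatively independent from all these finitely generated groups $\chi(\Gamma')$, which is possible by taking $\pi$ a generator of a large prime ideal outside the relevant $S$-units. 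With this choice, $a\pi^n\in f(\Gamma)$ holds for only finitely many $n$. The same argument gives the Addendum (Theorem~\ref{Addendum}).

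\textbf{Step 2 (weak approximation).} Given targets $\alpha_v\in\P_1(k_v)$ for $v\in S$ and $\epsilon>0$, we may assume each $\alpha_v$ is finite (using a Möbius change of coordinate if needed, which replaces $f$ by $\phi\circ f$). Choose $a\in k$ approximating all the $\alpha_v$ by weak approximation in $k$. Next choose $\pi\in\O_k$ with $|\pi-1|_v$ tiny for $v\in S$ finite, and close to $1$ at the archimedean places. This is possible, for instance, by taking $\pi\equiv1$ modulo a high power of the primes in $S$ via strong approximation, together with the independence requirement of Step 1. Then for $n$ ranging over a suitable arithmetic progression the values $a\pi^n$ are $v$-adically close to $a$ for the finite places. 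At archimedean places I would instead use $a+\pi^{-n}$-type perturbations, or pick $\pi$ of archimedean absolute value very close to $1$ so that $\pi^{n}$ stays near $1$ for a range of $n$; alternatively one can run Step 1 with a family of the form $a+b\lambda$, $\lambda\in\Lambda$, and take $\lambda=\pi^{-n}$ with $|\pi|_v>1$ at the archimedean places and $b$ tiny, which keeps the points $v$-adically near $a$. Since $f(\Gamma)$ meets the chosen family in only finitely many points, all but finitely many $n$ give points in the complement that lie within $\epsilon$ of the targets. Infinitude then follows immediately.

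\textbf{Main obstacle.} I expect the main difficulty to be the degenerate translates in Step 1, namely cosets on which $f$ becomes a (rescaled) character, or, for the additive family $a+b\lambda$, cosets on which $f=a+b\chi$. These must be disposed of by the independence choice of $\pi$ (respectively of $\Lambda$), and one must check that this choice remains compatible with the approximation requirements. In particular $\Lambda$ must avoid finitely many specific finitely generated subgroups of $k^*$ modulo torsion, which is compatible with imposing congruence and size conditions on its generator. A further nuisance is the indeterminacy locus of $f$, which has to be removed from the graph before applying Mordell--Lang.
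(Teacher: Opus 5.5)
There is a genuine gap in Step~1, at exactly the point you flag as the main obstacle. The translates supplied by Faltings--Vojta are those covering $V\cap(\Gamma\times\langle\pi\rangle)$. So the pairs $(B',\chi)$, and the groups $\chi(\Gamma')$ they carry, depend on $\pi$ (and on $a$), and choosing $\pi$ ``independent of these groups'' afterwards is circular.

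Nor is there a $\pi$-independent finite list of degenerate cosets, as you claim. Take $X=\G_{\rm m}\times E$, $f(t,e)=t\,g(e)$ with $g$ nonconstant, and $\Gamma=\Gamma_T\times\Gamma_E$ with $\Gamma_E$ infinite. Then each of the infinitely many cosets $\{e\}\times\G_{\rm m}$, $e\in\Gamma_E$, carries $f$ as a rescaled character.

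The missing ingredient is a uniform statement: there is a finite set $S_0$ of places, depending only on $X$ and $\Gamma$, such that $\chi(\Gamma\cap B')\subset\O_{S_0}^*$ for \emph{every} connected algebraic subgroup $B'\subset X$ and every character $\chi$ of $B'$. One proof: for $v\notin S_0$ (good reduction, $\Gamma$ $v$-integral), $\Gamma$ lies in the compact subgroup $\mathcal{X}(\O_v)$ of $X(k_v)$. The continuous homomorphism $\chi$ then maps the compact group $B'(k_v)\cap\mathcal{X}(\O_v)$ into a compact subgroup of $k_v^*$, hence into $\O_v^*$.

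With this lemma, take any $\pi$ having a prime factor outside $S_0$, so $\langle\pi\rangle\cap\O_{S_0}^*=\{1\}$. Rebase each surjective translate $(\gamma,\mu)+B$ at a point of $\Gamma\times\langle\pi\rangle$. Then $\mu\chi(b')\in\langle\pi\rangle$ with $b'\in\Gamma\cap B'$ forces $\chi(b')=1$, so each translate contributes a single value and Step~1 holds.

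Step~2 also needs to be settled rather than left as alternatives. At a real place $v$ one has $|\pi|_v=1$ only for $\pi=\pm1$, so $a\pi^n$ approximates $a$ only for a bounded range of $n$. A bounded range is useless, since Step~1 only excludes an uncontrolled finite set of exponents.

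Your additive variant is the one that works. Take $b\neq0$ small at all $v\in S$, and for instance $\pi=1+M$ with $M\in\Z_{>0}$ divisible by high powers of the rational primes under $S\cup S_0$. Then $|\pi|_v=1$ at the finite $v\in S$, $|\pi|_v>1$ at the archimedean places, and $\pi\notin\O_{S_0}^*$. Hence $a+b\pi^{-n}$ is close to $a$ at every $v\in S$ for all large $n$, and Step~1 runs verbatim for the family $f(x)=a+b\lambda$.

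Once repaired, your route genuinely differs from the paper's. The paper avoids degenerate translates altogether by taking as auxiliary group an elliptic curve $E$ of positive rank not isogenous to any factor of $X$. Then every connected subgroup of $X\times E$ is $A\times E$ or $A\times\{0\}$. It approximates only at non-archimedean places, using $\pi(nu_0)$ with $n$ highly divisible. Your version replaces the isogeny-class lemma by the $S_0$-unit uniformity, and in exchange also handles archimedean places.
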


Note that in  this theorem, unlike  Theorem \ref{T.Main}, we work with only one semi-abelian variety. Actually, the case of several semi-abelian varieties and several maps can be easily formally recovered: suppose indeed that $h$ semi-abelian varieties $X_1,\ldots,X_h$ are provided  with non-constant maps $f_i:X_i\to \P_i$ and finitely generated subgroups $\Gamma_1\subset X_1(h),\ldots, \Gamma_k\subset X_h(k)$. Select non-zero values $a_1,\ldots,a_k$ attained by the $f_i$ at some point in $\Gamma_i$, for $i=1,\ldots,h$. Let $X_{h+1}$ be the torus $\G_m$ and $\Gamma_{h+1}\subset X_{h+1}$ be the multiplicative group generated by $a_1,\ldots,a_h$ and $f_{h+1}:X_{h+1}\to \P_1$ the canonical inclusion of $\G_m$ into $\P_1$. 
Set $X=X_1\times\cdots \times X_{h+1}$ and let $f:X\to \P_1$ be defined as $f(x_1,\ldots,x_{h+1})=f_1(x_1)\cdots f_{h+1}(x_{h+1})$. 
Finally, set $\Gamma=\Gamma_1\times\cdots \Gamma_{h+1}\subset X(k)$. Then it is clear that the value set  $f(\Gamma)$ includes each value set $f_i(\Gamma_i)$, for $i=1,\ldots,h$.

\medskip

In the proof of Theorem \ref{T.Main2} we shall use the following simple lemma, which we state explicitly although  it is well known:

\begin{lemma}
There exist  infinitely many $\bar{\Q}$-isogeny classes of elliptic curves over a given number field. In each class, there exists an elliptic curve over $k$ of positive Mordell-Weil rank.
\end{lemma}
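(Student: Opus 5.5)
The lemma has two parts: infinitely many $\bar{\Q}$-isogeny classes defined over $k$, and a curve of positive rank over $k$ in each class. The plan is to exploit the $j$-invariant and quadratic twists.

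\emph{First part.} Curves over $k$ with distinct $j$-invariants may still be isogenous over $\bar\Q$, so we need an isogeny invariant. For this we use the fact that $\bar\Q$-isogenous curves have the same potential reduction type at every prime: integrality of the $j$-invariant at a place $v$ is an isogeny invariant (potential good reduction at $v$).

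Fix a sequence of rational primes $p_1,p_2,\ldots$, and for each $n$ choose $j_n\in\Q\subset k$ with $v_{p_n}(j_n)<0$ but $v_{p_m}(j_n)\ge 0$ for $m<n$; for instance $j_n=1/p_n$. Let $E_n/k$ be any curve with $j(E_n)=j_n$. Then $E_n$ has potentially multiplicative reduction at places above $p_n$ and potentially good reduction above $p_m$ for $m<n$. Hence $E_n$ is not $\bar\Q$-isogenous to any $E_m$ with $m<n$, and we obtain infinitely many classes.

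An alternative route is via CM: the curves with CM by orders in distinct imaginary quadratic fields lie in distinct $\bar\Q$-isogeny classes. This requires a $j$-invariant in $k$, however, which is not available for general $k$, so the valuation argument is cleaner.

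\emph{Second part.} Given $E/k$, say $y^2=x^3+ax+b$, consider the quadratic twists $E_d: dy^2=x^3+ax+b$ for $d\in k^*$. All of these are isomorphic to $E$ over $\bar{\Q}$, hence lie in the same class. Pick any $x_0\in k$ with $d:=x_0^3+ax_0+b\neq 0$; then $E_d$ has the point $(x_0,1)$. We must make sure this point is non-torsion, and here lies the main (mild) obstacle.

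By Merel's theorem, or more elementarily by finiteness of torsion over $k$ together with uniformity over twists, the torsion of $E_d(k)$ is bounded in terms of $k$ only. Concretely, torsion points of $E_d$ correspond, via $(x,y)\mapsto(x,\sqrt d\,y)$, to torsion points of $E$ defined over $k(\sqrt d)$. Their $x$-coordinates lie in $k$ and have bounded height, by Northcott applied to $\hat h=0$ together with the bound $|h-\hat h|\le C_E$, which is independent of $d$ because $h(x)$ is twist-invariant. Therefore choosing $x_0\in k$ of height larger than this bound gives a non-torsion point $(x_0,1)\in E_d(k)$. So $E_d$ has positive rank over $k$, concluding the proof.
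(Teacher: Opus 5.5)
Your proposal is correct and follows essentially the paper's route. For infinitude, you use the isogeny-invariance of $p$-integrality of $j$: you justify it via potential good reduction, while the paper gets it from the modular polynomial $\Phi_N$ being monic in each variable. For positive rank, you twist by $d=x_0^3+ax_0+b$ to produce the point $(x_0,1)$, exactly as the paper does, and your height argument supplies the finiteness of torsion points with $x$-coordinate in $k$, which the paper asserts without proof.
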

 
\begin{proof}
To show the infinitude of isogeny classes, probably the simplest way is observing that for two isogeneous elliptic curves each $j$-invariant is integral over the ring generated by the other. Indeed, the two $j$ invariants are linked by a modular equation $\Phi_N(x,y)=0$, where $\Phi_N(x;y)\in\Z[x,y]$ is monic in each variable.   


Let us now consider a single elliptic curve over $k$; we shall find a second one, still defined over $k$ and isomorphic to the given one over $\bar{k}$, having positive Mordell-Weil rank. We can argue as follows: consider a Weierstrass equation of the form
$y^2=p(x)$, for a degree-three polynomial $p(x)$ with distinct roots. Every specialization $x\mapsto x_o\in k$ gives rise to a pair of  quadratic algebraic points on the elliptic curve (with vanishing sum). Only finitely many of them have finite order, so we can pick one of them, say $(x_0,y_0)$, of infinite order. Observe that $y_0^2$ belongs to $k$ and consider the twist $y_0^2 y^2=p(x)$; this new elliptic curve admits the rational point $(x_0,1)$, which is of infinite order. 
\end{proof}
\smallskip

\begin{proof}[ Proof of Theorem \ref{T.Main2}] 

Let $X,k,f$ be as in the statement. Let us choose a finite set of non-archimedean places $\{\nu_1,\ldots,\nu_h\}$ of $k$, rational points $q_1,\ldots,q_k$ and positive integers  $r_1,\ldots,r_h$. We look for points $p \in \P_1(k)\setminus f(\Gamma)$ which  approximate   the points $p_1,\ldots,p_h$ with respect to the given valuations, i.e. satisfy
\begin{equation}\label{E.approximation}
p\equiv q_i \pmod {\nu_i^{r_i}} \qquad \mathrm{for} \ i=1,\ldots,h.
\end{equation} 
By the weak approximation property on the line (i.e. the Chinese remainder theorem), we can find a point $p_0\in \P_1(k)$ such that the above system of congruences is satisfied for $p=p_0$.

Then, construct an elliptic curve $E$ over $k$, of positive Mordell-Weil rank, and a morphism $\pi:E\to \P_1$, still defined over $k$, such that $\pi$ sends the neutral element of $E$ to $p_0$. We also require that $E$ admits no non-constant morphism to $X$. 
It is clear that such an elliptic curve can be constructed: by Poincar\'e's complete reducibility theorem   the elliptic curves admitting non-constant morphisms to $X$ lie in only finitely many isogeny classes and then by the previous lemma one can pick an elliptic curve over $k$ (even over $\Q$) outside this finite union of classes, having positive Mordell-Weil rank. 

Now, we claim that only finitely rational points of $\P_1$ can lie both in the value sets   $\pi(E(k))$ and $f(\Gamma)$: indeed, consider the fibre product 
$$
Y=X\times_{f,\pi} E=\{(x,u)\in X\times E\, :\, f(x)=\pi(u)\}\subset X\times E.
$$
It is a hypersurface of the semi-abelian variety $X\times E$ and is endowed with a natural map to $\P_1$, sending $(x,u)\mapsto f(x)=\pi(u)$. The points in $f(X(k))\cap \pi(E(k))$ are precisely the images of the rational points in $Y$. Now, by Faltings-Vojta's theorem on rational points on sub-varieties of semi-abelian varieties, the set $Y(k)$ is contained in the union of finitely many translates of algebraic subgroups of $X\times E$ contained in $Y$.

By our assumption on $E$ the only connected algebraic subgroups of the product $X\times E$ are of the form  $A\times E$ or $A\times\{0_E\}$,  where $A\subset X$ is an algebraic subgroup of $X$. Let us show that  $Y$ does not contain any translate of a  subgroup of the form $A\times E$. Indeed,   if for some $x_0 \in X\times E$ the translate $(u_0+A)\times E$  would be contained in $Y$, from the equation $f(u_0+a)=\pi(u)$ valid for all $a\in A$ and $u\in E$, we would obtain a contradiction (since $\pi$ is non-constant).

Then all rational points in $Y(k)$ are contained in sets of the form $X(k)\times F$, for a finite set $F\subset E(k)$. In other words, the projection $\pi: X\times E \to \P_1$ takes only finitely many values in the points of $Y(k)$. Equivalently, for all but finitely many points $u\in E(k)$, the rational point $\pi(u)$ does not lie in the value set $f(X(k))$, and {\it a fortiori} not in the image $f(\Gamma)$. 

To construct the rational point $p\in \P_1(k)\setminus f(\Gamma)$ approximating the given points $q_1,\ldots,q_h$ as in \eqref{E.approximation}, observe that $p=\pi(0_E)$ does solve the system \eqref{E.approximation}. Take now any non-torsion point $u_0\in E(k)$; its multiples $nu_0$, for sufficiently divisible $n\in Z$, are congruent to the neutral element $0_E$ modulo $\nu_i^{r_i}$ for $i=1,\ldots,h$. Then their images $\pi(nu_0)$ solve the congruence system \eqref{E.approximation}, and all but finitely many of them lie outside the set $f(\Gamma)$.

\end{proof}

{\it Proof of Theorem \ref{T.H1}}.  We shall now sketch a  {\it proof of Theorem \ref{T.H1}}. Recall that, in the notation of Theorem \ref{T.Main2}, we now want to prove that the set $\P_1(k)\setminus f(\Gamma)$ has a growth bounded from below by inequality \eqref{E.lower-bound}. 

The construction carried out so far provides a set of rational points outside $f(\Gamma)$ which grows, as a function of the height, as the set of rational points of an elliptic curve; if $r$ is the rank of the elliptic curve used in the above proof, the number of rational points so constructed of logarithmic height   $\leq T$ is $\gg r^{1/2}$. Replacing the elliptic curve by an abelian variety of arbitrary dimension leads to substantially the same proof. One must only take further care in excluding the presence of algebraic subgroups in the relevant product, which might provide infinite families of points in $\P_1(k)$ which lift to the sami-abelian variety. However, this is not difficult to achieve, either by chosing a simple abelian variety nor isogenous to any factor of the given  semi-abelian variety $A$ or chosing products (even power) $E^r$ of an elliptic curve $E$ as in the proof above. In this last case, although the product  $A\times E^r$ will contain many algebraic subgroups, for a generic choice of the map $\pi:E^r\to \P_1$, such subgroups will not be contained in the hypersurface of equation $f(x)=\pi(y)$ in $A\times E^r$.

\medskip

 {\it Proof of the Addendum (Theorem \ref{Addendum})}.  
In the proof of Theorem \ref{T.Main2}, it is shown that the complement of $f(\Gamma)$ in $\P_1(k)$ contains all but finitely many images of rational points of rational functions defined on an auxiliary elliptic curve.   These infinite sets of rational points, lying on  the complement of the value set $f(\Gamma)$, can indeed be chosen with some  flexibility: for instance, one can find such sets which are composed of rational integers.

The idea is to replace the auxiliary elliptic curve $E$ by a a suitable linear torus. Let us show the details, following the pattern (and the notation) of the proof of Theorem \ref{T.Main2}. 

Given a semi-abelian variety $X$ and a rational map $f:X\to \P_1$ as in the statement of Theorem\ref{T.Main2}, we can find a non-split linear torus $G$ defined over $\Q$, having infinitely many integral points (via the theory of Pell's equation) and admitting no non-constant  morphism  $G\to X$ {\it defined over the number field $k$}. This is obvious if $X$ is an abelian variety. Otherwise, $X$ admits a maximal connected linear subgroup $H$, which is isogenous over $k$ to the products of finitely many non-decomposable linear tori. 

It suffices to take for $G$ a $\Q$-torus that is not isomorphic over $k$ to any factor of $H$. Recall that $G(\Z)$ is infinite (basically, this reduces to   the theory of Pell's equation).

We now choose a non-constant morphism $\pi:G\to \A^1\subset \P_1$, given by a regular function on $G$ defined over the rational integers,  and define the hypersurface $Y\subset X\times G$ as the fiber product of $(f:X\to \P_1)$ and $(\pi:G\to \P_1)$. The rest of the proof is identical.

We then obtain that all but finitely many numbers of the form $\pi(u)$, for $u\in G(\Z)$, lie outside the image of $f(X(k))$ and all these numbers are rational integers.

When $G=\mathrm{SO}(q)$ is the orthogonal group of the quadratic form $q(x,y)=x^2+xy-y^2$ and  $\pi:G\to \A^1$ is the map sending $G\ni \left( \begin{matrix} a&b \\ b&a+b\end{matrix}\right)\mapsto b$, the image $\pi(G(\Z))$ contains the set of Fibonacci numbers.
Any other   binary linear recurrent sequence, provided it is not an arithmetic progression, can be treated the same way.

\medskip

Another variant of the proof, giving rise to the first conclusion of the Addendum, runs as follows: consider simply the multiplicative group $\G_m$ over $k$. Although $\G_m$ might be a factor of the maximal linear torus $H$ inside the semi-abelian variety  $X$, since the group $X(\O_S)$ is finitely generated, for every proper algebraic subgroup $K$ of $\G_m\times X$ defined over $k$ and all large primes $p$, there are no points in $K(\Q)$ of the form $(p,x)$ with $x\in X(\O_S)$.

This proves the first finiteness conclusion of the Addendum.

\section{Examples of integral values  over $\Z$ of  rational maps $f:\A^2\to\A^1$} \label{S.rationalmaps} 

 In this section we collect some examples of integrality of values, using rational maps $f:\P_2\to\P_1$ or $f:\A^2\to\A^1$ (this distinction  means that in the first case we consider {\it rational points} where a rational function takes integral values (with respect to $\infty\in\P_1$ for the standard coordinates), in the second case {\it  integral points} where such rational function is integral valued).
 
 Each of the examples contains some proofs, but we haven't stated them as `theorems' since they are different and special in many aspects. Because of the complication in the arguments, an exception occurs with  Example \ref{EX.3}, where we shall provide a proof of Theorem \ref{T.divisibility}.  We shall also restrict to the ring $\Z$, which for most  features in  these examples  behaves in a peculiar way compared to general number fields.

\medskip

In all the present examples we set $f={P(x,y)\over Q(x,y)}$, where $P,Q\in\Z[x,y]$ are coprime polynomials of degree $\le 2$, and $x,y$ are affine coordinates (so, if we work in $\P_2$, we have therein homogeneous coordinates $(x:y:1)$). (We may assume that the homogeneous part of $Q$ of degree $2$ is not definite, for otherwise the ratio is bounded on $\R^2$.) We see that we have integral values at points where $Q$ is a unit. This also   immediately justifies our comment above on the peculiarity of $\Z$: if we work in arbitrary  rings of $S$-integers, the equation $Q(x,y)=$unit  will more often have a Zariski-dense set of solutions for $\deg Q=2$. 

\smallskip

{\tt Pencils of affine conics}. Note further that, if we let $q$ be an integral value, the equation becomes 
$P(x,y)=qQ(x,y)$, so,  if for instance we seek integral  $x,y$, we have a pencil of  affine conics over $\A^1$, defined over $\Z$, and we ask for which integral values of the parameter we have an integral point. 

\smallskip

In general this appears to be a deep issue. As already mentioned, the case of {\it rational} solutions amounts to conics having rational points, and has been studied in the context of specialisation of Brauer groups, by a number of authors (see e.g. Serre's paper \cite{S2} and  the last author's  paper \cite{ZEns}, also for references).  

\smallskip

From another viewpoint, one can consider the solutions $(x,y)$ to the divisibility problem as being integral points on a suitable del Pezzo surface of degree $5$ with respect to an anti-canonical divisor: indeed, blowing up the four points of intersection of the two conics $P(x,y)=0$ and $Q(x,y)=0$, one obtains a complete del Pezzo surface of degree $5$ (its anti-canonical embedding sends the blown-up plane to a surface of degree $5$ in $\P_5$) The integrality of the quotient $P(x,y)/Q(x,y)$ amounts to the integrality of the corresponding point $(x,y)$ with respect to the line at infinity plus the strict transform of the conic $Q(x,y)=0$. This sum is a reducible divisor in the anti-canonical class (a hyperplane section with respect to the said embedding inot $\P_5$).

The {\it potential} density of the integral points for all del Pezzo surfaces with an anti-canonical divisor at infinity has been proved by Coccia in \cite{Coccia2}, after preliminary work of Hassett-Tschinkel. Actually in \cite{Coccia2} the potential Hilbert Property is also proved, in the case the surface is simply connected, which is the present case.

\smallskip

Yet another viewpoint consists in cosidering the affine cubic surface of equation $ P(x,y)=z\cdot Q(x,y)$, whose integral points are the solutions to our divisibility problem. Note that such a surface, although can be smooth in $\A^3$, has a singularity at infinity (with respect to the embedding $(x,y,z)\mapsto (x:y:z:1)$ it is the point $(0:0:1:0)$).

\smallskip

{\tt Sections}. Note also that by Tsen's theorem a pencil of conics has always a rational section defined over some finite extension of the ground field, so the problem for rational points is naturally  heavily  linked with  the special ground field. On the contrary, seldom there are integral sections for an affine pencil, no matter the ground field. (Examples come from the theory of the Pell-Abel equation $x^2-D(t)y^2=1$, see \cite{Zpell}.)  The study of regular sections from $\A^1$ is a topic of independent interest and it seems worthwhile to develop it together with an analysis  of the integral points much more extended than the present one.


\smallskip

\medskip

Going back to the above questions, we now present some examples treated in detail, as announced in the introduction.

\medskip

\begin{example}\label{EX.-2} {\bf Divisibility of $x^2-1$ by $y^2$}.
Set   $
f(x,y)={x^2-1\over y^2},
$
 which can be viewed as a rational map $\A^2\to\A^1$, for which we look  at  integral values at integral points. If $q\in \Z$ is such a value, then the equation $x^2-qy^2=1$ admits an integral solution $(x,y)\in \Z^2$  with $y\neq 0$ (note that the solutions   $(\pm 1,0)$ are not admissible). Then either $q\in\{0,-1\}$ or $q$ is a positive non-square number. Also, all the positive non-squares integers are attained values at integral points, since the corresponding Pell's equation admits non-trivial solutions. Hence the relevant set consists precisely of the numbers $0,-1$ and all the positive numbers which are not perfect squares.   
 
 The set of points $(x,y)\in\Z^2$ so obtained is not thin in $\A^2$, as is not difficult to check.
 
 {\tt Rational points}. The issue of  nonzero integral values at rational points leads to the conic $x^2=qy^2+z^2$, which admits the rational point $(1:0:1)$, so is rational over $\Q$ for each integer $q\neq 0$.
\end{example}

\begin{example}\label{EX.-1} {\bf Divisibility of $x^2+1$ by $y^2$}.
Let  now 
$
g(x,y)={x^2+1\over y^2}.
$ 
The integral values attained by $g$ at integral points with $y\neq 0$ are, apart from the number $1$,  those positive non-square integers $q$ such that the ring $\Z[\sqrt{q}]$ admits a unit of norm $-1$. Contrary to the previous case, their distribution is still in part mysterious, though it is well known that  their set includes primes $\equiv 1\pmod 4$ and excludes numbers divisible by any  prime $\equiv 3\pmod 4$.   Note the inclusion $g(\Z^2)\cap \Z \subset f(\Z^2)\cap \Z$, which is not explained by any factorization of the map $g$ by $f$.

Again, the set of corresponding points $(x,y)\in\Z^2$   is not thin in $\A^2$.

 {\tt Rational points}. We are led to consder the conic $x^2+z^2=qy^2$, which has rational points if and only if the integer $q$ is the sum of two squares in $\Z$.  
\end{example}

\begin{example}\label{EX.0} {\bf Divisibility of $x^2-1$ by $y^2-1$}. Consider now the rational function $
{x^2-1\over y^2-1}$,  excluding the values $ x,y= \pm 1
$ where it is not defined.

 The set of its integral values at integral points includes  the set of positive non-square integers: indeed, for every positive integer $q$, the equation $(x^2-1)/(y^2-1)=q$ is equivalent to the  equation $x^2-qy^2=1-q$, provided that the `trivial' solutions $(\pm 1,\pm 1)$ 
are  excluded. However the existence of the trivial solution $(1,1)$ provides an integral point on the hyperbola  defined by the previous equation, and this fact, in turn, entails the existence of infinitely many other solutions if $q>0$ is not a square, since the   hyperbola  has irrational asymptotes.\footnote{This is a special case of a general theorem of Gauss: see {\it Disquisitiones}, Art. 216.}  On the other hand, $q<0$ implies $y= 0$  and gives rise to the values  $q$   of the form $1-n^2$, for $n=1,2,\ldots$.
  As to the squares $q=n^2$, the equation  amounts to  $x^2-(y^2-1)n^2=1$, which may be viewed as a Pell equation with discriminant $y^2-1$ (and solution $(x,n)$). For varying integer $y>1$, all the integer solutions are then given by $x+n\sqrt{y^2-1}=(y+\sqrt{y^2-1})^t$, for all nonzero integer values of the exponent $t$. (See also the related Example \ref{EX.1} below.) So the relevant $n$ are given by the Chebyshev polynomials of the second kind: 
  $$
  n=U_t(y):={(y+\sqrt{y^2-1})^t-(y-\sqrt{y^2-1})^t\over 2\sqrt{y^2-1}}, \qquad 1<y\in\N, \quad t=1,2,\ldots.
  $$
The set of the integers $n$  so obtained includes the even integers (set $t=2$) and  it is not difficult to show that  the subset of odd numbers contains $O(\sqrt B)$ integers in $[-B,B]$ but  is not thin in $\A^1$.

It is also not difficult to show that the set of integer points $(x,y)$ giving an integral value is not thin in $\A^2$.

\begin{remark}
One simple way of obtaining an integral value for $(x^2-1)/(y^2-1)$ is to choose $y\neq\pm 1$ and then $x\equiv \pm y \pmod{y^2-1}$. In geometric term, the surface $S$ of equation $x^2-1=z(y^2-1)$ is covered by a pencil of curves parametrized by $\A^1$, namely those obtained by sending
$$
\A^1 \ni y \mapsto (x,y,z)=(y+t(y^2-1),y,2ty+t^2(y^2-1)+1) \in S,
$$
where $t$ is a parameter.
In other words, the map $\A^2\ni(t,y)\mapsto (x,y,z)\in S$ defined by the above formula provides a birational morphism  from the plane to the cubic surface $S$.  Another parametrization comes from te case $x\equiv -y \pmod{(y^2-1)}$.

Note that such a parametrization {\it does not} produce all integral points; indeed, suppose $y^2-1$ factors into a product of coprime numbers as $y^2-1=mn$, $\gcd(m,n)=1$. Then there are  solutions with $x\equiv y$ $\pmod m$ and $x\equiv -y\pmod n$. 

In geometric terms, the failure of the morphism to cover all integral points is explained from the fact that its inverse is    rational but non-regular.  
\end{remark}

 {\tt Rational points}. We find now the conic $x^2+(q-1)z^2=qy^2$ in $\P_2$.  Again, this is rational over $\Q$ in view of the rational point $(1:1:1)$.
 \end{example}



\begin{example}\label{EX.1} {\bf Divisibility of $x^2+y^2-a$ by $xy$}.  Let $P(x,y)=x^2+y^2-a$, $Q(x,y)=xy$. If  $q\in\Z$ is an integral value of $f(x,y)=P(x,y)/Q(x,y)$ at an integer point $(x,y)\in\Z^2$,  we have $x^2+y^2-qxy=a$, i.e. $(2x-qy)^2-(q^2-4)y^2=4a$.  We have a kind of almost-Pell equation of discriminant $q^2-4$. If $|q|=0,1$ we have only finitely many integer solutions.  If $q=\pm 2$, there are solutions if and only if $a$ is a square, and the integer  solutions lie on the  two  lines $x-qy=\pm\sqrt a$. If $|q|>2$, the theory of Pell equation indeed yields that if there is an integral solution there are infinitely many. Also, a solution with least $|y|$ satisfies $y^2\le 2|a|{q+\sqrt{q^2-4}\over q^2-4}$ (see e.g., \cite{Z2}, p.19). If $y=0$ then again $a$ must be a square and the solutions are derived from those of $(2x-qy)^2=4a$ on using the Pell equation. In fact, we have a regular section $s:\A^1\to\A^2$ of $f$, given by $s(q)=(q\sqrt a, \sqrt a)$. In particular,  we then obtain a Zariski-dense set of integer solutions; the corresponding integral values cover the whole of $\Z$. 

If $a$ is not a square, we must have $|y|\ge 1$,  whence 
$q^2\le 4+8|a||q|$, so $|q|$ is bounded and we have only finitely many integral values.
(See also \cite{Z2}, Ex. 1.19, p. 16.)  In particular, there are no regular sections  defined over $\Z$  (which may be proved directly, even over $\Q$.) 
\end{example}

\medskip

{\tt Rational points}. As before, let us look now at integral values at {\it rational} points $(x,y)\in\Q^2$. If $q$ is such a value, then the conic $x^2-qxy+y^2=az^2$ has a rational point in $\P_2(\Q)$. This exists if and only if there are points over $\R$ and all $\Q_p$. The conic is equivalent to $x^2-(q^2-4)y^2=az^2$. If $|q|\le 1$ there are real points if and only if $a\ge 0$. If $q=\pm 2$ the conic is degenerate and easy to study, so suppose $|q|>2$. There are always real points, and there are points over $\Q_p$, for all primes $p$ not dividing $2(q^2-4)a$. We may also assume $a$ squarefree.  This gives local conditions for the integer values, which we do not make explicit for brevity. Just to mention an instance, let $a=-1$, then the condition is simply that $q^2-4$ is a sum of two rational squares, which amounts to $q^2-4$ being a sum of two integral squares. In turn, this easily leads to the formulae $q={1\over 2}(d+{u^2+4\over d})$, where $u$ is any integer and $d$ is a divisor of $u^2+4$ such that $d$ and $(u^2+4)/d$ have the same parity.  In particular this easily shows that the integer values at rational points form a set which is not thin. (The same may be easily proved for any nonzero integer $a$.)

It may be also  easily proved that there are rational sections defined over $\Q$  if and only if $a$ is a square (this would amount to nonzero polynomials $x,y,z\in\Q[q]$ such that the equation is identically verified, and  one may look at leading coefficients). 

\medskip

{\tt Geometry}.  To conclude this example, let us briefly discuss its geometry, assuming $a\neq 0$. We may blow up $\P_2$ at the four points of intersection $P=Q=0$, i.e. $(0\pm\sqrt a)$, $(\pm\sqrt a,0)$, obtaining a  blown-up surface $Z$ with the usual morphism $\pi:Z\to \P_2$, and a regular map $F:Z\to \P_1$  such that $F=f\circ \pi$ out  of the four exceptional divisors.  The integral values of $f$ correspond to the integral points of $Z$ with respect to the divisor $F^*(\infty)+D$, where $D$ is the pull-back of the line at infinity on $\P_2$ if we look at integer points $(x,y)$, and $D=0$ otherwise.  On the other hand, $F^*(\infty)$ is the proper transform of the lines $xy=0$.  The canonical class $K_Z$ of $Z$ is $-3L+E$ where $E$ is the sum of the four exceptional divisors and $L$ is the pull-back of a line.  So $K_Z+F^*(\infty)+D=D-L$, which is $0$ in the first case (we obtain a so-called {\it log-K$3$ surface)}, $-L$ in the second case.  Vojta's conjectures well account for `many' integral values, at any rate over some number field and outside a suitable finite set $S$ of places.

We note that if $q$ is a value we have  the equation $f=q$, i.e. $P(x,y)=qQ(x,y)$, which defines a cubic in the variables $x,y,q$. This yields another (equivalent) viewpoint. 

This last analysis is similar for the next examples, so we shall omit it from now on.

Our next example comes as a slight generalisation of a well-known Olympiad problem (indeed, the case $a=0$ appeared in Problem 6 in the 29-th Intern. Math. Olympiad, Canberra, 1988 - see \cite{DJMP}).

\begin{example}\label{EX.2}: {\bf Divisibility of $x^2+y^2-a$ by $xy+1$}. Now we let $P(x,y)=x^2+y^2-a$, $Q(x,y)=xy+1$. Again, if $q\in\Z$ is a value we have $x^2-qxy+y^2=a+q$, or $(2x-qy)^2-(q^2-4)y^2=4(a+q)$. Suppose $|q|>2$. Then if this conic has an integral point the minimal solution has (as in the previous example) $|y|^2\le 2 |a+q|{q+\sqrt{q^2-4}\over q^2-4}$. 

 In particular, for large $|q|$ we have $|y|\le 2$. For $y=0$ we obtain the values $q=x^2-a$, $x\in\Z$, hence a thin set. For $|y|=1,2$, we obtain that $y^4-ay^2+1$ is divisible by $xy+1$, hence $|x|$ is bounded, and we obtain only finitely many values for $q$.  It follows {\it a fortiori} that the set  of corresponding integral points is thin (see especially \cite{S}, Ex. 2, p. 20). However this set is Zariski-dense, as is easy to check: for $q=m^2-a$, we have the solution $(m,0)$,  and the units $({q+\sqrt{q^2-4}\over 2})^r$, $r\in\Z$, provide an infinity of solutions with the same $m$. On varying $m,r$ we obtain a Zariski-dense set. 
 
 \smallskip
 
 $\bullet$ {\bf Olympiad problem}. The Olympiad problem referred to above is the case $a=0$ and will be relevant in \S \ref{S.simply-connected-surface} below. In this case we can be more explicit.  We find that if $|q|>2$ we have $|y|^2\le 3$ hence $|y|=0,1$ for a  solution with minimal $|y|$.  For $y=0$ (in such a solution) we find $q=x^2$, a perfect square. For $|y|=1$, we have that $x\pm 1$ divides $x^2+1$, hence  $x\pm 1$ divides $2$, and we find that the ratio $q$ must be one among $1, - 5$.

\smallskip

We leave it to the interested reader to prove that there are no rational sections (for the affine conic pencil) defined over $\Q$. 

\smallskip

{\tt Rational points}. As to integral values at {\it rational } points, things are more involved than the previous example.  As remarked in Ex. 2.5, p. 103 of \cite{Z2}, already for $a=0$  the local conditions give rise to difficult problems if we look for the exact distribution of the integral $q$ (see therein for more details). If we merely seek `many' such values $q$, a simple and elegant  argument of {\sc Heath-Brown} suffices. We reproduce it from Ex. 2.5, p. 40 of \cite{Z2} (where it appeared after kind permission of {\sc Heath-Brown}) in generalised form, suitable to show that the integral values form a set which is not thin.  

We let $x=u/v$, $y=2/v$ for integers $u,v$. We need $u^2+4-av^2=q(v^2+2u)$, so 
$u^2-2qu+4=(q+a)v^2$; we want that $v^2+2u$ divides $u^2+4-av^2$. Putting $w:=u-q$  this reads $w^2+4-av^2=q^2+qv^2$, and solving for $q$ we want 
$\Delta:=v^4+4(w^2+4-av^2)=v^4-4av^2+4w^2+16$ a square, say $=(s+2w)^2$.  This translates into $2sw=v^4-4av^2+16-s^2$. Finally, for solving this in integers we merely need that the right-hand  side $v^4-4av^2+16-s^2$ is divisible by $4s$. In turn, let $v$ be an odd integer and $s$ be a divisor of $v^4-4av^2+16$; then  $s$ must be odd and the sought divisibility holds, since $v^4-s^2$ is divisible by $4$. We can now choose freely $v$ and the divisor $s$ (for instance among the infinitely many odd prime divisors of the integers $v^4-av^2+16$, $v$ odd), obtaining (as is easy to check) a non-thin set.  (We also refer here to {\sc Coccia}'s paper \cite{Coccia} for examples with cubics.)
 \end{example}

\subsection{Proof of Theorem \ref{T.divisibility}} 
 
The idea of the proof is the following: starting from one solution $(x_0,y_0)$ as in the statement, letting $m=Q(x_0,y_0)$ and using the fact that the equation $Q(x,y)=m$ defines a hyperbola with irrational asymptotes and at least one integral points, we obtain the existence of an affine automorphism of infinite order of $A^2$ conserving such hyperbola and producing infinitely many integral points $(x,y)\in \Z^2$  with $Q(x,y)=m$. Such an automorphism has finite order modulo $m$, so one of its powers $\Phi$ induces the identity modulo $m$. This implies that the image $\Phi(x_0,y_0)=:(x_1,y_1)$ satisfies $P(x_1,y_1)\equiv P(x_0,y_0)\equiv 0 \pmod m$. Hence $(x_1,y_1)$ is also a solution to our divisibility problem. The orbit of $(x_0,y_0)$ under  $\Phi$ is infinite (because $(x_0,y_0)$ is not the centre of symmetry of the relevant hyperbola) and we obtain infinitely many solutions to the divisibility problem. The values of $q$ in such a sequence tend to infinity in view of the positiviey assumption  on the polynomial $P(x,y)$. Now, for every sufficiently large $q$, the curve of equation $P(x,y)-qQ(x,y)=0$ will also be a hyperbola and for infinitely many such $q$ its asymptotes will be irrational. Applying automorphisms of such an hyperbola, we obtain infinitely many solutions with the same values of $q$. Putting this together, we obtain a Zariski-dense set of solutions.

 To prove our assertion concerning the Hilbert Property, we follow Coccia's paper \cite{Coccia}, which in turn was inspired by our previous work \cite{CZhilb}.
 
We consider the surface $S$ obtained by blowing up the four intersection points of the conics of equation $P(x,y)=0$ and $Q(x,y)=0$ in the projective plane. It is a Del Pezzo surface of degree $5$. The divisibility problem is solved by the integral points on $S$ with respect to the union of the pull-back of the line at infinity and the  strict transform of the conic $Q(x,y)=0$. 

 We apply  Theorem 1.7 from \cite{Coccia}. The double family of rational curves on $S$ appearing in teh statement of Theorem 1.7 of \cite{Coccia} is represented  by the strict transform of the conics passing through the blown-up points and the pull-back of the pencil  of conics generated by $Q(x,y)=0$ and the double line at infinity. 
 
 We  leave to the reader to check that all the hypotheses appearing in Theorem 1.7 of \cite{Coccia} are satisfied, apart the one (unnecessary) that the intersection product between any two rational curves of the two families is $2$ (in our case it is $4$, but Coccia's proof works whenever this product is $\geq 2$). 
 
 \bigskip

\begin{example}\label{EX.3}: {\bf Divisibility of $x^2+y^2$ by $x^2-2y^2-1$}.  Theorem \ref{T.divisibility} can be applied, noticing that the hypothesis on the numarator and denominator are satisfied and the point $(1,1) $ is an admissible solution; we obtain that the solutions $(x,y)$ form a Zariski-dense non-thin set of the plane.

We want, however, to add a couple of remarks specific to this special case.

\smallskip

$\bullet$ {\tt   Some congruence conditions}. We note that we have to produce integers $q$ such that
the equation  $(q-1)x^2-(2q+1)y^2=q$ has an integral solution $x,y$.  This yields, to start with, some local conditions for solvability, on which we rapidly comment. 

Indeed,  in particular, if $p$ is an odd prime dividing $q-1$, so $q\equiv 1\pmod p$, we have, from the existence of an integral point, that $-3\equiv -q(2q+1)$ is a quadratic residue modulo $p$, hence $p\equiv 1\pmod 3$  (note that $p=3$ is impossible). Also, if $p$ divides $2q+1$ then $4q(q-1)\equiv 3\pmod p$ must be  a quadratic residue mod $p$.  Finally, if an odd  $p$ divides $q$ exactly, then $p$ cannot divide $xy$ and $-1$ is a quadratic residue, hence $p\equiv 1\pmod 4$.  So in particular each among  $q$, $q-1$ and $2q+1$ is  composed   only of special primes, which itself is a nontrivial constraint. 

Further, suppose  that $q$ is even and $>0$,  so  $q-1$ is odd and then $q-1\equiv 1\pmod 3$ by reciprocity. Also, $2q+1\equiv 1\pmod 4$ and Jacobi reciprocity yields $2q+1\equiv 1\pmod 3$, a contradiction. Hence $q$ is odd, and therefore $\equiv 1\pmod 4$. Also, $y$ must be odd, so $(2q+1)y^2+q\equiv 0\pmod 4$.

\smallskip

$\bullet$  {\tt A continued fraction}. As a final remark before the proof, we observe that a solution  (with $q>1$, $x,y>0$) yields $x^2-\delta y^2={q\over q-1}$, where $\delta ={2q+1\over q-1}$, and this entails 
$0<x-y\sqrt\delta={q\over (q-1)|x+y\sqrt\delta|}<{q\over 2(q-1)\sqrt\delta}y^{-1}$. Since $(q-1)\sqrt\delta\ge q$, it is well known that this implies that $x/y$ is a convergent to the continued fraction for $\sqrt\delta$. 

\medskip

\end{example}

\smallskip

 

\section{A simply connected affine surface without the Hilbert Property }\label{S.simply-connected-surface}
 
In this  section we prove Theorem \ref{T.dense-and-thin}. More precisely,  we use Example \ref{EX.2} to exhibit a simply connected affine surface defined over $\Q$, having a Zariski-dense set of integral points over $\Z$, but without the Hilbert Property for these integral points: indeed, such that the integral points all lie in the image of the integral points from a connected covering of degree $\geq 2$ plus a single extra point. 

In the paper \cite{CZhilb}, this issue appeared as  a special case of a general  problem  about the Hilbert Property holding or not  for simply connected varieties (which was shown to be a necessary condition). The present result  gives a negative answer, at any rate restricting to  the integral points. 

The surface is given in a very explicit way, namely as the smooth affine surface $S$ defined in $\A^3$ by the equation
\begin{equation}\label{E.S}
S:\qquad x^2+y^2=z(xy+1)\qquad\subset\quad  \A^3.
\end{equation}

However we point out that     very probably, in our view, 
the answer to the said problem is affirmative if we allow a finite extension of the ground field (i.e., the Hilbert Property over a suitable number field should hold  for simply connected varieties). This is certainly true for the present example, and is not difficult  to check: it suffices that the unit group is infinite and then one can use for example  the map $\G_{\rm m}^2\to S$ given by $(u,v)\mapsto (u, u^{-1}(v-1), v^{-1}(u^2+u^{-2}(v-1)^2)$ to construct a non-thin set.  The issue also remains to establish whether an example similar to the present one exists (over $\Q$) for a smooth {\it projective} surface. 

\medskip

Let us now go ahead, to prove Theorem \ref{T.dense-and-thin} above. 

We start with a lemma, which proves the sought topological property.

\begin{lemma}
The smooth affine surface $S$ defined in $\A^3$ by the equation \eqref{E.S}
is simply connected, that is, $S(\C)$ is a simply connected topological space.
\end{lemma}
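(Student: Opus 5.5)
Since $S$ is the graph-like locus $z=(x^2+y^2)/(xy+1)$ away from the curve $xy+1=0$, we study $S$ through the projection $\pi:S\to\A^2$, $(x,y,z)\mapsto(x,y)$. Over the open set $U=\A^2\setminus\{xy=-1\}$ the map $\pi$ is an isomorphism, with inverse $z=(x^2+y^2)/(xy+1)$. Over a point of the hyperbola $H=\{xy+1=0\}$ the fibre is empty unless $x^2+y^2=0$ as well. The intersection $\{xy=-1,\ x^2+y^2=0\}$ consists of the four points $(\pm1,\pm i)$ with $xy=-1$, namely $(1,i)$, $(-1,-i)$, $(i,1)$, $(-i,-1)$, up to the appropriate sign choices. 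Over each of these four points the fibre is an entire line $\A^1$ (the $z$-axis). Hence
\[
S(\C)=U(\C)\ \sqcup\ L_1\sqcup L_2\sqcup L_3\sqcup L_4,
\]
with each $L_j\cong\C$. This is the standard picture of an affine modification: $S$ is obtained from $\A^2$ by blowing up the four points of $H\cap\{x^2+y^2=0\}$ and then removing the strict transform of $H$. We first check that $S$ is smooth, which the paper already asserts and which follows from a quick gradient computation. Smoothness guarantees that the blow-up description is valid near each $L_j$, i.e.\ that $S$ is locally the exceptional curve minus one point.

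The fundamental group is then computed by van Kampen. The group $\pi_1(U(\C))$ for $U=\C^2\setminus H$, with $H\cong\C^*$ a smooth irreducible conic, is generated by a single meridian loop $\gamma$ around $H$. Indeed, the complement of a smooth irreducible affine curve that is transverse to the line at infinity has abelian fundamental group (Zariski, or directly via the fibration $xy\colon U\to\C\setminus\{-1\}$, whose fibres are connected). Since $H_1(U)\cong\Z$, we get $\pi_1(U)\cong\Z$.

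It remains to see that adding back one line $L_j$ kills $\gamma$. Near a point $p_j\in H$, the surface $S$ is the blow-up at $p_j$ with the strict transform of $H$ removed. We take a small disc $\Delta$ in $S$ transverse to $L_j$ at a point of $L_j$ not on the strict transform. Its image in $\A^2$ is a small smooth disc through $p_j$ in a direction different from that of $H$, and its boundary circle maps into $U$. That boundary links $H$ once, so it represents $\gamma^{\pm1}$, and it bounds $\Delta\subset S$. Therefore $\gamma$ becomes trivial in $\pi_1(S)$.

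To make this rigorous we will apply van Kampen to $S=U\cup N$, where $N$ is a tubular neighbourhood of the $L_j$. Each component of $N$ is a disc bundle over $\C$, hence contractible, and its intersection with $U$ is homotopic to a circle generated by the meridian. Gluing in a contractible piece along such a circle kills $\gamma$, so $\pi_1(S)=\Z/\langle\gamma\rangle=1$.

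We expect the main obstacle to be the identification $\pi_1(\C^2\setminus H)\cong\Z$ together with the fact that the boundary of the transverse disc is exactly a meridian. To handle this we will use the coordinates $u=x$, $v=xy+1$. Near $p_j$ the curve $H$ is $\{v=0\}$ and $S$ is locally the chart $v=w\cdot(x^2+y^2)$ of the blow-up, which makes the linking-number-one claim explicit. For $\pi_1(U)$ we will use the fibration by $xy=c$ ($c\neq-1$), which has fibre $\C^*$ away from $c=0$. Alternatively, we can cite Nori's or Zariski's theorem for smooth curves meeting the line at infinity transversally; here $H$ meets it at two points transversally, so $\pi_1(U)$ is abelian, and is therefore $H_1(U)=\Z$.
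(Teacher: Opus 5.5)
Your proof is correct. Its first half agrees with the paper's: both identify $S\setminus(L_1\cup\dots\cup L_4)$ with $\A^2\setminus\{xy+1=0\}$, and both get $\pi_1\cong\Z$ from Zariski's theorem. The paper then uses that $\pi_1$ of this open subset surjects onto $\pi_1(S)$, so $\pi_1(S)$ is cyclic.

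The two routes differ in how the generator is killed. The paper works on the projective model $\tilde S$ and excludes cyclic covers of prime degree $p$ algebraically. Such a cover would come from a function $f$ with $(f)=pD+aA+bB$. Intersecting with the exceptional curves $E_i$ and with $A$, using $E_i\cdot A=0$, $E_i\cdot B=1$ and $A^2=1$, forces $p\mid b$ and $p\mid a$. This needs the simple connectivity of $\tilde S$ and the correspondence between cyclic covers and such divisors.

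You instead write down the relation explicitly. Take local coordinates $u=xy+1$, $s=x^2+y^2$ at $p_j$; they are transverse exactly because $S$ is smooth along $L_j$. In these coordinates $S$ is the chart $s=zu$ of the blow-up. The boundary of a normal disc to $L_j$ is $u=\epsilon e^{i\theta}$, which is a meridian of $H$. Van Kampen with the contractible tube around $L_j$ then gives $\pi_1(S)=\Z/\langle\gamma\rangle=1$.

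Your argument is local and purely topological, and needs neither the compactification nor intersection numbers. The paper's argument is phrased through divisors on $\tilde S$, in the style of the authors' earlier work, where boundary components are tracked through the intersection form.

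Two small corrections, neither affecting the argument. First, the points $(\pm1,\pm i)$ and $(\pm i,\pm 1)$ satisfy $xy=i$, not $xy=-1$. The four base points are actually $(\zeta,i\zeta)$ with $\zeta^2=i$ and $(\zeta,-i\zeta)$ with $\zeta^2=-i$. Only their number and the transversality are used, so nothing changes.

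Second, the remark that the map $xy\colon U\to\C\setminus\{-1\}$ has connected fibres does not by itself show that $\pi_1(U)$ is abelian. One way to complete it: over $xy\neq 0$ the map is the trivial bundle $\C^*\times(\C\setminus\{0,-1\})$. Adding back the two axes kills both the fibre loop and the loop around $c=0$, which leaves $\Z$. Alternatively, citing Zariski, or Fulton--Deligne for the nodal curve formed by the closure of $H$ and the line at infinity, is enough, as you also propose.
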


\begin{proof}
Let $S=S(\C)\subset \A^3(\C)=\C^3$ be the surface defined by the above equation. It is certainly irreducible, and rational.

We first note that the natural completion of the affine surface under the embedding $\A^3\hookrightarrow \P_3$ would produce a singular (projective) cubic surface (with an isolated singularity at the point $(x:y:z:w)=(0:0:1:0)$.) 

 We shall work, however, with another model, viewing $S$ as an open subset of a smooth  projective surface $\tilde{S}$, which we now construct.

Let $(x:y:t)$ be homogeneous coordinates in $\P_2$. Consider the two conics of equations
\begin{equation*}
\begin{matrix}
C_1: & x^2+y^2&=&0\\
C_2: & xy-t^2 &=& 0
\end{matrix}
\end{equation*}
Note that $C_1$ is singular at the point $(0:0:1)$ and that the intersection $C_1\cap C_2$ consists of four distinct  points $P_1,\ldots,P_4$, lying outside the line at infinity $t=0$. 
 We define the surface $\tilde{S}$ to be the projective plane $\P_2$ blown up at the four points $P_1,\ldots,P_4$. 

From equation \eqref{E.S} it follows that the regular function $z$ on $S$ is rationally expressed as $z=(x^2+y^2)/(xy+1)$; viewing $z$ as a rational function   $\A^2\dashrightarrow \A^1$, its indetermination locus is the set  $C_1\cap C_2$. Hence $z$  gives rise to  a regular map  to $\P_1$ on the blow-up of the {\it affine} plane $\A^2$ over $P_1,\ldots,P_4$. 
Then $\tilde{S}$ is simply connected because it is a smooth projective rational surface.

Now, it is easy to see that $S$ is isomorphic to the complement in $\tilde{S}$ of the union of the pre-image of the line at infinity in $\P_2$ with the strict transform of the conic $C_2$. 

We want to prove that this quasi-projective (indeed affine) surface is simply connected. We follow the arguments in \S 3 of \cite{CZadv}, especially the proof of Theorem 3 therein. Viewing $S$ inside $\tilde{S}$ as above, let $X\subset S\subset \tilde{S}$ be  the surface obtained by removing from $S$ the exceptional divisors lying above the points $P_1,\ldots,P_4$, hence $X$ is the affine plane $\A^2$ minus the conic $C_2$. As shown e.g. in Lemma 2 in \cite{CZadv}, the inclusion $X\hookrightarrow S$ induces a surjective homomorphism $\pi_1(X)\rightarrow \pi_1(S)$. Now, since $X$ is isomorphic to the complement of a line and a smooth conic in general position, its fundamental group is infinite cyclic, according to a theorem of Zariski (see e.g. \cite{SeBourbaki}). 

It follows that the fundamental group of $S$ is also cyclic. To conclude, it suffices to exclude the existence of non-trivial cyclic covering of $S$.

 Now, finite cyclic covers of degree $p$ (say $p$ prime, for simplicity) are associated to rational functions $f$ whose principal  divisor  is of the form $(f)=pD$, where $D$ is a non-principal  divisor.  Since $\tilde{S}$ is simply connected, any unramified cover of $S$   of degree $>1$ would come from a cover of $\tilde{S}$ ramified   above (at least one of)  the two components at infinity, say $A$ and $B$, where $A$ is the pre-image of the line at infinity and $B$ the strict transform of the conic $C_2$. Hence we are looking for a prime $p$, a rational function $f\in \C(\tilde{S})$, a divisor $D$ on $S$ and two integers $a,b$, not both divisible by $p$, such that
\begin{equation*}
(f)=pD+aA+bB
\end{equation*}
Recall that the four exceptional divisors $E_1,\ldots,E_4$ on $\tilde{S}$   over the points $P_1,\ldots,P_4$ satisfy (for each $i=1,\ldots,4$) $E_i\cdot A=0$, $E_i\cdot B=1$. Since $(f)\cdot E_i=0$, it follows that $p|b$. Since also $(f)\cdot A=0$ and $A^2=1$, it follows that $p|a$, which contradicts the fact that not both $a$ and $b$ can be divisible by $p$.

\end{proof}
\medskip

\begin{proof}[Proof of Theorem \ref{T.dense-and-thin}]  We take $S$ as the surface  given by \eqref{E.S},  which  is simply connected by the lemma. 

To prove the rest, we invoke   Example \ref{EX.2} above, telling  that the set $S(\Z)$ of  integral points  of $S$ is  Zariski-dense and such that  for each $(x,y,z)\in S(\Z)$ either $z=-5$ or  $z$ is  a perfect square (this second conclusion, holding for all positive choices  of $x,y$, is the content of the mentioned IMO Problem (see \cite {DJMP})). Hence all   integral points but those in the    curve  determined in $S$ by $z=-5$,  lie in the image of the set $X(\Z)$ of integral points from  the double cover $X$ of $S$ given by $z=w^2$ (defined in $\A^3$ by $x^2+y^2=w^2(xy+1)$).
The cover is clearly irreducible so this situation violates the Hilbert Property (over $\Q$)  for the points in $S(\Z)$. 
  \end{proof}
 
The main difference between this example and the situation considered in Theorem \ref{T.divisibility} consists in the fact that the zero set of the denominator in the expression $(x^2+y^2)/(xy+1)$ is a hyperbola with {\it rational} asymptotes.  This fact prevents the application of the double conic fibration method  which would provide the Hilbert Property.

\section{Common values of rational functions} 
In this section we consider {\it common values} of two rational functions at integral or rational points. Namely, given two algebraic varieties $X,Y$ defined over a number field $k$ and two rational functions $f\in k(X)$, $g\in k(Y)$. we shall be interested in cases where $f(X(k))=g(Y(k))$, or one inclusion bewteen such sets holds, or at least $f(X(k))\cap g(Y(k))$ is infinite.

\smallskip

The case when $X=Y=\P_1$, or $X=Y=\A^1$ and the functions $f,g$ are polynomials, has been extensively studied, in particular in works of {\sc Bilu-Tichy} \cite{BT} and of {\sc Avanzi } and the second author \cite{AZ1}, \cite{AZ2}. More recent work is due to {\sc Zieve} (unpublished) and to {\sc Behanjaina, K\"onig, Neftin} \cite{BKN} (solving in particular an old question of {\sc Davenport-Lewis-Schinzel}).

 In these cases, it can be shown  that the inclusion $f(X(k))\subset g(Y(k))$ leads to the existence of  rational functions $\phi,\psi \in k(t)$ such that $f=\phi\circ g\circ \psi$. 
The classification of pairs of rational functions $f,g$ such that the intersections $f(\P_1(k))\cap g(\P_1(k))$ is infinite leads to delicate geometric problems on the irreducibility of the algebraic curves of equation $(f(x)-g(y))$ or $(f(x)-f(y))/(x-y)=0$. 
 
\smallskip

We shall concentrate now mainly on the case $X=Y=\A^2$.  We then choose two rational functions $f(x,y), g(z,w)$ in two variables and look for common values of these functions at integral or rational points. 
While the case in which one of the two functions has degree $1$ in either $x$ or $y$ is trivial for rational points, in all other cases we are led to  interesting problems in Diophantine geometry. 

\medskip

We restrict our attention to the case of {\it homogeneous} functions, namely quotients of homogeneous forms; also, we restrict to the case where the degrees of the two rational functions are the same. 

When such  degree is zero, the   problem reduces to the above one with functions of one variable; as mentioned, this case has been treated in \cite{BT}, \cite{AZ1}, \cite{AZ2}.

\subsection{Common values of homogeneous functions of degree 1}

Then,  the first  problem we shall consider here arises when these rational functions have degree $1$; the most basic case is when  one of the form, say the numerator, has degree $2$ and the other has degree $1$. We shall then study equations of the form
\begin{equation}\label{E.common-values1}
\frac{q_1(x,y)}{l_1(x,y)}=\frac{q_2(z,w)}{l_2(z,w)}
\end{equation}
where $q_1,q_2$ are quadratic forms and $l_1,l_2$ are linear. We are looking for  pairs of vectors $(x,y),(z,w)\in\Z^2$ satisfying the above equation. We are also interested in finding pairs of {\it primitive} vectors satisfying the same equation. Theorem \ref{T.Cayley} below  will show that the distribution of such pairs of integer vectors is different in the two cases. 

We shall  see later how this problem is connected to  a certain version of the Dynamical Mordell-Lang conjecture (see the book \cite{BGT} for the general setting and numerous instances and see the  authors' article \cite{CZbilliards} for the specific case in question). 

\medskip

{\tt The geometry of the Cayley surface}. The equation \eqref{E.common-values1} can also be written in polynomial form as the equation of a cubic surface in $\P_3$ (interpreting $x,y,z,w$ as homogeneous coordinates):
\begin{equation}\label{E.Cayley}
l_2(z,w)\cdot q_1(x,y)=l_1(x,y)\cdot q_2(z,w).
\end{equation}
For general choices of $q_1,q_2,l_1,l_2$ we obtain the so-called {\it Cayley's nodal cubic surface}, i.e. a cubic surface having four isolated singularities. Letting $(\alpha_1:\beta_1), (\alpha_1':\beta_1')$ (resp. $(\alpha_2:\beta_2)$, $(\alpha_2':\beta_2')$) the zeros of the quadratic form $q_1(x,y)$ (resp. of $q_2(z,w)$), these singular points points are $(\alpha_1:\beta_1:0:0),  (\alpha_1':\beta_1':0:0)$, $(0:0:\alpha_2:\beta_2)$ and $(0:0:\alpha_2':\beta_2')$. The genericity conditions alluded to above, ensuring that there will be no other singularities, are: (1) the quadratic forms $q_1,q_2$ are non-degenerate and (2) they are not divisible by the corresponding linear forms. 

The desingularization of the above surface can be obtained as the blow-up of the plane over the six vertices of a complete quadrilateral: namely, starting from four lines in general position on $\P_2$, blow up the six intersection points among the six pairs of the four lines; the linear system of cubics passing through the six points provides a rational map $\P_2\to\P_3$, regular on the blown-up surface,  whose image is a cubic surface; the map  contracts the four given lines to four singular points. It is clear from this description that all these surfaces are pairwise isomorphic over $\C$. Such singular cubic surfaces contain nine lines, instead of the twenty seven lying on any smooth cubic surface. The surfaces defined over $\Q$ by an equation of the form \eqref{E.Cayley} always contain at least three lines defined over $\Q$ (as shown in the proof of the following theorem).

\smallskip

The general theorem we can prove concerning integral solutions of equation \eqref{E.common-values1}  reads

\begin{theorem}\label{T.Cayley}
Let $q_1(x,y),q_2(z,w)$ be quadratic forms with integral coefficients and $l_1(x,y),l_2(z,w)$ be linear forms with integral coefficients such that the surface of equation \eqref{E.Cayley} admits only four singular points. Then   the integral solutions $(x,y,z,w)\in\Z^4$ to the equation \eqref{E.common-values1}   form a  Zariski-dense set of rational points in the variety defined by \eqref{E.common-values1}.

On the contrary, the set of pairs of primitive vectors $(x,y), (z,w)\in\Z^2$ which solve equation \eqref{E.common-values1} define a subset of rational points on the surface which is contained in finitely many conics.
\end{theorem}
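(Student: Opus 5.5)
The plan has two parts: density for arbitrary integer solutions, and finiteness of conics for primitive ones. The first part uses homogeneity; the second reduces to a gcd estimate and a Diophantine equation in $S$-units.

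\emph{Density.} Since equation \eqref{E.Cayley} is homogeneous, every rational point of the cubic surface $V$ lifts to an integral point of the cone after clearing denominators. So the integral solutions of \eqref{E.common-values1} are Zariski-dense as soon as the rational points of $V$ are Zariski-dense, away from the locus $l_1=l_2=0$ where the ratios are undefined.

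The three rational lines come from the factorisation. The plane $l_1(x,y)=0$ meets $V$ where $l_2(z,w)q_1(x,y)=0$. This gives the line $\{l_1(x,y)=0,\ l_2(z,w)=0\}$, together with the conic $\{l_1=0,\ q_1(x,y)=0\}$, which is a double line through a singular point. Similarly we get the lines $\{x=y=0\}$ and $\{z=w=0\}$, all defined over $\Q$.

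Projecting from the rational line $L=\{x=y=0\}$ gives a conic bundle $V\dashrightarrow\P_1$, $(x:y:z:w)\mapsto(x:y)$. Over $(x:y)=(x_0:y_0)$ the residual curve is the conic
\[
l_2(z,w)\,q_1(x_0,y_0)\,t=l_1(x_0,y_0)\,q_2(z,w),
\]
in the variables $(z:w:t)$. This conic contains the rational point $t=1,\ z=w=0$, coming from the other rational line $\{z=w=0\}$, which is a section. Hence every fibre is a rational conic with a rational point. A fibration by $\Q$-rational curves over $\P_1(\Q)$ then has Zariski-dense rational points, which proves the first claim.

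\emph{Primitive vectors.} Suppose $(x,y)$ and $(z,w)$ are primitive. We have $\gcd(q_1(x,y),l_1(x,y))\mid\operatorname{Res}(q_1,l_1)$ up to a bounded factor, and likewise for $q_2,l_2$. Because $q_i$ is not divisible by $l_i$, these resultants are nonzero. So the reduced fractions $q_1/l_1$ and $q_2/l_2$ have denominators equal to $l_1$ and $l_2$ up to divisors of a fixed integer $R$. Equating the two reduced fractions forces
\[
l_1(x,y)=u\,l_2(z,w),\qquad q_1(x,y)=u\,q_2(z,w),
\]
with $u$ a rational number supported on the primes dividing $R$ and of bounded height. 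Thus $u$ ranges over a finite set. This is the main step, and I expect it to need care: one has to check that cancellations are indeed bounded by the resultants, and to handle the sign.

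For each fixed $u$, the two equations define the intersection of $V$ with a quadric, and most of that intersection is a curve in the cone. More precisely, the first equation is a plane section, which meets $V$ in a plane cubic curve; the second equation then cuts out a subscheme of that cubic. The points we want lie on the intersection of the quadric cone $q_1(x,y)=u\,q_2(z,w)$ with the plane $l_1=u\,l_2$, which is a conic $C_u\subset V$. Since there are finitely many $u$, all primitive solutions lie on finitely many conics, as claimed.
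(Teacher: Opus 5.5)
Your proof is correct, and it follows the paper's outline: conic pencils cut out by planes through the skew rational lines $r_1=\{x=y=0\}$ and $r_2=\{z=w=0\}$ give density, and finiteness of the values of $\varphi=l_1/l_2$ on primitive solutions gives the second part. Two differences are worth recording.

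\emph{Density.} You take the rational point on each conic from the section $r_2$. The paper instead says to take rational points $P$ on $s=\{l_1=l_2=0\}$ and use the conics in the planes spanned by $P$ and $r_1$ (resp.\ $r_2$). As literally written, that step does not work. The line $s$ meets $r_1$ at $(0:0:a_2:b_2)$, where $l_2(a_2,b_2)=0$. Hence the plane spanned by $r_1$ and any $P\in s\setminus r_1$ is always $\{l_1(x,y)=0\}$, and this plane cuts the surface in $s+2r_1$, not in a smooth conic. Your use of $r_2$ as a section of the pencil through $r_1$ is exactly what makes the density argument go through.

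\emph{Primitive solutions.} The paper argues Runge-style on the desingularisation. There $\varphi$ vanishes along $r_1$ and has its pole along $r_2$, so at points integral with respect to $r_1+r_2$ it is an $S$-unit with bounded exponents. Your bound $\gcd\bigl(q_i(v),l_i(v)\bigr)\mid \mathrm{Res}(q_i,l_i)$ for primitive $v$ is an explicit, elementary version of the same fact. It gives $u=l_1/l_2=\pm g_1/g_2$ in a finite set. Your conic $\{l_1=ul_2,\ q_1=uq_2\}$ is the paper's fibre of $\varphi$; the full plane section $\{l_1=ul_2\}$ of the surface is $s$ plus this conic.

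\emph{Minor imprecisions.} Neither affects the argument.
\begin{itemize}
\item Not every fibre of your conic bundle is a smooth conic. The conic $t\,q_1(x_0,y_0)\,l_2(z,w)=l_1(x_0,y_0)\,q_2(z,w)$ is nondegenerate exactly when $q_1(x_0,y_0)\,l_1(x_0,y_0)\neq 0$, which excludes only finitely many fibres.
\item The locus to discard is $\{l_1l_2=0\}$, i.e.\ $r_1\cup r_2\cup s$, not just $\{l_1=l_2=0\}$.
\end{itemize}
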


We note that by a solution to equation \eqref{E.common-values1} we automatically mean a solution with $l_1(x,y), l_2(z,w)\neq 0 $. 

\smallskip

\begin{proof}
Concerning the first part of the theorem, note that the integral solutions to equation \eqref{E.common-values1} correspond to rational points on the cubic surface of equation \eqref{E.Cayley}. Such a surface  always contains at least three lines defined over $\Q$, namely the lines $r_1,r_2,s$ of equation 
$$
r_1:\, x=y=0, \qquad  r_2:\, z=w=0, \qquad s:\, l_1(x,y)=l_2(z,w)=0.
$$
Note that $r_1,r_2$ are  skew lines. Each of the two  lines $r_1,r_2$ defines a pencil of planes on $\P_3$, hence a pencil of conics on the surface. Every rational point $P$ on $s$, outside $r_1\cup r_2$,   is contained in two smooth conics, one on the plane generated by $P$ and $r_1$, one on the plane generated by $P$ and $r_2$. Each such conic admits then infinitely many rational points.  Moving the point $P$ on $s$ we obtain a Zariski-dense set of rational points on the cubic surface, hence the first conclusion of the theorem.

\smallskip

Concerning the second conclusion, we note that the primitivity of the vector $(x,y)$ amounts to the integrality of the corresponding rational point $(x:y:z:w)$ with respect to the line $r_1$, while the primitivity of the vector $(z,w)$ corresponds to integrality of the same rational point with respect to $r_2$.

Now, the second conclusion of our theorem states that such integral points cannot be dense on the surface defined by equation \eqref{E.Cayley}, and more precisely they lie on a finite union of  conics contained on that surface.

This fact can be proved via a Runge-like argument, by observing that the rational function $\varphi=l_1(x,y)/l_2(z,w)$, when viewed on the desingularitazion of the cubic surface, has a pole on the divisor $r_2$ and a zero on the divisor $r_1$. Hence if we calculate $\varphi$ at  rational points which are integral with respect to $r_1+r_2$, we expect these values to be units in $\Z$; actually, their numerator and denominators can be divisible by primes of bad reduction  (where the multiplicity are also bounded);  then $\varphi$ takes only finitely many values at such points,  so these points lie on finitely many fibers of $\varphi$, which correspond to conics on the surface.
\end{proof}
\medskip

Let us  analyze the following specific example, giving an idea of the general situation: 

\begin{example}\label{Ex.Cayley}
Consider the two rational functions 
$$
f(x,y)=\frac{x^2-2y^2}{x+y}, \quad g(z,w)=\frac{z^2-3w^2}{z+w}.
$$
{\it Then the set of lattice points $(x,y)\in\Z^2$ such that there exists a lattice point $(z,w)\in\Z^2$  with  $f(x,y)=g(z,w)$ is Zariski-dense on the plane.} On the contrary: {\it 
The set of pairs primitive vectors $(x,y), (z,w) \in \Z^2$ for which   $f(x,y)=g(z,w)$ is not Zariski-dense and is parametrized as
\begin{equation}\label{E.parametrization}
(x:y:z:w)=(2s^2+6ts+t^2:-2ts:4s^2+6ts+t^2:-2s^2-2ts)
\end{equation}
where $(s,t)\in\Z^2$ are coprime integers, with $t$ odd.}
 \end{example}

Note that although the solutions in pairs  of primitive vectors $(x,y),(z,w))$ to equation \eqref{E.Cayley} do not generate a Zariski-dense set on the surface, nevertheless the set of primitive vectors $(x,y)\in\Z^2$ for which there exists a primitive vector $(z,w)\in\Z^2$ such that $f(x,y)=g(x,y)$ is Zariski-dense in $\A^2$. 

\medskip

{\tt Application to the Dynamical Mordell-Lang Conjecture}. We now apply the results of Example \ref{Ex.Cayley} to a question concerning the dynamical Mordell-Lang conjecture.

 In \cite{CZbilliards} we proposed to study the following version of the Dynamical Mordell-Lang Conjecture: suppose given an algebraic surface, three algebraic curves on it  and a {\it commutative} semigroup of endomorphisms. What can be said in  case infinitely many orbits under the semigroup intersect all the three curves?

We proved some finiteness results, in some case interpreting them in terms of  trajectories on elliptic billiards. We also discussed about the condition of commutativity of the endomorphism semigroup.

Here we shall  show, by treating a specific case,  how  in the non commutative case the situation can be very different. Let us consider  the usual linear action of the group $\mathrm{SL}_2(\Z)$ on the plane $\A^2$. Consider the line $x=0$ and two conics passing through the origin, say those of equations
$$
x^2-2y^2=x+y,\qquad \mathrm{and}\qquad x^2-3y^2=x+y.
$$
 We claim that infinitely many orbits of points of $\A^2$ intersect all the three curves. We start from a solution $(x,y,z,w)$ of the equation $f(x,y)=g(z,w)$, where $f,g$ are as in the previous theorem, with $(x,y)$ and $(z,w)$ primitive vectors of $\Z^2$; as we have just proved, there are infinitely many of them.  
Let $\lambda$ be the common value of $f(x,y)=(x^2-2y^2)/(x+y)$ and $g(z,w)=(z^2-3w^2)/(z+w)$, so $\lambda=(x^2+y^2)/(x+y)$ and $\lambda=(z^2-3w^2)/(z+w)$. The primitive vector $(x,y)$ can be completed to a unimodular matrix 
$$\left(\begin{matrix}x'& x\\ y' & y\end{matrix}\right)\in\mathrm{SL}_2(\Z)$$
 while the vector $(z,w)$ can be completed to a unimodular matrix $\left(\begin{matrix}z'& z\\ w' & w\end{matrix}\right)\in\mathrm{SL}_2(\Z)$.
 Then the point ${0\choose \lambda^{-1}}$,  lying on the line $x=0$ , is sent by multiplication by the first matrix to the point ${\lambda^{-1}x \choose \lambda^{-1}y}:={u\choose v}$ which satisfies the equation $u^2-2v^2=u+v$ of the first conic. Analogously, the second matrix sends the same point to a point of the second conic. Hence the points of the form ${0\choose \lambda^{-1}}$, with $\lambda$ as above, have an orbit intersecting each of the three curves.
 
 Note that by the parametrization \eqref{E.parametrization}, the set of such values of $\lambda$ is infinite, and so the corresponding points ${0\choose \lambda^{-1}}$ generate infinitely many orbits under the action of $\mathrm{SL}_2(\Z)$. 

\medskip

\subsection{Common values of homogenous functions of degree $2$}
The second case to be considered arises when the numerators have degree three and the denominator degree one. We obtain again an equation of the form \eqref{E.Cayley}, but now the homogeneous forms $q_1(x,y), q_2(x,y)$ are cubic forms, so the defined surface has degree $4$. 

Under generic assumptions, i.e. both cubic forms have distinct linear factors (in a factorization over the complex number fields) and the linear forms do not divide the corresponding cubic forms, this quartic surface admits exactly six singular points, lying on the union of three lines, three on each line. As for the Cayley cubic surface, the quartic surfaces of this kind are all pairwise isomorphic over the complex; they have been studied by Weddle in 1850, and are named after him.

The arithmetical result we can prove reads as follows:

\begin{theorem}\label{T.Weddle}
For suitable choices of linear forms $l_1,l_2$ and cubic forms $q_1,q_2$, the integral solutions to equation \eqref{E.Cayley} are Zariski-dense in the quartic surface defined by \eqref{E.Cayley}. On the contrary, the solutions in pairs of primitive vectors $(x,y), (z,w)$ lie in the union of finitely many curves of equation $l_1(x,y)=\lambda l_2(z,w)$ on the quartic surface.
\end{theorem}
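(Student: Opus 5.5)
The Weddle case will follow the same two-step scheme as the proof of Theorem \ref{T.Cayley}. Density will come from an elliptic fibration, and the primitive case from a Runge-type unit argument with the function $\varphi=l_1(x,y)/l_2(z,w)$.

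\emph{Zariski density.} The surface $W$ defined by \eqref{E.Cayley} with $q_1,q_2$ cubic contains the three rational lines
$$r_1:\ x=y=0,\qquad r_2:\ z=w=0,\qquad s:\ l_1(x,y)=l_2(z,w)=0.$$
Planes through $r_1$ are given by $(z:w)=$ const, and planes through $r_2$ by $(x:y)=$ const. A plane $z=\mu w$ cuts $W$ in the residual curve $l_2(\mu,1)q_1(x,y)=w^2\,l_1(x,y)\,q_2(\mu,1)$. This is a plane cubic with a node at $(0:0:0:1)\in r_2$, hence a rational curve. In fact it is parametrized by $(x:y)$: fixing $(x:y)=(a:b)$ determines $w^2$ up to scaling.

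A cleaner description uses both projections. The map $W\dashrightarrow \P_1\times\P_1$, $(x:y:z:w)\mapsto((x:y),(z:w))$, is generically $2:1$, with fibre given by
$$\frac{q_1(a,b)}{l_1(a,b)}\,t^2=\frac{q_2(c,d)}{l_2(c,d)}$$
for the scaling $t$ between the two vectors. So $W$ is birationally the double cover of $\P_1\times\P_1$ branched along the curve $F(a,b)G(c,d)$ with $F=q_1l_1$, $G=q_2l_2$ quartic forms. Rational points then correspond to pairs with $F(a,b)G(c,d)$ a square. I would choose the forms so that $F(a,b)=\square\cdot G(a,b)$ has many solutions. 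The simplest choice is $q_2l_2=\kappa\, q_1l_1$ up to a linear change of variables, for example $q_2=q_1$ and $l_2=l_1$. Then the diagonal-type curves $(c:d)=\gamma(a:b)$, for $\gamma$ in a suitable group of symmetries, give rational points. I would still need the choice to avoid extra singularities, so better is an independent mechanism.

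That mechanism: fix $(c:d)$ with $G(c,d)=g\neq0$. The fibre over it is the genus-one curve $u^2=g\,F(a,b)$. Choose the forms so that this curve has a rational point, e.g.\ $(a:b)=$ a root of $l_1$, which lies on the contracted line $s$. Then it is an elliptic curve $E_g$, a quadratic twist of a fixed $E$. Varying $(c:d)$ gives infinitely many twists, and for generic rational $(c:d)$ a suitable point has infinite order (Néron specialization / the twisting trick in the Lemma above). Infinitely many fibres with infinitely many rational points give Zariski density. Integral points of $\Z^4$ are then just scaled rational points of $\P_3$.

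\emph{Primitive solutions.} As in Theorem \ref{T.Cayley}, primitivity of $(x,y)$ and of $(z,w)$ means integrality with respect to $r_1$ and $r_2$. On a desingularization, $\varphi=l_1/l_2$ has divisor supported on (the transforms of) $r_1,r_2$. Indeed, from \eqref{E.Cayley} we have $\varphi=q_1/q_2$, so where $l_1=l_2=0$ (the line $s$) the function is regular and nonzero generically. Hence at points integral with respect to $r_1+r_2$, $\varphi$ is an $S$-unit with bounded exponents, $S$ the bad primes. It therefore takes finitely many values, which places the points on finitely many curves $l_1=\lambda l_2$.

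\emph{Main obstacle.} The main obstacle is checking that divisor computation carefully. It requires verifying the behaviour along $s$ and at the six singular points, three on each of $r_1$ and $r_2$, and that no further bad primes inflate exponents. I would do this explicitly on the blow-up, or simply by a gcd argument: $\gcd(l_1(x,y),l_2(z,w))$ divides a fixed resultant when both vectors are primitive, using $l_2q_1=l_1q_2$ and $\gcd(l_1,q_1)$ bounded.
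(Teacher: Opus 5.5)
The gap is in the density half, at the sentence ``for generic rational $(c:d)$ a suitable point has infinite order''. This is the whole difficulty, and neither tool you cite supplies it. The fibre over $(c:d)$ is the quadratic twist by $G(c,d)$ of $E_1\colon u^2=F(a,b)$. N\'eron or Silverman specialization needs a non-torsion section over $\Q(t)$. But the $\Q(t)$-points of the twist of the constant curve $E_1$ by $\Q(t,\sqrt{G(t,1)})$ are, modulo $E_1[2](\Q)$, the group $\mathrm{Hom}_\Q(E_2,E_1)$, where $E_2$ is the Jacobian of $v^2=G(c,d)$. So for general forms ($E_1,E_2$ not isogenous) the generic rank is $0$, and specialization gives nothing. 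The twisting trick of the Lemma only produces points on twists by values of $F$. Using it on the fibre over $(c:d)$ requires $G(c,d)\in F(a,b)\,\Q^{*2}$ for some $(a:b)$, which is exactly a rational point of the surface over $((a:b),(c:d))$, i.e.\ what you are trying to construct. Positive rank for infinitely many twists of $E_1$ by values of $G$ is a genuine rank problem that your sketch does not address. (Your first paragraph should also go. In $l_2(\mu,1)q_1(x,y)=w^2l_1(x,y)q_2(\mu,1)$ the linear term $q_2(\mu,1)l_1(x,y)$ survives at $x=y=0$, so that point is a smooth flex, not a node, and the plane sections have genus one, as you use later; were they rational, density would be trivial. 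Also, planes through $r_1$ are $(x:y)=\mathrm{const}$, not $(z:w)=\mathrm{const}$.)

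What is missing is an actual source of non-torsion rational points on infinitely many fibres. The paper takes one rational point off $r_1\cup r_2$ which is non-torsion in its fibres, and propagates it through the \emph{second} pencil using Silverman's specialization theorem. Alternatively, the choice you discarded works: $q_2=q_1$, $l_2=l_1$ creates no extra singularities. At a singular point, Euler's identity and the equation force $l_2q_1=l_1q_2=0$. Together with the vanishing gradient, this contradicts $q_i$ squarefree and $l_i\nmid q_i$ unless the point lies on $r_1\cup r_2$, so one still has exactly the six nodes. The fibre over $(c:d)$ is then the twist of $E_1$ by $F(c,d)$, and the diagonal point $(c:d:c:d)$ is precisely the twisting-trick point. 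It is the specialization of a non-torsion section (the generic point of $E_1$), hence non-torsion for all but finitely many $(c:d)$; this gives infinitely many fibres of positive rank and thus density. The primitive half is the paper's Runge argument, but your gcd sentence is misstated: $\gcd(l_1(x,y),l_2(z,w))$ is not bounded, since one curve $l_1=\lambda l_2$ can carry infinitely many primitive solutions. What $l_2q_1=l_1q_2$ and $\gcd(l_i,q_i)\mid R_i:=\mathrm{Res}(l_i,q_i)$ (at primitive vectors) actually give is $l_1(x,y)\mid R_1\,l_2(z,w)$ and $l_2(z,w)\mid R_2\,l_1(x,y)$, whence $\lambda\in\{R_1/m:\ m\mid R_1R_2\}$.
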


 {\it Sketch of the proof}.  A Zariski-dense set of rational points can be constructed as follows: consider, as before,  the skew lines  in $\P_3$ of equation 
$$
r_1: x=y=0,\qquad r_2: z=w=0.
$$ 
Each of them defines a pencil of planes whose intersections with the surface determines a pencil of cubic curves of genus one (it turns out that these curves are all twists of the Fermat cubic curve). The intersection of the generic plane passing through $r_1$ with $r_2$ determines a section of the first fibration and viceversa. Taking these sections for the zero-sections, we obtain a double elliptic fibration on the quartic surface $S$ of equation  \eqref{E.Cayley}. A generic point $P\in S  \setminus (r_1\cup r_2)$ lies in two elliptic surfaces (i.e. two smooth fibers, one for each fibration). Suppose $P$ is a rational point; unless $P$ is torsion for both elliptic curves,  one of the two fibers will contain infinitely many rational points. By a celebrated theorem of Silverman, only finitely many of them can be torsion for the second fibration; we then obtain infinitely many fibers each possessing infinitely many rational points, thus providing the Zariski-density of rational points. 

Of course we need a rational point $P\in S(\Q)$ outside the union of $r_1,r_2$ to start with; the points on the line $l_1(x,y)=l_2(z,w)=0$ do not work, since the corresponding fibers are both singular.

\smallskip

Concerning solutions with primitive vectors $(x,y)$ and $(z,w)$, the same integrality considerations used in the case of the Cayley cubic surface apply also to the Weddle quartic surface: a Runge-like argument shows that all integral points are contained in finitely many curves of equation $l_1(x,y)=\lambda l_2(z,w)$; however, now these curves now are (generically) genus one curves instead of conics. Again, there might be infinitely many such points.

\subsection{Pairs of lattices with vectors of equal length} 
Let us consider the following problem in plane geometry: \smallskip

{\it Problem: given two lattices $\Lambda, M\subset\R^2$, suppose that each circle centred in the origin intersects $\Lambda$ if and only if it intersects $M$. Is it true that $\Lambda,M$ are isometric?}

\smallskip

The answer is `almost' yes: Delaunay proved there is just one exception, up to obvious actions of the group of isometries and the group of homotheties, given by the hexagonal lattice $\Lambda$ and any of its index-two sublattices $M$. 

However, the problem changes is we consider only primitive vectors. In Delaunay's example,  it is not true that the circles intersecting $\Lambda$ in a primitive vector are the same circles intersecting $M$ at a primitive vector.  Indeed, if $\Lambda$ is realized as the ring of integers $\Z[\zeta]$ of the sixth cyclotomic field and $M$ the subgroup generated by $\zeta$ and $2$, the vectors of length $2$ on $\Lambda$ are not primitive, since the equation $x^2+xy+y^2=4$ does not admit primitive solutions $(x,y)\in\Z^2$), while the vector $2\in M$ is primitive as a vector of $M$.

\smallskip

\smallskip

We shall prove the following: 

\begin{theorem}\label{T.lattices}
Let $\Lambda, M\subset\R^2$ be two lattices such that the set of lengths of primitive vectors for the two lattices are the same up to possibly finitely many exceptions. Then $\Lambda$ is isometric to $M$. 
\end{theorem}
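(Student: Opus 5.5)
We translate the statement into a problem about the common values of two binary quadratic forms at primitive vectors, and then reuse the Runge-type integrality argument from the proof of Theorem \ref{T.Cayley}.

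Choose bases of $\Lambda$ and $M$. The squared length of a vector $x v_1 + y v_2 \in \Lambda$ is the positive definite real quadratic form $Q_1(x,y)$, and similarly $M$ gives $Q_2(z,w)$. Primitive vectors correspond to coprime pairs. The hypothesis therefore says that $Q_1(\mathrm{prim})$ and $Q_2(\mathrm{prim})$ agree outside a finite set.

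\emph{Step 1: rationality.} First we reduce to the case where $Q_1$ and $Q_2$ are proportional to forms with rational coefficients. The values $Q_1(1,0)$, $Q_1(0,1)$ and $Q_1(1,1)$, together with many further values, lie in the value set of $Q_2$ at primitive vectors. Comparing the growth of the sets $\{Q_i(v)\le T\}$, which is $\asymp T/\sqrt{\operatorname{disc}}$ for primitive vectors, shows that the discriminants agree up to the normalisation. Linear relations among the small values then force the ratios of the coefficients to be rational. I expect this step to be the fiddly one. If the forms were irrational, their primitive value sets would generically be linearly independent over $\Q$ beyond finitely many relations, and a Mordell--Lang type statement in the additive group should make this precise. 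After rescaling we may assume $Q_1,Q_2\in\Z[x,y]$ are primitive forms.

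\emph{Step 2: the surface.} Consider the quartic surface $V\colon Q_1(x,y)=Q_2(z,w)$ in $\P_3$, or equivalently the cone over it. Its rational points with both $(x,y)$ and $(z,w)$ primitive are integral with respect to the lines $r_1\colon x=y=0$ and $r_2\colon z=w=0$. The hypothesis says that every primitive value $n$ of $Q_1$, apart from finitely many, yields such a point. We now run the Runge/local argument in the style of Theorem \ref{T.Cayley}. Since the forms are definite and the vectors are integral, $\lvert x\rvert,\lvert y\rvert$ and $\lvert z\rvert,\lvert w\rvert$ are of comparable size. The points therefore concentrate near the real curve where $(x{:}y)$ and $(z{:}w)$ are related by an isometry of the real forms.

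\emph{Step 3: counting.} I would in fact conclude by a counting argument. The number of integers $n\le T$ represented primitively by $Q_i$ is $\sim c_i\,T/\sqrt{\log T}$, where $c_i$ depends on the genus of $Q_i$ through Landau--Bernays type asymptotics. Refined by congruence classes and by the Chebotarev splitting data of primes in the ring class field, these counts distinguish the classes of $Q_1$ and $Q_2$. Concretely, a prime $p$ is primitively represented by $Q$ only if $p$ splits completely in the ring class field of $\operatorname{disc} Q$ and $Q$ lies in the class of a prime ideal above $p$. Using Chebotarev, equality of the represented primes up to finitely many exceptions forces equal discriminants and the same class up to inversion, i.e. $Q_1\sim Q_2$ under $\mathrm{GL}_2(\Z)$. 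This is exactly isometry of the lattices.

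The main obstacle is the subtle case of Delaunay type, where non-equivalent forms represent the same set of integers. This case must be excluded precisely by primitivity. Using primes, which are automatically primitive values, handles it, because prime representation determines the form class up to inverse via class field theory.
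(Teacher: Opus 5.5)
\textbf{There is a genuine gap, at Step 3, which carries all the weight.} You claim that if two forms represent the same primes up to finitely many exceptions, then they have equal discriminants and the same class up to inversion. This is false, and the paper contains a counterexample. By Example \ref{Ex.same-primes}, $x^2+9y^2$ (discriminant $-36$, order $\Z[3i]$) and $x^2+12y^2$ (discriminant $-48$, order $\Z[2\sqrt{-3}]$) represent exactly the same primes, namely $p\equiv 1\pmod{12}$. Yet they are not equivalent. Chebotarev only sees the ring class field together with a set of Frobenius classes. Here both ring class fields equal $\Q(\zeta_{12})$ and both forms are principal, so the prime data coincide.

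Your criterion is also misstated. A prime splits completely in the ring class field only when the principal form represents it. In general one needs $p$ split in the quadratic field, with a prime above it lying in the class of $Q$ or of its inverse.

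So restricting to primes does not exclude the Delaunay-type phenomenon. The Landau--Bernays count cannot rescue this either. As you note, its constant depends only on genus-type data, and an asymptotic cannot separate a finite symmetric difference from an infinite one.

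\textbf{What is missing is the use of composite primitive values together with a propagation argument; this is the core of the paper's proof.} View the primitive values of $q_j$ as norms of ideals in a fixed class of an order $\mathcal{O}_j$ in an imaginary quadratic field $K_j$. There are two cases.
\begin{itemize}
\item If $K_1\neq K_2$, take prime ideals over primes that split in $K_1$ and are inert in $K_2$, chosen in appropriate classes via Chebotarev. Their products give infinitely many values of $q_1$ that $q_2$ cannot represent primitively. This is how $x^2+9y^2$ picks up $pq$ with $p,q\equiv 5\pmod{12}$.
\item If $K_1=K_2$, suppose some $k$ coprime to the conductors is the norm of an ideal $J$ in the class attached to $q_1$, but of no ideal in the class attached to $q_2$. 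Then $kN(\alpha)$ has the same property for every principal prime $(\alpha)$ of $\mathcal{O}_1\cap\mathcal{O}_2$ of prime norm not dividing $k$.
\end{itemize}
Hence a finite symmetric difference forces the primitive value sets to coincide exactly. Then the first three minima agree, and these determine the lattice up to isometry.

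Two smaller points. Step 1 is wrong as stated: the hypothesis does not force rationality, since $\Lambda=M$ with Gram form $x^2+\sqrt2\,y^2$ satisfies it. What you actually get is that the two value sets span the same $\Q$-vector space. When this span has dimension at least $2$, that case needs its own argument rather than a reduction to rational forms. Step 2 contributes nothing. The surface $Q_1(x,y)=Q_2(z,w)$ is a quadric, not a quartic, and every real point of it trivially has $(x,y)$ and $(z,w)$ of equal length.
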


\begin{proof} 

Let us consider first the case when the two values sets coincide exactly, i.e. the possible lengths of primitive vectors of $\Lambda$ and of $M$ are exactly the same, then  in particular the two lattices share the first three minima. In this case, it is known that they are isometric (see e.g. the ``Well lemma'' in {\sc Conway}'s monograph \cite{Conway}).

Suppose now that the two sets of values have finite symmetric difference. We prove that indeed they must coincide, so we can apply the previous argument based on the first minima.

We easily reduce to the case when all square lengths are integral (or rational, which amounts to the same up to applying a suitable homothety to both lattices (indeed,  when the vector space over $\Q$ generated by the square lengths has dimension $\geq 2$ the problem is easier, since we obtain more quadratic forms sharing the same values at integral points).

We can then suppose that $q_1(x,y)$ and $q_2(x,y)$ are integral valued definite quadratic forms such that the sets of their values at primitive lattices differ by a finite set.

We can also suppose the two quadratic forms are expressed by   {\it primitive} homogeneous polynomials, so that for each of the two forms no prime number divides all its values. 

It is well known that both quadratic forms represent infinitely many prime numbers (and necessarily they represent these numbers at primitive vectors).

We can find orders $\mathcal{O}_1,\mathcal{O}_2$ in rings of imaginary quadratic fields and ideals $I_1\subset \mathcal{O}_1$, $I_2\subset\mathcal{O}_2$, such that for $j=1,2$ the form $q_j$ represents the ratios $N((\alpha))/N(I_j)$, for $\alpha\in I_j$, i.e. the indices of principal ideals inside $I$. Tensoring with $I_j^{-1}$, one can view the values of $q_j$ as the norms of those ideals of $\mathcal{O}_j$ which belong to the class  of $I_j^{-1}$. 

 Let us first show that the quadratic number rings containing these orders are the same. Otherwise, we could find infinitely many rational primes splitting in the first number field and not in the second one. No product of such primes could be primitively represented by the second quadratic form; on the contrary, given a square-free number  $k>1$ represented by the first quadratic form,  factoring $k$ as $k=p_1\cdots p_t$ we find that the product $p_1\cdots p_t$ is representable. Each prime $p_i$ factors in the ring of integers of the first number field as $\mathcal{P}_i\overline{\mathcal{P}_i}$; taking other prime ideals $\mathcal{Q}_i$ in the same class of $\mathcal{P}_i$, which is possible by Dirichlet's theorem, we find that the norm of $\mathcal{Q}_1\cdots \mathcal{Q}_t$ is representable by the first form, not by the second, and this can be done for infinitely many choices of prime ideals.
 
We can then suppose that the two orders are contained in the same quadratic number field.
Let us now extend the previous argument to show that indeed the two orders $\mathcal{O}_1,\mathcal{O}_2$ coincide and the two ideals $I_1,I_2$ lie in the same class. 

Suppose that a number $k>0$, coprime with both the conductor of $\mathcal{O}_1$ and that of $\mathcal{O}_2$,  is represented by $q_1$ and is not represented by $q_2$. This means that there is an ideal $J$ in the class of $I_1^{-1}$ of norm $k$, but no ideal of such a norm  in the second class. We shall produce infinitely many other integers $k'$ which are represented bt the first form and not by the second.

Let $\mathcal{O}=\mathcal{O}_1\cap \mathcal{O}_2$ be the intersection of the the orders.

Let us factor $J$ into prime ideals as $J=\mathcal{P}_1\cdots \mathcal{P}_t$, we have $k=N(\mathcal{P}_1)\cdots N(\mathcal{}_t)$.  
Let $(\alpha)\subset\mathcal{Ø}$ be a principal prime ideal of $\mathcal{O}$, coprime with $J$.  Then the ideal $J'(\alpha)\cdot \mathcal{P}_1\cdots \mathcal{P}_t$ is in the same class as $J$, so its norm $k':=N(J')=N(\alpha)\cdot N(J)$ represented (primitively) by $q_1$, and for the same reason it is not represented by $q_2$. Since we can find infinitely many such elements $\alpha$, we find infinitely many integers represented by $q_1$ and not represented by $q_2$.
\smallskip

\end{proof}

We end by showing that it may happen that two non-equivalent quadratic forms represent exactly the same primes, as in the following example.

\begin{example}\label{Ex.same-primes}
Consider the two quadratic forms 
$$
q_1(x,y)= x^2+9y^2,\qquad q_2(x,y)=x^2+12y^2.
$$
The first one corresponds to a sub-lattice of index $3$ in the square lattice $\Z[i]$, the second one to an index $6$ sub-lattice of the exagonal lattice $\Z[\zeta]$. They represent the same (infinite) set of primes, namely the primes congruent to $1$ modulo $12$. Indeed, the quadratic form $q_1(x,y)=x^2+(3y)^2$ represents only primes congruent to $1$ $\pmod 4$ and each number represented, if not divisible by $3$, must be congruent to $1$ $\pmod 3$. Also, a prime which is congruent to $1$ modulo $12$ must be of the form $u^2+v^2$ where one between $u$ and $v$ is divisible by $3$. Hence all such prime numbers are represented by $q_1$.

Analogously, every prime number congruent to $1$ $\pmod 3$ is of the form $u^2+3v^2$, and, if it is also congruent to $1$ $\pmod 4$, $v$ must be even.

However, the two quadratic forms do not share the same values at primitive vectors: for instance, the first quadratic form represents all the products of the form $pq$, where $p,q$ are distinct primes congruent to $5$ modulo $ {  12}$, while the second one does not. On the contrary, the second quadratic forms represent all the product $pq$, for distinct prime numbers congruent to $7$ modulo $12$.

Hence each difference  of their value sets is infinite.

This example shows that two  distinct value sets can have the same  intersection with the set of prime numbers, such an intersection being infinite, without coinciding.
\end{example}

We remark that pairs of binary quadratic forms representing ``almost'' the same primes have been classified (see \cite{Voight}). By a case-by-case analysis one could reprove  Theorem \ref{T.lattices}, verifying that for each of these pairs  the correpsonding value sets have infinite symmetric difference.

\smallskip

Let us now show how the above Theorem \ref{T.lattices} fits in the frame of classifying common values of rational functions on surfaces. Let the two lattices $\Lambda, M$ appearing in Theorem \ref{T.lattices} be represented by two positive definite quadratic forms $q_1,q_2$. 

Consider the surface in $\P_3$ of equation 
$$
q_1(x,y)=q_2(z,w).
$$
It is an irreducible quadric surface, bi-ruled over $\R$ but not necessarily over $\Q$. It is endowed with two rational maps projecting $(x:y:z:w)\mapsto (x:y)\in\P_1$ and $(x:y:z:w)\mapsto (z:w)\in\P_1$; each map is undefined over a pair of (complex conjugate) points. Normalizing this map we obtain a smooth projective surface $\tilde{S}$, birational to the quadric, on which the projections to $(x:y)$ and to $(z:w)$ are well-defined. We then obtain two morphisms 
$$
\pi_1: \tilde{S}\to \P_{1} \qquad  \pi_2: \tilde{S}\to \P_{1},
$$
which provide the surface  $\tilde{S}$ with two distinct fibrations in conics.

The pairs of primitive vectors with equal length correspond to the rational points on $\tilde{S}$ which are integral with respect to the divisors lying above the four indeterminacy points of the two projections (which, viewed in $\tilde{S}$, become rational curves). Let $S$ be the open surface obtained by removing these curves. Then we are interested in the set $S(\Z)$ of integral points on such a quasi-projective surface.

Theorem \ref{T.lattices}  states that not both projections $S(\Z)\to \P_1(\Q)=\P_1(\Z)$ can be surjective, even up to finite sets, unless the two lattices are isometric. In that case, there are sections $\sigma_1:\P_1\to S$ of $\pi_1$ and   $\sigma_2:\P_2\to S$ of $\pi_1$ such that the morphisms $\pi_2\circ\sigma_1$, $\pi_1\circ \sigma_2$ are isomorphisms $\P_1\to\P_1$.

\medskip

Let us now show yet another `geometric' viewpoint on Theorem \ref{T.lattices}. Consider the quasi-projective variety $X:=\A^2\setminus\{0\}$, which can be viewed as a complement of a divisor in the projective surface obtained by blowin-up the point $(0:0:1)$ in $\P_2$: namely, removing from that surface the  line at infinity plus the exceptional divisor one obtains the quasi projective variety $X$. The set of integral points $X(\Z)$ consists exactely  of the set of primitive vectors of $\Z^2$. Every polynomial in two variables with integral coefficients provides a morphism $X\to \A^1$ over $\Z$. Theorem \ref{T.lattices} shows that the value sets of two quadratic forms $q_1,q_2$, viewed as maps $X(\Z)\to \A^1(\Z)$, have infinite symmetric difference unless they are related by an automorphism of $X$.
 Example \ref{Ex.same-primes} shows  however that such sets might contain exactly the same primes,  still having infinite symmetric difference.

\smallskip

\bigskip

\section*{APPENDIX 1 - By David Masser}

Here we give a variant of Theorem \ref{T.lower-bound-heights-1} above, with a different kind of lower bound, which is however effective.

\bigskip
\noindent
{\bf Theorem}. {\it Let $\Gamma$ be a finitely generated subgroup of ${\bf G}_{\rm m}^2(\overline{\bf Q})$ and let $R(u,v)$ be in $\overline{\bf Q}(u,v)$ not of the form $S(\phi(u,v))$ for any $S$ in $\overline{\bf Q}(z)$ and any group homomorphism $\phi$ from ${\bf G}_{\rm m}^2$ to ${\bf G}_{\rm m}$. Then there are effective constants $C > 0$ and $C_0$, and an effective finite set $Z$ of
translates of connected algebraic subgroups $H \neq {\bf G}_{\rm m}^2$ by elements of ${\bf G}_{\rm m}^2(\overline{\bf Q})$, all depending only on $\Gamma$ and $R$, such that $R(p)$ is well-defined and
$$h(R(p)) \geq C^{-1}h(p)-C_0$$
for all $p$ in $\Gamma$ outside $Z$.}
\bigskip

For a comparison write $R = f/g$ for $f,g$ coprime in $\overline{\bf Q}[u,v]$. Clearly there is effective
$c$ such that the (affine) height
$$h(f(p),g(p)) \leq c(h(p) + 1)$$
and so we get a similar lower bound for $h(R(p))$ in terms of $h(f(p),g(p))$ of the same shape
as in Theorem 3.1; but with a extra multiplying constant $C^{-1}c^{-1}$ which is effective.

Our condition $R \neq  S(\phi)$ seems more natural than the condition that $f,g$ do not both
vanish at $(0,0)$. For $R = S(\phi)$, at least for surjective $\phi$, reduces the problem to a single
variable, much more easily handled; and whether this happens can be easily determined.
For example, we may assume $\phi(u,v) = u^{-m}v^n$ for $m,n$ coprime, and then the existence of
$S$ is equivalent to the differential equation
\begin{equation}\label{de}
nu{\partial R \over \partial u} + mv{\partial R \over \partial v}=0. 
\end{equation}
Furthermore it is not hard to bound $m,n$ in terms of $R$; our proof below gives
$$\max\{|m|,|n|\} \leq \max\{\deg f,\deg g\}.$$
Thus we can deal with things like
$$R(u,v) = {u + v^2 \over u^2 + v},$$
where (\ref{de}) is ruled out thanks to
$${u\partial R/\partial u \over v\partial R/\partial v}=-{u(2uv^2 + u^2- v) \over v(2u^2v-u + v^2)}$$
being non-constant.

\noindent
{\it Proof}. As above, write $R = f/g$ for $f,g$ coprime in $\overline{\bf Q}[u,v]$, of course with $g \neq 0$. We assume until the end even
\begin{equation}\label{gp}
g(p) \neq  0;
\end{equation}
it will then be easy to lift this restriction. Write also $\rho= R(p) = f(p)/g(p)$. Then we have
\begin{equation}\label{qrp}
q_\rho(p) = 0
\end{equation}
for
$$q_w(u,v) = f(u,v)-wg(u,v).$$
Note that $q_\rho$ is not constant, else (\ref{qrp}) would imply $q_\rho= 0$ and so $R = \rho= S(\phi)$ which we have excluded.

There is an absolutely irreducible factor $q$ of $q_\rho$ with 
$$q(p) = 0,$$
and by well-known estimates we can assume that the naturally defined polynomial heights $h(q),h(q_\rho)$
satisfy
$$h(q) \leq ch(q_\rho) \leq c(h(\rho) + 1)$$
where from now on $c,c,...$ denote positive constants depending only on $\Gamma$ and $R$.

It is now a problem involving effective Mordell-Lang for ${\bf G}_{\rm m}^2$, in particular with respect
to the quantity $\rho$. Assume for the moment that $q$ does not have the form $\alpha u^m-\beta v^n$ or $\alpha u^mv^n-\beta$ for $\alpha,\beta$ in $\overline{\bf Q}$ and integers $m \geq 0, n \geq 0$. Then Theorem 2.1 of B\'erczes, Evertse, Gy\"ory, Pontreau \cite{BEGP} (p.72) gives 
$$h(p) \leq  c\max\{1,h(q)\} \leq c(h(\rho) + 1)$$
and we are done (the arguments of \cite{BEGP} p.79 appear also in Bombieri, Gubler \cite{BG} p.147 and even in Bilu \cite{Bilu} p.241 with a slightly different form).

What if $q = \alpha u^m-\beta v^n$ say? Then  $\alpha,\beta$ are non-zero and $m,n$ are coprime, at
most $\deg q \leq \max\{\deg f, \deg g\}$. In particular they are bounded independently of $\rho$. An
analogous independence property for $\alpha,\beta$ is not quite so clear. Anyway, we may suppose
$n \geq 1$, and then we have
$$q_\rho(t^n,\gamma t^m) = 0$$
identically in $t$ for any $\gamma$ with $\gamma^n=\alpha/\beta$. Writing
$$f(u,v) = \sum_{i,j}f_{ij}u^iv^j,    ~~~g(u,v) = \sum_{i,j}g_{ij}u^iv^j$$
we get
$$f_k(\gamma)=\rho g_k(\gamma)$$
for all $k$, where
$$f_k(z) = \sum_{ni+mj=k}f_{ij}z^j,    ~~~g_k(z) = \sum_{ni+mj=k}g_{ij}z^j$$
Eliminating $\rho$ we get
\begin{equation}\label{fg}
f_k(\gamma)g_l(\gamma)=f_l(\gamma)g_k(\gamma)
\end{equation}
for all $k,l$.

If (\ref{fg}) do not define a finite set of $\gamma$ then all $f_k(z)g_l(z) = f_l(z)g_k(z)$ identically. Now
not all $g_k(z)$ are identically zero, otherwise so would $g(t^n,zt^m) = \sum_k g_k(z)t^k$ be, and so
$g(u,v)$ itself. Thus all the $f_k(z)/g_k(z)$ with $g_k \neq 0$ are the same; call their common value
$x = S_0(z)$. So $f_k(z) = xg_k(z)$ for all $k$. It follows now that $q_x(t^n,zt^m) = 0$ identically in
$t,z$. This is the same as
$$R(t^n, zt^m) = S_0(z).$$
Choosing $t = \zeta u^{1/n}, z = vt^{-m}$ gives
$$R(u,v) = S_0(\zeta^{-m}(u^{-m}v^n)^{1/n})$$
for all $\zeta$ with $\zeta^n = 1$. Averaging over $\zeta$ we see that $R(u,v) = S(\phi(u,v))$ for $\phi(u,v) = u^{-m}v^n$, which we also excluded.

Thus (\ref{fg}) do indeed define a finite set of $\gamma$, clearly effective. So we get at most finitely
possibilities for $\alpha/\beta$ in $\alpha u^m-\beta v^n$. Therefore we only have to exclude $p$ on the cosets $(\alpha/\beta)u^m=v^n$.

The other $\alpha u^mv^n-\beta$ are similarly handled, and we get a first approximation to our finite set $Z$.

Finally the assumption (\ref{gp}) can be justified by applying the above arguments to $g$
instead of $f-\rho g$; this time it is clear that there are at most finitely many $(\alpha/\beta)u^m-v^n, (\alpha/\beta)u^mv^n-1$ and we end up with our desired $Z$. 

Our $C$, although effective, may well be rather large, due to the use of linear forms in
logarithms in \cite{BEGP} (p.76). For example we calculated
$$h\left({3^m-1 \over 2^n-1}\right) \geq 10^{-32}\max\{m,n\}$$
for all non-negative integers $m,n$ with $n \geq 1$.

Apparently effective Mordell-Lang for the square of an elliptic curve is even now still
not known; thus we cannot prove an analogue of the Theorem for $E^2(\overline{\bf Q})$.

\vglue 2cm

\pagebreak

\section*{APPENDIX 2}

Let $a>b>1$ be integers, and assume that $a$ is not a power of $b$.  We want to prove that, under certain special {\it effective} additional hypotheses, for an integer $n\ge 1$,  the ratio ${a^n-1\over b^n-1}$ cannot be integral. 
Of course we know from \cite{BCZ}  that this integrality can hold only for finitely many $n$ (once $a,b$ are given), but that result is not effective.  In the case $a=3,b=2$, {\sc Stoll}, using suitably Jacobi reciprocity, proved   that the ratio is integral only for $n=1$. 

His argument \footnote{personal communication}  goes as follows: since $2^2\equiv 1\pmod 3$, every such $n$ must be odd. Hence the order of $3$ modulo every prime divisor  $p$ of $2^n-1$ is odd, so $3$ is a quadratic residue modulo $p$ and the quadratic symbol $({3\over 2^n-1})=1$. By Jacobi reciprocity if $n>1$ this yields $({2^n-1\over 3})=-1$. However since $n$ is odd we have $2^n-1\equiv 1\pmod 3$, a contradiction.

This nice argument  
  generalizes to other pairs $a,b$; however they will have to  satisfy certain   conditions \footnote{For instance it suffices  that $b$ is even,  with $\gcd(b+1,a)>1$, that $a\equiv 3\pmod 4$, and  that $b^n-1$ is a quadratic residue modulo  $a$ for every odd $n$. Infinitely many examples are obtained on taking $a$ a prime $\equiv 3\pmod 8$ and setting $b=a-1$.} and moreover such a proof will never yield for instance information on the magnitude of the denominator of $(a^n-1)/(b^n-1)$. 

\medskip

Here we shall obtain an effective results under the assumption that $r:=a/b$ is sufficiently close to $1$.   More precisely, we shall prove the following

\begin{theorem} Let $a,b$ be positive integers, such that $1<b<a<b(1+2^{-55})$. Then, for any integer $n>1$,  the denominator $d$ of ${a^n-1\over b^n-1}$ satisfies $d\ge \exp( \kappa n)$, for $\kappa=(2^{48}\log({1\over \log (a/b)}))^{-1}$.
\end{theorem}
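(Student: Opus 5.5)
The plan is to exploit the fact that $r=a/b$ is very close to $1$, so that $a^n-1$ and $b^n-1$ are close in ratio for many $n$. Write $N=\gcd(a^n-1,b^n-1)$. Then the denominator of $(a^n-1)/(b^n-1)$ is
$$
d=\frac{b^n-1}{N},
$$
so it suffices to prove the upper bound $N\le (b^n-1)\exp(-\kappa n)$. The idea is to produce, for any common divisor, a small-integer combination that $N$ must divide. Take integers $u,v$ with $u\cdot b^n\equiv v \cdot a^n$ approximately. More precisely, $N$ divides any integer combination $x(a^n-1)-y(b^n-1)$, hence $N$ divides
$$
x a^n - y b^n -(x-y).
$$
I would choose $x,y$ so that this quantity is nonzero but small compared with $b^n$. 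Then $N\le |xa^n-yb^n|+|x-y|$ and $d\ge (b^n-1)/(\text{that bound})$.

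The choice of $x,y$ should come from Diophantine approximation of $r^n=(a/b)^n$ by rationals $y/x$, that is, continued fractions or Dirichlet's theorem. For any $Q$ there is $1\le x\le Q$ with $\|x r^n\|\le 1/Q$. With $y$ the nearest integer, this gives
$$
|xa^n-yb^n|\le b^n/Q .
$$
This alone is useless unless the combination is nonzero and $Q$ is large, so the key is to use the closeness of $r$ to $1$ effectively. For $n$ up to about $1/\log r$, the number $r^n=e^{n\log r}$ is close to $1$, and one can instead take explicit $x,y$ from a Padé-type approximation to $(1+t)^n$. Concretely, take $x,y$ to be the values at $t=r-1$ of Padé approximants to the binomial function, in the style of Thue, Baker and Bennett's effective irrationality measures for $(a/b)^{1/m}$ and for $\|(a/b)^n\|$ with $a/b$ near $1$. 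These give polynomials $P,Q_0\in\Z[t]$ of degree about $k$ with $P(t)(1+t)^n-Q_0(t)=O(t^{2k+1})$, with controlled coefficients and with nonvanishing of consecutive approximants. Clearing the denominators $b^k$ yields integers $x,y$ of size roughly $b^k\cdot 4^k$. It also yields $|xa^n-yb^n|\approx b^{n}\,(a-b)^{2k+1}b^{-k-1}\cdot 4^{k}$, which is much smaller than $b^n$ when $(a-b)/b<2^{-55}$. Optimizing $k\approx \delta n$ with $\delta$ tied to $1/\log\log(1/\log r)$-type quantities should give the gain $\exp(\kappa n)$ with $\kappa$ of the stated shape. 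The constants $2^{48}$ and $2^{-55}$ are absorbed from the crude coefficient bounds, roughly $4^k$ for hypergeometric Padé coefficients, and the condition ensures the gain beats the loss.

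The main obstacle, and the step I would treat most carefully, is the nonvanishing of $xa^n-yb^n-(x-y)$. The combination $xa^n-yb^n$ being small but nonzero is the standard Padé nonvanishing, via the determinant of consecutive approximants. But we need the full expression including $-(x-y)$ to be nonzero, and we need $|x-y|$ to be negligible relative to $b^n e^{-\kappa n}$. I would handle this by using two consecutive approximants, whose determinant is nonzero, so that at least one of them gives a nonzero value of the divisibility combination. The size of $x,y$ is $\ll b^k 4^k\ll b^n e^{-\kappa n}$ since $k\le \delta n$ with $\delta$ small. A separate issue is uniformity for $n$ large compared with $1/\log r$, when $r^n$ is no longer near $1$. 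There I would first reduce $n$ modulo a suitable block by writing $a^n=(a^m)^{n/m}$ only in the regime where it helps. Alternatively, and this is what I would try first, I would apply the Padé construction directly to $(1+t)^n$ with $k$ proportional to $n$ and check that the error term $t^{2k+1}$ still dominates because $t<2^{-55}$ is tiny. This is plausibly what forces the logarithmic dependence on $1/\log(a/b)$ in $\kappa$.
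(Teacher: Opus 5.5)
There is a genuine gap, and it is in the size estimate, not in the nonvanishing you single out (two consecutive Pad\'e approximants with nonzero determinant do handle nonvanishing). With $t=(a-b)/b$, $x=b^kP(t)$, $y=b^kQ_0(t)$, your own estimate reads
\[
|xa^n-yb^n|\approx b^n\,\frac{(a-b)^{2k+1}}{b^{k+1}}\cdot(\text{coefficients})
= b^n\, t\,\Bigl(\frac{(a-b)^2}{b}\Bigr)^{k}\cdot(\text{coefficients}).
\]
So each extra degree gains a factor $(a-b)^2/b$, not $(a-b)/b$. The hypothesis $a<b(1+2^{-55})$ does not make $(a-b)^2/b$ small.

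For example, take $b=2^{200}$ and $a=b+2^{140}$. Then $a/b=1+2^{-60}$, but $(a-b)^{2k+1}b^{-k-1}=2^{80k-60}$. So for every $k\ge 1$ your bound for $|xa^n-yb^n|$ exceeds $b^n$, and $N\mid x(a^n-1)-y(b^n-1)$ gives nothing beyond the trivial $N\le b^n-1$. The case $k=0$ only gives $N\le a^n-b^n$, i.e.\ $d$ at least about $1/(r^n-1)$. For $n=2^{58}$ that is about $3.5$, whereas the theorem asserts $d\ge e^{24}$.

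Moreover, for $k$ proportional to $n$ the Pad\'e data for $(1+t)^n$ grow like binomial coefficients in $n$, not like $4^k$. So this Thue/hypergeometric route can at best work when $(a-b)^2/b$ is below an absolute constant (e.g.\ $a=b+1$), which is the usual range of that method. It cannot reach the stated range. Neither the shape $\kappa\propto 1/\log(1/\log(a/b))$ nor the constants $2^{48}$, $2^{-55}$ come out of it; your optimization sketch does not supply them.

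The missing idea is Baker's method, which depends only on the smallness of $\log r$, $r=a/b$, and is uniform in $b$. The paper observes that $q=(a^n-1)/(b^n-1)$ satisfies $0<\log q-n\log r<b^{-n}$. It then applies Waldschmidt's explicit lower bound for linear forms in two logarithms to $n\log r-\log q$, using $h(r)=\log a$, $h(q)=\log(dq)$ and $C=1/\log r$; this $C$ is admissible because $\log q>n\log r$. Comparing the two bounds gives
\[
n\log b\le 2^{46}\log a\,\log(dq)\,\log(1/\log r).
\]
Since $\log a\le 2\log b$, $\log q<2n\log r$, and $\log r\,\log(1/\log r)<2^{-49}$, the $\log q$ contribution is negligible. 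Hence $\log d\ge n/(2^{48}\log(1/\log r))$, which is exactly the constant you were trying to reconstruct.
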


Note that $\log (a/b)\ge \log ((b+1)/b)>1/2b$, hence $\kappa> (2^{49}\log b)^{-1}$. Since anyway $\kappa>0$ we have at once the following

\begin{corollary} Let $a,b$ be positive integers, such that $1<b<a<b(1+2^{-55})$. Then, for any integer $n>0$,  $b^n-1$ cannot divide $a^n-1$.
\end{corollary}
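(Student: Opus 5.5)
The plan is to deduce the Corollary directly from the Theorem just stated, handling $n=1$ separately since the Theorem only covers $n>1$.

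For $n>1$, the Theorem says that the reduced denominator $d$ of $(a^n-1)/(b^n-1)$ satisfies $d\ge \exp(\kappa n)$. We must check that $\kappa>0$. Since $a>b$ are integers, $a/b\ge (b+1)/b$. The hypothesis $a<b(1+2^{-55})$ then forces $1/b<2^{-55}$, so $b>2^{55}$. Hence $0<\log(a/b)<\log(1+2^{-55})<1$, so $\log(1/\log(a/b))$ is positive and finite, and $\kappa=(2^{48}\log(1/\log(a/b)))^{-1}>0$. This is exactly the remark made before the Corollary, namely $\kappa>(2^{49}\log b)^{-1}>0$. Therefore $d\ge e^{\kappa n}>1$, so the fraction is not an integer, i.e.\ $b^n-1\nmid a^n-1$.

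For $n=1$ the Theorem is not available, so I would argue elementarily. We have $a-1=(b-1)+(a-b)$, so $b-1\mid a-1$ if and only if $b-1\mid a-b$. Now $a-b\ge 1$ because $a>b$. Also $a-b<2^{-55}b$, and $2^{-55}b<b-1$ because $b>2^{55}\ge 2$. Thus $0<a-b<b-1$, and $b-1$ cannot divide $a-b$. This settles $n=1$, and with the previous paragraph all $n>0$.

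The only real content is the Theorem itself, which I am taking as given. The only step needing care here is the bookkeeping: $b$ is forced to be large, which makes $\kappa$ well defined and positive and also gives $a-b<b-1$. I expect no genuine obstacle.
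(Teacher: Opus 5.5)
Your argument is correct and is the paper's own deduction: the Corollary follows from the Theorem once one checks $\kappa>0$ (using $b>2^{55}$, hence $0<\log(a/b)<1$), which gives $d\ge e^{\kappa n}>1$. Your separate elementary treatment of $n=1$, via $0<a-b<b-1$, is a worthwhile addition. The Theorem is stated only for $n>1$, and the paper passes over the case $n=1$ without comment.
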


\begin{proof} [Proof of theorem]



The proof becomes a straightforward verification on appealing to  a lower bound for linear forms in logarithms of algebraic numbers, obtained by a refinement of  the method of {\sc A. Baker},  as stated in the book by {\sc Y. Bugeaud},  \cite{Bug}.
More precisely, we consider a special case of a theorem of {\sc M. Waldschmidt } appearing as Thm. 2.1, pp. 9--10 of the said book, actually the case where the coefficients denoted $\beta_i$ therein are rational, and where $n=2$, $\beta_0=0$, $\beta_2=1$. With these conventions, what  we need can be stated as follows, where we let $e=\exp(1)=2.7182...$:

\medskip

\noindent{\bf Lower Bound for a linear form}: {\it Let $\alpha_1,\alpha_2\ge 2$ be multiplicatively independent   rational numbers, and let $A_1,A_2 $  be positive real numbers with $\log A_j\ge \max(h(\alpha_j), e|\log\alpha_j|,1)$. Also let $c$ be a nonzero integer and suppose that  $C\ge \max(e, {1\over \log A_1}+ {|c|\over \log A_2})$. Then we have
$$
\log |c\log\alpha_1-\log\alpha_2|\ge -2^{43}\log A_1\log A_2\log C.
$$}

Coming back to our issue,  assume that 
$1<b<a<(1+\delta)b$, where $\delta=2^{-55}$ 
Then, recalling that $r:=a/b$, we have  $0<\log r<\log (1+\delta)<\delta$. Also, we must have $b\ge 2^{55}$, and we may also assume that $n\ge 2$.

Then, let us  put $q:={a^n-1\over b^n-1}$  and let $d$ denote its denominator.

\medskip

Note that  $1<r^n<q=r^n+{a^n-b^n\over b^n(b^n-1)}$. Also, since $a<2b\le b^2$, we have  ${a^n-b^n\over b^n-1}<{a^n\over b^n}=r^n$, so 
 $0<q-r^n<(r/b)^n$, 
Since  $\log q=\log (r^n+(q-r^n))=n\log r+\log (1+{q-r^n\over r^n})$, this implies in particular that $0<\log q-n\log r<b^{-n}$, whence
\begin{equation}\label{E.lb}
\log |n\log r-\log q|<-n\log b.
\end{equation}

To obtain a converse inequality we want to apply the stated Lower Bound, with appropriate data.

In the above statement we set $\alpha_1=r$, $\alpha_2=q$. Note that $\alpha_1=r=a/b>1$ is rational but not integral (since $1<r<2$), 
whereas $\alpha_2=q>1$. 
If $\alpha_1,\alpha_2$ would be multiplicatively dependent we would then  have $(a/b)^l=q^m$ for some positive   integers  $l,m>0$. Letting $p$ be a prime dividing the denominator of $r$, this prime would divide $b$, so would not divide $b^n-1$, which gives a contradiction. Therefore  the assumption of multiplicative independence holds.

\medskip

We have $h(\alpha_1)=\log a$ (since $a>b>1$) and  $h(\alpha_2)=\log dq$.    Further, we set $c=n$ in the statement. 

Let us  also set $\log A_1=2\log a$, $\log A_2=4\log dq$, $C=1/\log r$; then the assumptions are satisfied.  In fact,  $2\log a\ge \max (\log a,e\log r,1)$, since $r<2$, whereas $a>b\ge 2^{55}$, which proves the requirement for $A_1$. The one for $A_2$  reduces to  $\log A_2\ge \max(\log dq, e\log q, 1)$, which also holds.
As to  $C$ we have to check that ${1\over \log r}\ge \max (3, {1\over 2\log a}+{n\over 4\log dq})$. Now, ${1\over 2\log a}<1$, whereas $q>r^n$, so ${n\over 4\log dq}<{1\over 4\log r}$, and then  it suffices   that $\log r<1/3$, which holds. 

\medskip

The Lower Bound  then yields $\log |n\log r-\log q|\ge -2^{43}(2\log a)\cdot (4\log q)\cdot \log(1/\log r)$, whence, in view of \eqref{E.lb}, we obtain 
\begin{equation*}
n\log b\le  2^{46}\log a\cdot \log dq \cdot \log {1\over \log r}. 
\end{equation*} 

Now, $n\log r<\log q<\log (r^n(1+b^{-n}))\le n\log r+b^{-n}<2n\log r$, since $\log r\ge \log {b+1\over b}\ge b^{-2}\ge b^{-n}$.   Then, taking into account that $\log a\le 2\log b$ the last displayed  inequality implies
\begin{equation*}
1\le  
2^{48}{1\over f({1\over \log r})}+n^{-1}\cdot 2^{47}\log d\cdot  \log {1\over \log r}, 
\end{equation*} 
where we have denoted $f(x):= x/\log x$.

Now,  we have  $f(2^{55})=2^{55}/(55\log 2)>2^{49}$. Then since   $f(x)$ is increasing for $x>3$, and since $1/\log r>1/\delta=2^{55}$, we have $f(1/\log r)> 2^{49}$, hence $1\le   n^{-1}\cdot 2^{48}\log d\cdot  \log {1\over \log r}$, i.e.,
\begin{equation*}
\log d\ge {1\over 2^{48}  \log {1\over \log r}}\cdot n,
\end{equation*} 
which amounts to our conclusion. 
\end{proof}

 



\bigskip

\end{document}